\documentclass{amsart}
\usepackage{graphicx}
\usepackage{amsmath}
\usepackage{amsthm}
\usepackage{amsfonts}
\usepackage{dsfont}
\usepackage{amssymb}
\usepackage{microtype}
\usepackage[dvipsnames]{xcolor}
\usepackage{verbatim}
\usepackage{enumitem}
\usepackage{xfrac}

\makeatletter
\newtheorem*{rep@theorem}{\rep@title}
\newcommand{\newreptheorem}[2]{%
    \newenvironment{rep#1}[1]{%
        \def\rep@title{#2 \ref{##1}}%
        \begin{rep@theorem}
    }%
    {\end{rep@theorem}}
}
\makeatother

\theoremstyle{plain}
\newtheorem{thm}{Theorem}[section]
\newreptheorem{thm}{Theorem}
\newtheorem{lemma}{Lemma}[section]
\newtheorem{cor}[thm]{Corollary}
\newreptheorem{cor}{Corollary}
\newtheorem*{cor*}{Corollary}

\newtheorem*{thm*}{Theorem}

\theoremstyle{remark}
\newtheorem{rmk}[lemma]{Remark}

\newtheorem*{claim*}{Claim}

\theoremstyle{definition}

\newtheorem*{prob}{Problem}
\newtheorem{defn}[lemma]{Definition}

\numberwithin{equation}{section}

\def\E{\mathbb{E}}
\def\N{\mathbb{N}}
\def\P{\mathbb{P}}
\def\R{\mathbb{R}}
\def\Z{\mathbb{Z}}

\def\O{\mathbb{O}}

\def\1{\mathds{1}}
\def\0{\mathbf{0}}

\def\cA{\mathcal{A}}

\def\cC{\mathcal{C}}

\def\cS{\mathcal{S}}

\def\cL{\mathcal{L}}

\def\cT{\mathcal{T}}
\def\cO{\mathcal{O}}

\def\cP{\mathcal{P}}
\def\cR{\mathcal{R}}
\def\cI{\mathcal{I}}

\def\MS{\mathcal{MS}}
\def\RS{\mathcal{RS}}

\def\bF{\mathbf{F}}

\DeclareMathOperator{\Var}{Var}
\DeclareMathOperator{\Cov}{Cov}
\DeclareMathOperator{\Sym}{Sym}
\DeclareMathOperator{\opint}{int}
\DeclareMathOperator{\ord}{ord}
\DeclareMathOperator{\diam}{diam}

\newcommand{\dquote}[1]{``{#1}''}

\newcommand{\tst}{\text{ s.t. }}
\newcommand{\tand}{\text{ and }}

\definecolor{darkerred}{RGB}{192,0,0}
\definecolor{darkerblue}{RGB}{0,0,160}
\definecolor{darkgreen}{RGB}{0,160,0}

\title{Shotgun Identification on Groups}

\author{Jacob Raymond}
\address{Jacob Raymond\\
Department of Mathematics and Statistics\\
University of North Carolina at Charlotte \\
9201 University City Blvd.\\
Charlotte, NC 28223}
\email{jraymon9@uncc.edu}

\author{Robert Bland}
\address{Robert Bland\\
Department of Mathematics and Statistics\\
University of North Carolina at Charlotte \\
9201 University City Blvd.\\
Charlotte, NC 28223}
\email{rbland5@uncc.edu}

\author{Kevin McGoff}
\address{Kevin McGoff\\
Department of Mathematics and Statistics\\
University of North Carolina at Charlotte \\
9201 University City Blvd.\\
Charlotte, NC 28223}
\email{kmcgoff1@uncc.edu}
\urladdr{https://clas-math.uncc.edu/kevin-mcgoff/}

\subjclass[2010]{Primary, 60C05; Secondary, 94A15}
\keywords{identifiability, shotgun identification}

\begin{document}

\maketitle

\begin{abstract}
    We consider the problem of shotgun identification of patterns on groups, which extends previous work on shotgun identification of DNA sequences and labeled graphs. A shotgun identification problem on a group $G$ is specified by two finite subsets $C \subset G$ and $K \subset G$ and a finite alphabet $\mathcal{A}$. In such problems, there is a ``global" pattern $w \in \mathcal{A}^{CK}$, and one would like to be able to identify this pattern (up to translation) based only on observation of the ``local" $K$-shaped subpatterns of $w$, called reads, centered at the elements of $C$. We consider an asymptotic regime in which the size of $w$ tends to infinity and the symbols of $w$ are drawn in an \textit{i.i.d.}~fashion. Our first general result establishes sufficient conditions under which the random pattern $w$ is identifiable from its reads with probability tending to one, and our second general result establishes sufficient conditions under which the random pattern $w$ is non-identifiable with probability tending to one. Additionally, we illustrate our main results by applying them to several families of examples.
\end{abstract}

\section{Introduction}  \label{Sect:Intro}

The basic task in any \dquote{shotgun} assembly or identification problem is to reconstruct some type of global object using only knowledge of its local structure. As a prototypical problem of this type, one may imagine trying to reconstruct a jigsaw puzzle without knowledge of the image it creates. The term \dquote{shotgun} comes from the problem of genome sequencing, in which one attempts to reconstruct an organism's genome by connecting relatively small fragments of the organism's DNA, called reads. Another example of this type of problem that has been treated in the literature is the reconstruction of a labeled graph from observations of the neighborhoods within the graph.

In this paper, we are interested in shotgun identification problems on groups. 
The group structure can be thought of as providing an orientation for each of the reads.
To describe these problems formally, we begin with a countable (finite or infinite) group $G$, a finite \textit{center set} $C \subset G$, and a finite \textit{read shape} $K \subset G$. 

Additionally, we let $\cA$ be a finite set of symbols, called the alphabet.
We suppose that there is a \textit{pattern} $w \in \cA^{CK}$, and in principle we would like to reconstruct $w$. However, we do not have access to the entire pattern. Instead, we only observe the $K$-shaped patterns $w|_{cK}$ centered at the elements $c$ in $C$.
More precisely, we let $\MS(E)$ denote the set of multi-sets of any set $E$, and we define the \textit{read operator} $\cR_{C,K}:\cA^{CK} \to \MS(\cA^K)$ by setting
\begin{equation*}
    \cR_{C,K}(w) = \bigl\{w|_{cK} : c \in C\bigr\} \subset \cA^K.
\end{equation*}
The elements in $\cR_{C,K}(w)$ are known as \textit{reads}. We remark that we view the reads as patterns on $K$, as opposed to patterns on $cK$, so that the location of these subpatterns within the original pattern $w$ is not known. For more details, see Section \ref{sect:background}. With these definitions, the goal of shotgun reconstruction or assembly is to reconstruct the pattern $w \in \cA^{CK}$ based only on knowledge of $G$, $C$, $K$, $\cA$, and $\cR_{C,K}(w)$. 

The idea of identifiability, defined formally below, is that it describes the situation when the reads $\cR_{C,K}(w)$ contain enough information to identify the original pattern $w$. However, first we must carefully define what we consider to be successful identification. In the group-theoretic setting considered here, we need to account for the presence of symmetries arising from group translations. Let us say that a pattern $w' \in \cA^{CK}$ is a \textit{$C$-preserving translate} of $w$ if there exists a group element $g \in G$ that preserves the center set (i.e., $gC = C$) and such that $w(h) = w'(gh)$ for all $h \in CK$. If $w'$ is such a translate of $w$, then $\cR_{C,K}(w) = \cR_{C,K}(w')$ (see Lemma \ref{id_char}), and therefore it is impossible to distinguish $w$ and $w'$ from their reads. Thus, we consider reconstruction to be successful if the reconstructed pattern is a $C$-preserving translate of the original pattern.

For a pattern $w \in \cA^{CK}$, it may happen that $\cR_{C,K}(w)$ provides enough information to determine $w$ (up to translation), and in other cases there may be (many) other patterns in $\cA^{CK}$ that are consistent with the observed reads. In the present setting, we consider $w$ to be $(C,K)$-\textit{identifiable} (or just identifiable) if $\cR_{C,K}(w)$ provides enough information to determine $w$ uniquely up to $C$-preserving translation. More precisely, we say that a pattern $w \in \cA^{CK}$ is $(C,K)$-identifiable if the following condition holds: for every pattern $w' \in \cA^{CK}$, if $\cR_{C,K}(w') = \cR_{C,K}(w)$, then there exists $g \in G$ such that $gC = C$ and $w(h) = w'(gh)$ for every $h \in CK$. We remark that there is a naive reconstruction algorithm (exhaustive search for a pattern that is consistent with the reads, with ties broken arbitrarily) such that for all identifiable $w$, the algorithm returns a $C$-preserving translate of $w$ when given $\cR_{C,K}(w)$ as input. Furthermore, there is \textit{no} deterministic reconstruction algorithm such that for all non-identifiable patterns $w$, the algorithm returns a $C$-preserving translate of $w$ when given $\cR_{C,K}(w)$ as input. 

In this work we seek to analyze the probability of a random pattern being identifiable. To state our main results, we require a few more definitions. First, let $\cI_{C,K}$ be the set of all $(C,K)$-identifiable patterns in $\cA^{CK}$, and then define a probability space on $\cA^{CK}$ with the discrete $\sigma$-algebra and the product measure $\mu_p = p^{CK}$ for some probability vector $p = (p_a)_{a \in \cA}$ on $\cA$. In order to avoid trivialities, we assume throughout that $p$ gives positive probability to at least two symbols in $\cA$. 

We can then consider the measure of all identifiable patterns $\mu_p(\cI_{C,K})$, i.e., the probability that a random pattern is identifiable. In fact, we consider the asymptotic regime in which the size of the problem tends to infinity. That is, suppose we are given sequences $\{G_n\}$, $\{C_n\}$, and $\{K_n\}$ such that $|C_n K_n| \to \infty$, as well as some fixed $\cA$ and $p$. Let $\{\mu^n_p\}$ be the sequence of probability measures on $\cA^{C_n K_n}$ defined by $\mu^n_p = p^{C_nK_n}$. Then we say that identifiability holds \textit{asymptotically almost surely} (a.a.s.) if
\begin{equation*}
    \lim_{n \to \infty} \mu^n_p\bigl(\cI_{C_n,K_n}\bigr) = 1,
\end{equation*}
and non-identifiability holds a.a.s. if 
\begin{equation*}
    \lim_{n \to \infty} \mu^n_p\bigl(\cI_{C_n,K_n}\bigr) = 0.
\end{equation*}

\subsection{Statement of main results}

Our main results address both identifiability and non-identifiability. In our first main result (Theorem \ref{pos:regime} below), we give sufficient conditions to have identifiability a.a.s. In our second main result (Theorem \ref{neg:regime} below), we give sufficient conditions to have non-identifiability a.a.s. In Section \ref{Sect:ExamplesIntro}, we also present applications of these results to some specific families of examples, including cases in which the read shapes are hypercubes in the group $\Z^d$ or balls (in the word metric) 
in finitely generated groups.

In order to state our general identifiability result, we require a few more definitions. Along with the countable group $G$ and finite subsets $C$ and $K$, we let $\bF \subset \cP(K)$ (with $\cP(K)$ being the power set of $K$), and we define the \textit{overlap graph} $\cO(G,C,K,\bF)$ as follows. The vertices of this graph are the elements of $C$, and two elements $c_1,c_2 \in C$ are connected by an edge if there exists $g \in G$ and $F \in \bF$ such that
\begin{equation*}
    gF \subset c_1K \cap c_2K.
\end{equation*}
Additionally, the second order R\'enyi entropy $H_2(p)$ is defined as
\begin{equation*}
    H_2(p) = -\ln\left(\sum_{a \in \cA} p_a^2\right).
\end{equation*}
We remark that as a result of our nontriviality assumption on $p$, we always have $H_2(p) > 0$. Additionally, we note that the choice of the natural logarithm here is arbitrary. In fact, Theorems \ref{pos:regime} and \ref{neg:regime} are independent of the choice of base, since their statements only ever refer to ratios of logarithms.

We consider the objects $\cA$, $p$, $\{G_n\}$, $\{C_n\}$, $\{K_n\}$, and $\{\mu_p^n\}$ to be defined as in the previous section. Since our primary goal is to consider the asymptotic regime in which the size of the pattern to be reconstructed tends to infinity, we require that
\begin{equation*} \tag{G1} \label{assump:CK_to_inf}
    \lim_{n \to \infty} |C_nK_n| = \infty.
\end{equation*}

For our general identifiability result, we also require the existence of a sequence of sets of overlap shapes $\{\bF_n\}$, with $\bF_n \subset \cP(K_n)$, satisfying the following assumptions. The first identifiability condition is
\begin{equation*} \tag{I1} \label{assump:overlap_conn}
    \forall n \gg 0, \quad \cO(G_n, C_n, K_n, \bF_n) \text{ is connected, }
\end{equation*}
where $\forall n \gg 0$ denotes \dquote{for all large enough $n$.} This condition ensures that the reads can be organized into a single connected component based on their $\bF_n$-overlaps. Our next identifiability condition is
\begin{equation*} \tag{I2} \label{assump:few_oshapes}
    \lim_{n \to \infty} \frac{\ln(|\bF_n|)}{\ln(|C_nK_n|)} = 0.
\end{equation*}
We remark $|\bF_n|$ denotes the number of sets in the collection $\bF_n$, i.e., the number of overlap shapes. Although the sequence of overlap shapes can increase in size to infinity, condition \eqref{assump:few_oshapes} guarantees that this growth is controlled. This assumption is discussed in further detail in Section \ref{sect:pos}. Our final identifiability condition is
\begin{equation*} \tag{I3} \label{assump:oshape_size_bound}
    \exists \epsilon > 0, \forall n \gg 0, \forall F \in \bF_n, \quad |F| \ge (1 + \epsilon)\frac{2}{H_2(p)}\ln\bigl(|C_nK_n|\bigr).
\end{equation*}
This condition (together with condition (\ref{assump:overlap_conn})) guarantees that there are sufficiently large overlapping regions between the reads. 
We may now state our main identifiability result, which we prove in Section \ref{sect:pos}.
\begin{thm} \label{pos:regime}
    Let $\cA$, $p$, $\{G_n\}$, $\{C_n\}$, $\{K_n\}$, and $\{\mu_p^n\}$ be as above, and suppose that \eqref{assump:CK_to_inf} is satisfied. If there exists a sequence $\{\bF_n\}$ such that $\bF_n \subset \cP(K_n)$ and conditions \eqref{assump:overlap_conn}, \eqref{assump:few_oshapes}, and \eqref{assump:oshape_size_bound} are satisfied, then identifiability holds a.a.s.
\end{thm}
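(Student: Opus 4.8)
The plan is to sandwich the identifiable set $\cI_{C_n,K_n}$ between a deterministically defined ``good event'' and then show that this good event has probability tending to $1$. Concretely, for each $n$ I would introduce the \emph{collision event} $\cB_n \subseteq \cA^{C_nK_n}$ consisting of those patterns $w$ for which there exist an overlap shape $F \in \bF_n$ and group elements $g_1 \neq g_2$ with $g_1F, g_2F \subseteq C_nK_n$ and $w(g_1 f) = w(g_2 f)$ for all $f \in F$; that is, $w$ carries the same $F$-pattern in two distinct locations. The argument then splits into two halves: a deterministic implication $\cB_n^c \subseteq \cI_{C_n,K_n}$ for all $n \gg 0$ (absence of collisions forces identifiability), and a probabilistic estimate $\mu_p^n(\cB_n) \to 0$. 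Since $\mu_p^n(\cI_{C_n,K_n}) \ge 1 - \mu_p^n(\cB_n)$, these two facts give the theorem.

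For the deterministic implication I would argue by rigidity of the read-matching. Suppose $w \in \cB_n^c$ and $w' \in \cA^{C_nK_n}$ satisfies $\cR_{C_n,K_n}(w') = \cR_{C_n,K_n}(w)$. Equality of the read multisets produces a bijection $\sigma : C_n \to C_n$ with $w(ck) = w'(\sigma(c)k)$ for all $c \in C_n$, $k \in K_n$; writing $\phi(c) = \sigma(c)c^{-1}$, the desired conclusion (a $C_n$-preserving translate, in the sense of Lemma \ref{id_char}) is exactly that $\phi$ is a constant $g \in G$, since then $\sigma(c) = gc$ forces $gC_n = C_n$ and $w(h) = w'(gh)$ on $C_nK_n$. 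Because $\cO(G_n,C_n,K_n,\bF_n)$ is connected by \eqref{assump:overlap_conn}, it suffices to show $\phi(c_1) = \phi(c_2)$ across each edge $(c_1,c_2)$. For such an edge there are $g_0$ and $F \in \bF_n$ with $g_0F \subseteq c_1K_n \cap c_2K_n$, and comparing the two expressions for $w'$ on this overlap yields a relation of the form $w(x) = w(ax)$ with $a = \phi(c_2)^{-1}\phi(c_1)$; when $\phi(c_1)\neq\phi(c_2)$ this should manifest as an $F$-collision in $w$, contradicting $w \in \cB_n^c$.

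I expect this rigidity step to be the main obstacle, precisely because of the group bookkeeping hidden in the word ``should.'' A priori a nontrivial misalignment $a$ may push the relevant translate of $F$ outside the read in which it must be read off, so the collision need not be visible at the chosen edge; the set on which $w(x)=w(ax)$ provably holds is $c_1K_n \cap a^{-1}c_2K_n$, and one must verify it still contains a translate of some shape in $\bF_n$. Overcoming this is where connectivity must be exploited more globally, propagating alignments along paths (or a spanning tree) of $\cO(G_n,C_n,K_n,\bF_n)$ so that a mismatch is forced to surface as a genuine collision inside $C_nK_n$; making this precise, together with the case analysis on how matched reads overlap, is the technical heart of the proof.

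The probabilistic estimate is comparatively routine and is where \eqref{assump:few_oshapes} and \eqref{assump:oshape_size_bound} enter. For a single pair of \emph{disjoint} translates $g_1F, g_2F$, the i.i.d.\ structure gives
\begin{equation*}
    \mu_p^n\bigl(w(g_1f)=w(g_2f)\ \forall f \in F\bigr) = \Bigl(\sum_{a\in\cA}p_a^2\Bigr)^{|F|} = e^{-|F|H_2(p)} \le |C_nK_n|^{-2(1+\epsilon)},
\end{equation*}
the last inequality by \eqref{assump:oshape_size_bound}. Fixing a reference point of $F$ shows there are at most $|C_nK_n|$ admissible choices for each $g_i$, hence at most $|C_nK_n|^2$ pairs, and at most $|\bF_n|$ shapes, so a union bound gives $\mu_p^n(\cB_n) \lesssim |\bF_n|\,|C_nK_n|^{-2\epsilon}$; by \eqref{assump:few_oshapes} we have $|\bF_n| = |C_nK_n|^{o(1)}$, so this tends to $0$. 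The one point needing extra care is the contribution of \emph{overlapping} translates $g_1F \cap g_2F \neq \emptyset$, where the agreement probability can exceed $e^{-|F|H_2(p)}$; these correspond to approximate periodicities of $w$, and I would bound their total probability separately, confirming that the disjoint pairs dominate and that the factor $2$ in \eqref{assump:oshape_size_bound} is exactly what the union bound over $\sim |C_nK_n|^2$ pairs demands.
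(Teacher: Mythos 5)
Your skeleton coincides with the paper's proof --- collision-freeness plus connectivity implies identifiability (the paper's Lemma \ref{pos:id_lem}), then a first-moment union bound on collisions (Lemma \ref{pos:id_bound}) --- but the two steps you defer are precisely the load-bearing ones, and your proposed workarounds do not close them. In the rigidity step you have correctly located the obstruction: from a source-side edge $(c_1,c_2)$ you only obtain $w(x)=w(ax)$ on $c_1K_n\cap a^{-1}c_2K_n$, which need not contain a translate of any $F\in\bF_n$. But ``propagating along paths or a spanning tree'' does not repair this, because the same defect recurs at every edge. The actual fix is a re-indexing, not a more global argument: since your matching $\sigma$ is a permutation of $C_n$, run the induction over edges between \emph{image} vertices $(\sigma(t),\sigma(c))\in E(\cO)$. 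For such an edge the overlap $hF\subset \sigma(t)K_n\cap\sigma(c)K_n$ is read off in $w'$, and the read-matching transports the single pattern $w'(hF)$ back to the two translates $\phi(t)^{-1}hF\subset tK_n$ and $\phi(c)^{-1}hF\subset cK_n$ --- both inside $C_nK_n$ --- so a misalignment produces a collision in $w$, which is the pattern your event $\cB_n$ actually controls, and collision-freeness forces $\phi(t)=\phi(c)$. With source-side edges the collision surfaces only in $w'$, about which you know nothing. This pull-back bookkeeping is exactly the Claim inside Lemma \ref{pos:id_lem}; connectivity enters only to chain image vertices along a path.

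In the probabilistic step, the intersecting pairs are not a routine remainder to be ``bounded separately''; they require two inputs you do not supply. First, the agreement probability for overlapping translates is not a product computation, since the constraints chain sites along orbits of left translation by $g_2g_1^{-1}$; the usable bound is $\pi_2(p)^{|F|/2}$ (Theorem \ref{prob:repeat_bounds}, which rests on the orbit analysis of Section \ref{sect:prob}). Thus the factor $2$ in \eqref{assump:oshape_size_bound} is consumed twice over: against the $|C_nK_n|^2$ count for disjoint pairs (as you note), and against this square-root loss for intersecting pairs, whose count is only $|C_nK_n|\,|FF^{-1}|$ by Lemma \ref{pos:trans_overlap}. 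Second, and more seriously, \eqref{assump:oshape_size_bound} gives no \emph{upper} bound on $|F|$, so $|FF^{-1}|$ --- and with it your union bound over intersecting pairs --- is a priori out of control. The paper handles this by replacing each oversized $F$ with a subset of size $\Theta(\ln|C_nK_n|)$, using Lemma \ref{pos:sub-overlap_connected} to check that passing to subsets preserves connectivity of the overlap graph; then $|FF^{-1}|\le|F|^2$ is polylogarithmic, the intersecting term is of order $|\bF_n|\ln(|C_nK_n|)^2|C_nK_n|^{-\epsilon}$, and \eqref{assump:few_oshapes} finishes. (One could instead exploit that $x\mapsto x^2\pi_2(p)^{x/2}$ is eventually decreasing, but some such device is mandatory.) Without the image-side indexing and without taming $|FF^{-1}|$, the proposal is a correct outline of the paper's argument with its two essential steps missing.
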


Informally, this theorem states that if the collection of read positions $\{cK:c \in C\}$ has sufficiently large and regular overlapping regions, then the probability of drawing an identifiable pattern is large (tending to one asymptotically).

We now turn our attention to the statement of our main non-identifiability result. The basis for this result is that if 
a pattern has a certain type of subpattern called a blocking configuration,
then it is impossible to determine the original pattern from its reads, i.e., the pattern is non-identifiable. In this work we focus on blocking configurations that arise from repeated shapes called \textit{shells}. A shell around an element $a \in CK$ is defined as the set $(C \cap aK^{-1})K \setminus \{a\}$. One should imagine this set as a generalization of a symmetric hypercube in $\Z^d$ with the center removed (hence the term \dquote{shell}). We also say the \textit{shell type} at $a$ is the set $a^{-1}C \cap K^{-1}$; see Section \ref{sect:neg} for a more detailed discussion of shells and shell types. For any $E \subset CK$, let $I_E$ be the collection of shell types that appear for elements $a \in E$, and note that $|I_E|$ is the number of shell types associated to $E$.

We consider the objects $\cA$, $p$, $\{G_n\}$, $\{C_n\}$, $\{K_n\}$, and $\{\mu_p^n\}$ as in the previous section. As before, we are interested in the asymptotic regime in which \eqref{assump:CK_to_inf} holds. For our general non-identifiability result, we require the existence of a sequence $\{B_n\}$, with $B_n \subset C_nK_n$, satisfying the following assumptions. The first non-identifiability condition is
\begin{equation*} \tag{N1} \label{assump:B_few_shells}
    \lim_{n \to \infty} \frac{\ln(|I_{B_n}|)}{\ln(|C_nK_n|)} = 0.
\end{equation*}
This condition asserts that the number of shell types represented in $B_n$ cannot grow too quickly. Our second non-identifiability condition is
\begin{equation*} \tag{N2} \label{assump:B_large}
    \lim_{n \to \infty} \frac{\ln(|B_n|)}{\ln(|C_nK_n|)} = 1.
\end{equation*}
This condition ensures that $B_n$ is sufficiently large. For our last condition, we require
\begin{equation*} \tag{N3} \label{assump:shell_size_bound}
    \exists \epsilon > 0, \forall n \gg 0, \quad |K_n^{-1}K_n| \le (1 - \epsilon)\frac{2}{H_2(p)}\ln\bigl(|C_nK_n|\bigr).
\end{equation*}
This condition guarantees that all shells (which must be contained in a translate of $K_n^{-1} K_n$) are sufficiently small.
The following theorem is our main non-identifiability result. Its proof appears in Section \ref{sect:neg}.
\begin{thm} \label{neg:regime}
    Let $\cA$, $p$, $\{G_n\}$, $\{C_n\}$, $\{K_n\}$, and $\{\mu_p^n\}$ be as above such that \eqref{assump:CK_to_inf} is satisfied. Suppose there exists a sequence $\{B_n\}$ with $B_n \subset C_nK_n$ such that \eqref{assump:B_few_shells} and \eqref{assump:B_large} are satisfied. Further, suppose \eqref{assump:shell_size_bound}. Then non-identifiability holds a.a.s.
\end{thm}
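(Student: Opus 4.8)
The plan is to exhibit, with probability tending to one, a pair of positions in $w$ forming a \emph{blocking configuration}, and then to invoke the principle (to be established in Section \ref{sect:neg}) that any pattern containing one is non-identifiable. Concretely, I would first settle the deterministic step. Call two positions $a,b \in C_nK_n$ a \emph{blocking pair} if they share a shell type $T = a^{-1}C_n \cap K_n^{-1} = b^{-1}C_n \cap K_n^{-1}$, if their shell patterns agree, meaning $w(ax)=w(bx)$ for all $x \in TK_n\setminus\{e\}$, if $w(a)\ne w(b)$, and if they lie far enough apart that no read covers both (equivalently $aK_n^{-1}\cap bK_n^{-1}=\emptyset$). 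Given such a pair, let $w'$ agree with $w$ except that $w'(a)=w(b)$ and $w'(b)=w(a)$. The map $c\mapsto ba^{-1}c$ carries the centers covering $a$ bijectively onto those covering $b$ (this uses $T=b^{-1}C_n\cap K_n^{-1}$), and the agreement of shell patterns shows that the entire multiset of reads is preserved, so $\cR_{C_n,K_n}(w')=\cR_{C_n,K_n}(w)$ while $w'\neq w$; verifying that $w'$ is not a $C_n$-preserving translate of $w$ then certifies non-identifiability.

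Next I would produce a blocking pair probabilistically, by a birthday-type second moment argument inside $B_n$. By \eqref{assump:B_few_shells} the positions of $B_n$ realize only $|I_{B_n}|=|C_nK_n|^{o(1)}$ shell types, so some single type $T$ is shared by at least $|B_n|/|I_{B_n}|$ of them, which by \eqref{assump:B_large} is $|C_nK_n|^{1-o(1)}$ positions. Every shell lies in a translate of $K_n^{-1}K_n$, so two positions can interact only when one lies in a fixed set of size $O(|K_n^{-1}K_n|^2)=|C_nK_n|^{o(1)}$ around the other; a greedy selection therefore keeps $M=|C_nK_n|^{1-o(1)}$ positions of type $T$ that are pairwise far apart, so that their shell patterns and center symbols are jointly independent. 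Writing $m_2=\sum_{x\in\cA}p_x^2$ (so $H_2(p)=-\ln m_2$) and $s=|TK_n|-1\le|K_n^{-1}K_n|$, the probability that a fixed pair is blocking is $m_2^{\,s}(1-m_2)$, and by \eqref{assump:shell_size_bound} this is at least a constant multiple of $|C_nK_n|^{-2(1-\epsilon)}$.

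Letting $X$ count the blocking pairs among the $M$ selected positions, the previous estimate gives $\E[X]\gtrsim M^2|C_nK_n|^{-2(1-\epsilon)}=|C_nK_n|^{2\epsilon-o(1)}\to\infty$. For the variance, pairs of pairs sharing no index contribute zero covariance by independence, so the only nontrivial terms come from two blocking pairs sharing one position, whose joint probability is controlled by $m_3^{\,s}$ with $m_3=\sum_{x\in\cA}p_x^3$. The decisive point is that $\|p\|_3\le\|p\|_2$, i.e.\ $m_3\le m_2^{3/2}$, which combined with the $\epsilon$-slack in \eqref{assump:shell_size_bound} yields $m_3^{\,s}=o(M\,m_2^{2s})$ and hence $\Var(X)=o(\E[X]^2)$. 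Chebyshev's inequality then gives $\P(X=0)\to 0$, so a blocking pair exists a.a.s., and by the first step $\mu_p^n(\cI_{C_n,K_n})\to 0$.

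I expect the main obstacle to be exactly this variance control: one must isolate a sub-collection of same-type positions whose shells are genuinely disjoint (so independence is exact) and then show the one-index-overlap covariances are negligible against $\E[X]^2$, which is where both the monotonicity of $\ell^q$-norms and the strict inequality in \eqref{assump:shell_size_bound} are essential. A secondary technical point is the deterministic verification that the swap preserves the full multiset of reads and yields a genuine non-translate.
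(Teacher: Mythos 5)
Your probabilistic core is sound and in fact mirrors the paper's argument: the paper's Lemma \ref{neg:id_prob} is exactly your second-moment computation (disjoint shells giving exact independence via Theorem \ref{prob:n_disj_repeat_prob}, covariance control through $\pi_3(p)\le\pi_2(p)^{3/2}$, Chebyshev), and its proof of the theorem performs the same greedy extraction of a pairwise-disjoint subfamily $D_n\subset B_n$ using $|(K_n^{-1}K_n)^2|=|C_nK_n|^{o(1)}$. Your pigeonhole reduction to a single most-common shell type is a mild simplification of the paper's treatment (which keeps all types and uses Cauchy--Schwarz across them), and both give $\E[X]\gtrsim |C_nK_n|^{2\epsilon-o(1)}$.

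The genuine gap is in your deterministic step, specifically the clause you defer as a ``secondary technical point'': verifying that the swapped pattern $w'=\varphi_{a,b}(w)$ is not a $C_n$-preserving translate of $w$. This is not a deterministic verification that can be carried out for every pattern containing a blocking pair, because it is simply false that every such pattern is non-identifiable: it can happen that $w=\sigma^g\bigl(\varphi_{a,b}(w)\bigr)$ for some nontrivial $g$ in the stabilizer $G_{C_n}$, in which case $w'\in\cC(w)$ and the blocking pair certifies nothing. Whether this occurs depends on the random labels of $w$ itself, not only on the combinatorics of $a$, $b$, and $C_n$, so it cannot be excluded by choosing the positions in your greedy selection more carefully; it must be handled probabilistically. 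This is exactly what the paper's Section \ref{Sect:ExceptionalPatterns} does: Lemma \ref{neg:stab_ord} shows $|G_{C_n}|\le|C_n|$, Lemma \ref{neg:part_orbit_lem} shows that any such exceptional coincidence forces $b=g^ma$, constant labels along two $\langle g\rangle$-orbits, and a global repeat of $C_nK_n\setminus\langle g\rangle a$ under $g$, and Lemma \ref{neg:insct_bound} converts this into the bound
\begin{equation*}
    \mu_p\bigl(\RS_{C_n,K_n}\cap\cI_{C_n,K_n}\bigr)\le |C_nK_n|^3\,\pi_2(p)^{\frac{|C_nK_n|}{2}-1},
\end{equation*}
which tends to zero and is then subtracted off at the end of the proof. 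Without this ingredient your argument establishes only that a.a.s.\ there exists $w'\ne w$ with $\cR_{C_n,K_n}(w')=\cR_{C_n,K_n}(w)$, which under the paper's definition of identifiability (equality up to $C_n$-preserving translation) does not imply non-identifiability. To complete your proof you need to either reproduce an estimate of the type of Lemma \ref{neg:insct_bound}, or prove a structural lemma in the spirit of Lemma \ref{neg:part_orbit_lem} showing exceptional pairs must satisfy $b\in G_{C_n}a$ and then argue such pairs are negligible in your count --- the latter is delicate since $|G_{C_n}|$ can be as large as $|C_n|$.
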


This result states that if the read shape is small enough and there is a large enough set of positions in $C_nK_n$ with relatively few associated shell types, then the probability of drawing an identifiable pattern is low (tending to zero asymptotically). 

\begin{rmk}
    The appearance of conditions \eqref{assump:oshape_size_bound} and \eqref{assump:shell_size_bound} in our results suggests that there may be some critical phenomenon that arises for shotgun identification problems on groups. In particular, consider the constant
    \begin{equation*}
         \lambda_c = \frac{2}{H_2(p)}.
    \end{equation*}
    Then condition \eqref{assump:oshape_size_bound} ensures that $|K_n|/ \ln(|C_n K_n|) > \lambda_c$, whereas condition \eqref{assump:shell_size_bound} guarantees that $|K_n|/ \ln(|C_n K_n|) < \lambda_c$. Thus, our results suggest that (under appropriate regularity conditions) $\lambda_c$ might serve as a kind of critical value for the ratio $|K_n|/ \ln( |C_n K_n|)$. In other words, our results suggest the heuristic that if the ratio $|K_n|/ \ln( |C_n K_n|)$ is less than $\lambda_c$, then non-identifiability occurs with high probability, and if the ratio is greater than $\lambda_c$, then identifiability occurs with high probability. This phenomenon has been observed previously for $G = \Z$ \cite{motahari} and conjectured for $G  = \Z^d$ \cite{mossel}. See Sections \ref{Sect:LatticesBasic} and \ref{sect:conc} for additional discussion. 
\end{rmk}

\subsection{Examples} \label{Sect:ExamplesIntro}

Here we describe several applications of our main results to illustrate how they may be used in various settings. The proofs of these statements appear in Section \ref{sect:ex}.

\subsubsection{Hypercubes in $\Z^d$ with sparse center sets} \label{sect:ex1}

In our first example, we consider the $d$-dimensional integer lattice $\Z^d$. When $d = 1$, one may imagine performing reconstruction on a string of characters, as is the case for DNA sequencing. When $d = 2$, one may imagine trying to reconstruct a pattern on a grid of points in the plane, as might be the case with images. We present our results for this example with arbitrary $d \in \N$. We let $\{G_n\}$ be defined as $G_n = (\Z^d, +)$. We then consider three sequences of natural numbers $\{m_n\}$,$\{r_n\}$, and $\{\ell_n\}$, where $\{m_n\}$ tends to infinity and $\{\ell_n/r_n\}$ tends to zero. The sequence of center sets $\{C_n\}$ is then defined by setting $C_n = \ell_n \cdot [0, m_n]^d \subset \Z^d$, i.e., each $C_n$ is a regular grid of $m_n^d$ points spaced $\ell_n$ units apart. 

With $\{K_n\}$ defined as $K_n = [0,r_n-1]^d \subset \Z^d$ (so that the reads are hypercubes in $\Z^d$ of side length $r_n$), we let $R_n = m_n\ell_n + r_n$, which results in $C_n K_n$ being a hypercube of side length $R_n$. In this context, we are mainly interested in the question, how large must the reads be (parameterized by $r_n$) in terms of the observable region (parameterized by $R_n$) in order to have identifiability with high probability? The following corollary of Theorems \ref{pos:regime} and \ref{neg:regime} gives our identifiability and non-identifiability results for this example. 
\begin{cor} \label{ex1}
    If there exists some $\epsilon > 0$ such that for all large $n$,
    \begin{equation*}
        r_n^d \ge (1 + \epsilon) \frac{2d}{H_2(p)}\ln(R_n),
    \end{equation*}
    then identifiability occurs a.a.s. On the other hand, if there exists some $\epsilon > 0$ such that for all large $n$,
    \begin{equation*}
        r_n^d \le (1 - \epsilon) \frac{d}{2^{d-1}H_2(p)}\ln(R_n),
    \end{equation*}
    then non-identifiability occurs a.a.s.
\end{cor}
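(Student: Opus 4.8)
The plan is to derive Corollary~\ref{ex1} by verifying the hypotheses of Theorems~\ref{pos:regime} and \ref{neg:regime} for the specific sequences $G_n = \Z^d$, $C_n = \ell_n\cdot[0,m_n]^d$, and $K_n = [0,r_n-1]^d$. The first order of business is to record the basic size estimates. Since $C_nK_n$ is a hypercube of side length $R_n = m_n\ell_n + r_n$, we have $|C_nK_n| = R_n^d$, so $\ln(|C_nK_n|) = d\ln(R_n)$. Similarly $|K_n| = r_n^d$ and $|K_n^{-1}K_n| = |[-(r_n-1),r_n-1]^d| = (2r_n-1)^d$. Assumption \eqref{assump:CK_to_inf} follows from $m_n\to\infty$, which forces $R_n\to\infty$.

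For the identifiability half, I would produce a single natural choice of overlap shapes $\bF_n$ and check \eqref{assump:overlap_conn}, \eqref{assump:few_oshapes}, \eqref{assump:oshape_size_bound}. The cleanest candidate is to take $\bF_n$ to consist of the ``large sub-boxes'' of $K_n$ that capture the overlap of reads centered at lattice-adjacent points of $C_n$: two adjacent centers differ by $\ell_n$ in one coordinate, so $c_1K_n\cap c_2K_n$ is a box of dimensions $(r_n-\ell_n)\times r_n^{d-1}$ (using $\ell_n < r_n$, which holds eventually because $\ell_n/r_n\to 0$). One takes $\bF_n$ to be (translates into $K_n$ of) these overlap boxes, one shape for each of the $d$ coordinate directions, so that $|\bF_n|$ is bounded by a constant (in fact $|\bF_n|=O(1)$), giving \eqref{assump:few_oshapes} immediately since $\ln(|\bF_n|)/\ln(|C_nK_n|)\to 0$. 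Connectivity \eqref{assump:overlap_conn} follows because adjacent grid centers overlap in one of these shapes and the grid $[0,m_n]^d$ is connected under adjacency. For \eqref{assump:oshape_size_bound} each $F\in\bF_n$ has $|F| = (r_n-\ell_n)r_n^{d-1} = r_n^d(1-\ell_n/r_n)$; since $\ell_n/r_n\to 0$, the hypothesis $r_n^d\ge (1+\epsilon)\frac{2d}{H_2(p)}\ln(R_n) = (1+\epsilon)\frac{2}{H_2(p)}\ln(|C_nK_n|)$ passes to $|F|\ge (1+\epsilon')\frac{2}{H_2(p)}\ln(|C_nK_n|)$ for a slightly smaller $\epsilon'>0$ and all large $n$, as required.

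For the non-identifiability half, I would exhibit a suitable sequence $\{B_n\}$ and verify \eqref{assump:B_few_shells}, \eqref{assump:B_large}, \eqref{assump:shell_size_bound}. The natural choice is to take $B_n$ to be a large ``interior'' portion of $C_nK_n$ consisting of elements $a$ whose shell type $a^{-1}C_n\cap K_n^{-1}$ is the full symmetric type (i.e., interior points where the read structure looks translation-homogeneous); by the regularity of the grid these interior points all share a bounded number of shell types, giving $|I_{B_n}| = O(1)$ and hence \eqref{assump:B_few_shells}. Choosing $B_n$ to contain a positive fraction, or at least a polynomially-large fraction, of $C_nK_n$ yields $\ln(|B_n|)/\ln(|C_nK_n|)\to 1$, which is \eqref{assump:B_large}. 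Finally \eqref{assump:shell_size_bound} is where the factor $d/2^{d-1}$ in the statement comes from: the condition $|K_n^{-1}K_n| = (2r_n-1)^d \le (1-\epsilon)\frac{2}{H_2(p)}\ln(|C_nK_n|) = (1-\epsilon)\frac{2d}{H_2(p)}\ln(R_n)$ must be deduced from the hypothesis $r_n^d\le(1-\epsilon)\frac{d}{2^{d-1}H_2(p)}\ln(R_n)$. Since $(2r_n-1)^d \le (2r_n)^d = 2^d r_n^d$, multiplying the hypothesis through by $2^d$ gives $2^d r_n^d \le (1-\epsilon)\frac{2d}{H_2(p)}\ln(R_n)$, exactly matching the bound on $|K_n^{-1}K_n|$ (after absorbing the small discrepancy between $(2r_n-1)^d$ and $(2r_n)^d$ into a slightly adjusted $\epsilon$).

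I expect the main obstacle to lie in the non-identifiability half, specifically in the simultaneous satisfaction of \eqref{assump:B_few_shells} and \eqref{assump:B_large}. Making $B_n$ large enough for \eqref{assump:B_large} pushes toward including boundary elements of $C_nK_n$, whose shell types vary and could inflate $|I_{B_n}|$; conversely restricting to the homogeneous interior keeps $|I_{B_n}|$ bounded but risks shrinking $B_n$. The delicate point is to verify that the interior region (where shell types are controlled) still constitutes a large enough fraction of $C_nK_n$ that $\ln|B_n|/\ln|C_nK_n|\to 1$; this hinges on a careful count of how many lattice positions $a\in C_nK_n$ have a fixed shell type, using the product structure of the grid and the fact that $\ell_n/r_n\to 0$ (which controls the relative thickness of the boundary layer). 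Once this counting is done, the remaining verifications are the routine size manipulations sketched above, and the two halves of the corollary follow directly from the two main theorems.
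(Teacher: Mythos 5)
Your identifiability half is essentially the paper's own argument (the same $d$ overlap prisms of size $r_n^{d-1}(r_n-\ell_n)$, the same grid-connectivity and size verifications), and it is correct. The genuine gap is in the non-identifiability half, in your verification of \eqref{assump:B_few_shells}. You claim that the interior points of $C_nK_n$ ``all share a bounded number of shell types, giving $|I_{B_n}| = O(1)$.'' This is false whenever $\ell_n \to \infty$, which the hypotheses allow (only $\ell_n/r_n \to 0$ is assumed). The shell type at $a$ is $a^{-1}C_n \cap K_n^{-1}$, and for an interior point this set is determined by the residue of $a$ modulo $\ell_n$ in each coordinate: two interior points with different offsets relative to the sparse grid $\ell_n \cdot [0,m_n]^d$ see the nearby centers in different relative positions inside $K_n^{-1}$, hence have different shell types. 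So even the deep interior of $C_nK_n$ carries roughly $\ell_n^d$ distinct shell types, a quantity that tends to infinity with $\ell_n$; your $O(1)$ claim holds only when $\ell_n = O(1)$ (e.g., the case $\ell_n = 1$ of the earlier literature). Relatedly, your diagnosis that the danger comes from boundary points inflating $|I_{B_n}|$ misplaces the difficulty: the boundary is disposed of trivially by excising a layer of width $r_n$ (costing nothing in \eqref{assump:B_large} since $r_n/R_n \to 0$), while the real counting issue lives in the interior.

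The gap is repairable, and the repair is precisely the count you deferred. Take $B_n = [r_n-1, R_n-r_n]^d$, so that every $b \in B_n$ has its maximal shell inside $C_nK_n$ and the shell type of $b$ depends only on its offset in $[0,\ell_n-1]^d$ relative to the grid; this gives $|I_{B_n}| \le \ell_n^d$, not $O(1)$. Then use the non-identifiability hypothesis itself: it forces $r_n^d \le L_0 \ln(R_n)$ for a constant $L_0$, and since $\ell_n < r_n$ for large $n$,
\begin{equation*}
    \ln\bigl(|I_{B_n}|\bigr) \le \ln\bigl(r_n^d\bigr) \le \ln\bigl(L_0\ln(R_n)\bigr) = \ln(L_0) + \ln\bigl(\ln(R_n)\bigr) = o\bigl(\ln(R_n)\bigr) = o\bigl(\ln(|C_nK_n|)\bigr),
\end{equation*}
which is exactly \eqref{assump:B_few_shells}. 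This is the step the paper carries out and your proposal omits; without it (or with the incorrect $O(1)$ claim in its place) the argument does not cover the full range of sequences $\{\ell_n\}$ permitted by the corollary. Your verifications of \eqref{assump:B_large} and \eqref{assump:shell_size_bound} match the paper and are fine.
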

In the special case where $\ell_n = 1$, this corollary recovers existing results from previous work \cite{mossel,motahari}. The case $\ell_n >1$ may be viewed as allowing some sparsity in the observations. See Section \ref{Sect:LatticesBasic} for additional discussion.

\subsubsection{General read shapes in $\Z^d$} \label{sect:ex2}

Next we consider general read shapes in $\Z^d$. We take the center set $C_n$ to be the hypercube $[0,n-1]^d \subset \Z^d$, and we seek conditions on the sequence of read shapes $\{K_n\}$ that guarantee identifiability or non-identifiability a.a.s.

In order to state our identifiability result, we first define the \textit{$1$-interior} of $F \subset G$, denoted by $\opint_1 F$, by setting
\begin{equation*}
    \opint_1 F = \bigcap_u \bigl(F \cap u F \cap u^{-1} F \bigr),
\end{equation*}
where the intersection ranges over the standard basis vectors $u$ in $\Z^d$. For our non-identifiability result, we define the \textit{diameter} of $F \subset G$, denoted by $\diam(F)$, by setting
\begin{equation*}
    \diam(F) = \sup \bigl\{ \| a^{-1}b \|_{\infty} : a,b \in F \bigr\},
\end{equation*}
where $\| \cdot \|_{\infty}$ denotes the standard $\ell^\infty$ norm on $\R^d$. We now state our results for this example.
\begin{cor} \label{ex2}
    If 
    \begin{equation*}
       \lim_{n \to \infty} \frac{|\opint_1K_n|}{|K_n|} = 1,
    \end{equation*}
    and for all large $n$,
    \begin{equation*}
        |K_n| \ge \frac{4d}{H_2(p)}\ln(n),
    \end{equation*}
    then identifiability occurs a.a.s. On the other hand, if
    \begin{equation*}
        \lim_{n \to \infty} \frac{\diam(K_n)}{n} = 0,
    \end{equation*}
    and for some $\epsilon > 0$ and all large $n$,
    \begin{equation*}
        |K_n^{-1}K_n| \le (1 - \epsilon) \frac{2d}{H_2(p)} \ln(n),
    \end{equation*}
    then non-identifiability occurs a.a.s.
\end{cor}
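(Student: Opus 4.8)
The plan is to obtain each half of the corollary as a direct application of the corresponding general theorem. In both directions condition \eqref{assump:CK_to_inf} is immediate: for any fixed $k \in K_n$ we have $C_n + k \subseteq C_n K_n$, so $|C_n K_n| \ge |C_n| = n^d \to \infty$. It is convenient to record two elementary box counts. Writing $D_n = \diam(K_n)$, the set $C_n K_n$ sits in a box of side at most $n + D_n$, so $n^d \le |C_n K_n| \le (n + D_n)^d$; and regardless of the diameter the crude bound $|C_n K_n| \le |C_n|\,|K_n| = n^d |K_n|$ holds, giving $\ln|C_n K_n| \le d\ln n + \ln|K_n|$.

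For the identifiability direction I would apply Theorem \ref{pos:regime} with the single overlap shape $\bF_n = \{\opint_1 K_n\}$. The key geometric point is that for each standard basis vector $u$ the definition of $\opint_1$ gives $\opint_1 K_n \subseteq K_n \cap (u + K_n) \cap (-u + K_n)$; hence for unit-adjacent centers $c_2 = c_1 \pm u$ one has $c_1 + \opint_1 K_n \subseteq (c_1 + K_n) \cap (c_2 + K_n)$, so $c_1$ and $c_2$ are joined in $\cO(G_n, C_n, K_n, \bF_n)$. Since $C_n = [0,n-1]^d$ is connected under unit steps, this yields \eqref{assump:overlap_conn}, and \eqref{assump:few_oshapes} is trivial because $|\bF_n| = 1$. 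For \eqref{assump:oshape_size_bound} I would combine the hypothesis $|\opint_1 K_n| \ge (1 - o(1))|K_n|$ (from the assumed limit) and $|K_n| \ge \frac{4d}{H_2(p)}\ln n$ with $\ln|C_n K_n| \le d\ln n + \ln|K_n|$: the first term contributes at most $\tfrac12|K_n|$ and the second is $o(|K_n|)$, so $|\opint_1 K_n|$ exceeds $(1+\epsilon)\frac{2}{H_2(p)}\ln|C_n K_n|$ for a fixed small $\epsilon$ and all large $n$.

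For non-identifiability I would apply Theorem \ref{neg:regime} with the deep-interior set $B_n = \{a \in C_n K_n : a - K_n \subseteq C_n\}$. Every $a \in B_n$ has shell type $(C_n - a) \cap (-K_n) = -K_n = K_n^{-1}$, since $a - K_n \subseteq C_n$ forces $-K_n \subseteq C_n - a$; thus $|I_{B_n}| = 1$ and \eqref{assump:B_few_shells} holds. A coordinatewise count shows $a \in B_n$ exactly when each $a_i$ lies in an interval of length $n - (\max_k k_i - \min_k k_i) \ge n - D_n$, so $(n - D_n)^d \le |B_n| \le |C_n K_n| \le (n + D_n)^d$; since $D_n/n \to 0$, taking logarithms and using $d\ln n \le \ln|C_n K_n|$ gives $\ln|B_n|/\ln|C_n K_n| \to 1$, which is \eqref{assump:B_large}. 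Finally \eqref{assump:shell_size_bound} follows with the same $\epsilon$, because $|K_n^{-1}K_n| \le (1-\epsilon)\frac{2d}{H_2(p)}\ln n \le (1-\epsilon)\frac{2}{H_2(p)}\ln|C_n K_n|$, again using $d\ln n \le \ln|C_n K_n|$.

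I expect the only delicate point to be \eqref{assump:oshape_size_bound} in the identifiability half. When $|K_n|$ grows faster than any power of $n$, the correction $\ln|K_n|$ in the bound on $\ln|C_n K_n|$ is not negligible against $\ln n$; the estimate survives only because it is weighed against $|\opint_1 K_n| \asymp |K_n|$ rather than against $\ln|K_n|$, so the factor $4$ in the hypothesis leaves room for both the $(1+\epsilon)$ slack and the $\ln|K_n|$ term. The remaining steps are the connectivity claim (a one-line consequence of the definition of $\opint_1$) and the two box counts above.
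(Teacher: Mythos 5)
Your proposal is correct and follows essentially the same route as the paper: for identifiability you use the identical single overlap shape $\bF_n = \{\opint_1 K_n\}$, the same unit-step connectivity argument for \eqref{assump:overlap_conn}, and the same absorption of $\ln|C_nK_n| \le d\ln n + \ln|K_n|$ into $|K_n|$ (exploiting the factor $4$ to leave room for the $(1+\epsilon)$ slack and the $o(|K_n|)$ term), exactly as in the paper's verification of \eqref{assump:oshape_size_bound}. The only difference is cosmetic: in the non-identifiability half you take $B_n$ to be the full deep-interior set $\{a : a - K_n \subseteq C_n\}$ rather than the paper's explicit box $k_0 + [\diam(K_n),\, n-1-\diam(K_n)]^d$ contained in it, but both sets consist of points whose shell is the maximal shell (a single shell type $K_n^{-1}$) and both satisfy the same bounds $(n-\diam(K_n))^d \le |B_n| \le (n+\diam(K_n))^d$, so the verification of \eqref{assump:B_few_shells}, \eqref{assump:B_large}, and \eqref{assump:shell_size_bound} is unchanged.
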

This corollary gives that if the read shape is sufficiently large and has relatively large 1-interior, then identifiability occurs with high probability. On the other hand, if the read shape is sufficiently small and not too spread out 
(in the sense that its diameter is small enough), then non-identifiability occurs with high probability.

\subsubsection{Finitely generated groups} \label{sect:ex3}

In our last family of examples, we consider the setting of finitely generated groups. Let $T$ be a finite symmetric set of generators of $G$, and define $T_i$ to be the closed ball of radius $i$ around the identity with respect to the standard word metric defined by $T$, i.e., $T_i$ is the set of elements $g \in G$ such that $g = t_1 \dots t_j$ for some $j \le i$ and $t_1,\dots,t_j \in T$. Let $\gamma(n) = |T_n|$ be the growth rate function of the group $G$ with generating set $T$, and consider sequences of natural numbers $\{R_n\}$ and $\{r_n\}$ with $R_n > r_n$. We take the center set $C_n$ to be  $T_{R_n - r_n}$, and we take the read shape $K_n$ to be $T_{r_n}$, which makes the observable region $C_nK_n = T_{R_n}$. In this setting, we would like to understand how the read radius $r_n$ and the observable radius $R_n$ affect the probability of identifiability. Interestingly, the answer depends on the growth rate function of the group.

\begin{cor} \label{ex3}
    If there exists some $\epsilon > 0$ such that for all large $n$,
    \begin{equation*}
        \gamma(r_n-1) \ge (1 + \epsilon) \frac{2}{H_2(p)}\ln(\gamma(R_n)),
    \end{equation*}
    then identifiability occurs a.a.s. On the other hand, if there exists some $\epsilon > 0$ such that for all large $n$,
    \begin{equation*}
        \gamma(2r_n) \le (1-\epsilon) \frac{2}{H_2(p)}\ln(\gamma(R_n)),
    \end{equation*}
    then non-identifiability occurs a.a.s.
\end{cor}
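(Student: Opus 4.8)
The plan is to derive both halves of the corollary by verifying the hypotheses of Theorems \ref{pos:regime} and \ref{neg:regime} for suitable auxiliary sequences. Throughout I would use three elementary facts about the word metric: since $T$ is symmetric, each ball $T_i$ is symmetric; the balls satisfy $T_a T_b = T_{a+b}$, so that $C_n K_n = T_{R_n - r_n}T_{r_n} = T_{R_n}$ and hence $|C_n K_n| = \gamma(R_n)$; and the growth function is submultiplicative, $\gamma(a+b) \le \gamma(a)\gamma(b)$. I work within the standing asymptotic regime, so that (G1) reads $\gamma(R_n) \to \infty$.

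For the identifiability half, I would take the single overlap shape $\bF_n = \{T_{r_n - 1}\}$, which is well defined for large $n$ since the hypothesis forces $\gamma(r_n-1) \to \infty$ and hence $r_n \to \infty$. Then $|\bF_n| = 1$ makes \eqref{assump:few_oshapes} immediate, and \eqref{assump:oshape_size_bound} is precisely the assumed bound $\gamma(r_n-1) = |T_{r_n-1}| \ge (1+\epsilon)\tfrac{2}{H_2(p)}\ln(\gamma(R_n))$. The content is \eqref{assump:overlap_conn}: I claim that whenever $c_1 \in C_n$ and $s \in T$ with $c_2 = c_1 s \in C_n$, the vertices $c_1,c_2$ are adjacent in $\cO(G_n,C_n,K_n,\bF_n)$, witnessed by $g = c_1$ and $F = T_{r_n-1}$. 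Indeed $c_1 T_{r_n-1} \subseteq c_1 T_{r_n}$ trivially, while $c_1 T_{r_n-1} \subseteq c_1 s\,T_{r_n}$ because $s^{-1}T_{r_n-1} \subseteq T_1 T_{r_n-1} = T_{r_n}$. Since every $g \in C_n = T_{R_n-r_n}$ is a product of at most $R_n - r_n$ generators whose partial products remain in $T_{R_n-r_n}$, the center set is connected by such single-generator steps, so the overlap graph is connected and Theorem \ref{pos:regime} applies.

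For the non-identifiability half, I would set $B_n = T_{R_n - 2r_n}$; one first checks $R_n \ge 2r_n$ for large $n$, since otherwise monotonicity gives $\gamma(R_n) \le \gamma(2r_n) \le (1-\epsilon)\tfrac{2}{H_2(p)}\ln\gamma(R_n)$, which is impossible once $\gamma(R_n)$ is large. Condition \eqref{assump:shell_size_bound} is then immediate: as $K_n$ is symmetric, $K_n^{-1}K_n = T_{2r_n}$, so $|K_n^{-1}K_n| = \gamma(2r_n)$ and the hypothesis is exactly \eqref{assump:shell_size_bound}. For \eqref{assump:B_few_shells} I would compute the shell type at $a$, namely $a^{-1}C_n \cap K_n^{-1} = a^{-1}T_{R_n-r_n} \cap T_{r_n}$, and show it equals all of $T_{r_n}$ for every $a \in B_n$: if $a \in T_{R_n-2r_n}$ then $aT_{r_n} \subseteq T_{R_n-r_n}$, i.e.\ $T_{r_n} \subseteq a^{-1}T_{R_n-r_n}$. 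Hence $|I_{B_n}| = 1$ and \eqref{assump:B_few_shells} holds trivially.

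The main work, and the step I expect to be the real obstacle, is verifying \eqref{assump:B_large}, i.e.\ that the deep interior ball $B_n = T_{R_n-2r_n}$ is comparable to the whole region $T_{R_n}$ on the logarithmic scale. Here I would invoke submultiplicativity in the form $\ln\gamma(R_n) \le \ln\gamma(R_n-2r_n) + \ln\gamma(2r_n)$, so that
\[
    \frac{\ln(|B_n|)}{\ln(|C_nK_n|)} = \frac{\ln\gamma(R_n-2r_n)}{\ln\gamma(R_n)} \ge 1 - \frac{\ln\gamma(2r_n)}{\ln\gamma(R_n)}.
\]
The non-identifiability hypothesis bounds $\gamma(2r_n)$ by a constant times $\ln\gamma(R_n)$, so $\ln\gamma(2r_n) = O(\ln\ln\gamma(R_n))$, whence $\ln\gamma(2r_n)/\ln\gamma(R_n) \to 0$ by (G1); together with the trivial bound $\ln\gamma(R_n-2r_n) \le \ln\gamma(R_n)$ this yields \eqref{assump:B_large}, and Theorem \ref{neg:regime} then gives non-identifiability a.a.s. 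The delicate point throughout is that the dependence on $r_n$ enters only through the logarithmically small quantities $\gamma(r_n-1)$ and $\gamma(2r_n)$, so one must repeatedly convert the multiplicative ball estimates into additive ones via logarithms and lean on $\gamma(R_n)\to\infty$ to absorb the lower-order $\ln\ln$ terms.
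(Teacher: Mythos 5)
Your proposal is correct and follows essentially the same route as the paper: both halves verify the hypotheses of Theorems \ref{pos:regime} and \ref{neg:regime} using $C_nK_n = T_{R_n}$, a single ball-shaped overlap shape (the paper takes $\opint_T K_n$ and shows it contains $T_{r_n-1}$; you take $T_{r_n-1}$ directly, which rests on the identical computation $s^{-1}T_{r_n-1} \subset T_{r_n}$), and $B_n = T_{R_n-2r_n}$ with the same submultiplicativity estimate for \eqref{assump:B_large}. The only notable deviation is that the paper first proves the stronger preliminary fact $r_n/R_n \to 0$ before defining $B_n$, whereas your simpler check that $R_n \ge 2r_n$ for all large $n$ suffices, since non-emptiness of $B_n$ and the \eqref{assump:B_large} computation require nothing more.
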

In short, if the read radius $r_n$ is large enough relative to the observable radius $R_n$ (in the sense that the first inequality in corollary is satisfied), then we have identifiability with high probability. However, if the read radius $r_n$ is sufficiently small relative to the observable radius $R_n$ (in the sense that the second inequality in the corollary is satisfied), then non-identifiability occurs with high probability.

\begin{rmk}
    For a finitely generated group $G$ with polynomial growth of degree $d$, the previous corollary gives that if $r_n = \omega\bigl(\sqrt[d]{\ln(R_n)}\bigr)$, then identifiability holds a.a.s., and if $r_n = o\bigl(\sqrt[d]{\ln(R_n)}\bigr)$, then non-identifiability holds a.a.s. Similarly, for a finitely generated group $G$ with exponential growth, the previous corollary gives that if $r_n = \omega\bigl(\ln(R_n)\bigr)$, then identifiability occurs a.a.s., and if $r_n = o\bigl(\ln(R_n)\bigr)$, then non-identifiability occurs a.a.s. Additionally, if one has more precise information about the growth rate function of the group $G$, then the previous corollary may be used to deduce sharper results about identifiability and non-identifiability.
\end{rmk}

\subsection{Related work} \label{ssect:relwork}

The approach to DNA sequencing now known as shotgun sequencing has been used in genomics for decades. See the review \cite{green2001strategies} for a detailed description of this methodology from the biological perspective, along with original references. In complementary work, the mathematical analysis of shotgun sequencing has also been studied over many years. For early work in this direction, see \cite{lander1988genomic}. For more recent results, see \cite{motahari} and references therein.

The mathematical problem of shotgun assembly of DNA sequences was generalized substantially by Mossel and Ross \cite{mossel}. In their formulation, one observes the $r$-neighborhoods of the vertices of a labeled graph (for some radius $r$) and attempts to reconstruct the graph (up to an appropriate notion of isomorphism). The case of DNA sequences can be recovered by considering line graphs. They established detailed results for certain classes of graphs, including  the natural (nearest-neighbor) graph associated to $d$-dimensional integer lattices $\Z^d$. We discuss this work in more detail and compare it with our results in Section \ref{Sect:LatticesBasic}. More recent work in this direction has considered shotgun assembly of random graphs \cite{mossel2015shotgun} and shotgun assembly of random jigsaw puzzles \cite{bordenave2020shotgun}.

Substantial attention has been devoted to understanding specific algorithms for shotgun DNA sequencing; see the reviews \cite{miller2010assembly,pop2009genome}. Recent work on jigsaw percolation \cite{bollobas2015threshold,brummitt2015jigsaw,gravner2017nucleation} provides expected time complexity bounds for solving graph-based jigsaw puzzles using a randomized algorithm. By viewing the graph-based jigsaw puzzle as a shotgun reconstruction problem, the results on jigsaw percolation can be seen to provide an analysis of certain shotgun reconstruction algorithms on graphs. Although we do not pursue algorithmic aspects of shotgun reconstruction on groups in this work, we view it as an interesting avenue for future work.

Finally, we note that the shotgun identification problem on groups is related to a topic known as synchronization problems over groups. In such problems, one is given an index set $S$ and a collection of some group elements $g_{u,v}$, with $u,v \in S$, and one would like to estimate a collection of group elements $(g_u)_{u \in S}$ such that $g_{u,v} = g_u g_v^{-1}$. These problems have been considered from a wide variety perspectives, including applied \cite{perry2018message,singer2011three}, geometric \cite{gao2019geometry}, and probabilistic \cite{abbe2017group}. Although we do not address synchronization problems directly in this work, we note that one may derive a type of synchronization problem from the shotgun identification problem: let $S$ be a collection of reads, and let $g_{u,v}$ be defined as the group translation of read $u$ that has maximal overlap with read $v$. In future work it  might be interesting to analyze the relationship between these problems more closely. 

\subsection{Overview}

In Section \ref{sect:background}, we present the necessary background and notation. Section \ref{sect:prob} contains statements and proofs of the basic probabilistic tools that we use to prove our main results. In Section \ref{sect:pos} we prove our main identifiability result, Theorem \ref{pos:regime}, and in Section \ref{sect:neg} we prove our main non-identifiability result, Theorem \ref{neg:regime}. In Section \ref{sect:ex}, we apply these theorems to some families of examples. Finally, in Section \ref{sect:conc} we discuss possible directions for future research. 

\section{Background and notation} \label{sect:background}

\subsection{Basic notation}

Here and throughout the paper we assume that $G$ is a countable group, $\cA$ is a finite set, called the alphabet, $C$ is a finite nonempty subset of $G$, and $K$ is a finite nonempty subset of $G$. We endow $\cA$ with the discrete topology and $\cA^G$ with the product topology, which makes $\cA^G$ into a compact metrizable space. Then we endow $\cA^G$ with the Borel $\sigma$-algebra and suppress this choice in our notation.

For $g \in G$ and $A \subset G$, we let $gA = \{g a : a \in A\}$. Additionally, we say that two subsets $A$ and $B$ of $G$ are translates (of each other) if there is some $g \in G$ such that $A = gB$. Further, we may also \dquote{invert} a subset with the notation $A^{-1} = \{ a^{-1} : a \in A\}$, and for two subsets $A,B \subset G$, we let $AB = \{ab : a \in A, b \in B\}$. We let $|E|$ denote the cardinality of any finite set $E$, and note that for any $g \in G$ and $A \subset G$, we have $|gA| = |A|$. Lastly, for any element $g \in G$, we let $\ord(g)$ denote the order of the element $g$, and we let $\langle g \rangle$ denote the cyclic group generated by $g$.

\subsection{Patterns, reads, and identifiability}

We now introduce the notions of patterns, reads, and identifiability, all of which are used throughout the remainder of the paper.

\begin{defn}
    Let $F \subset G$. Any element $w \in \cA^F$ is called a \textit{pattern}, and we may say that the pattern $w \in \cA^F$ has \textit{shape} $F$.
\end{defn}

We view patterns with shape $F$ as a function from $F$ to $\cA$. For ease of notation, if $w \in \cA^F$ and $E \subset F$, then we let $w(E) = w|_E$ denote the restriction of $w$ to the shape $E$. For any finite shape $F \subset G$ and any pattern $w$ with shape $F$, we define the associated cylinder set,
\begin{equation*}
    [w] = \Bigl\{ x \in \cA^G : x(F) = w \Bigr\}.
\end{equation*}
Any such cylinder set is a clopen subset of $\cA^G$, and we remind the reader that the clopen subsets of $\cA^G$ generate the product topology and the Borel $\sigma$-algebra on $\cA^G$.

For any $F \subset G$, pattern $w \in \cA^F$, and $g \in G$, we define the function $\sigma^g:\cA^F \to \cA^{g^{-1}F}$ by the rule
\begin{equation*}
     \sigma^{g}(w)(h) = w(gh), \quad \forall h \in g^{-1}F.
\end{equation*}
We remark that for any $g \in G$, the map $\sigma^g : \cA^G \to \cA^G$ is a homeomorphism. Also, we note that the collection $\{\sigma^g\}_{g \in G}$ defines a \textit{right} group action on $\cA^G$, meaning that it satisfies the composition rule $\sigma^g \circ \sigma^h = \sigma^{hg}$ (as opposed to the more standard composition rule $\sigma^g \circ \sigma^h = \sigma^{gh}$ of left group actions). This right group action is equivalent to the usual (left) group action of the opposite group $G^\text{op}$. However, in this work we do not use any properties of $\sigma$ as a group action; instead, we simply use this notation to refer to various translates of patterns. 

For any finite set $F$, we let $\MS(F)$ denote the set of all multi-sets of elements of $F$. We now give a proper definition of the read operator.
\begin{defn}
    For any $w \in \cA^{CK}$, we define the multi-set
    \begin{equation*}
        \cR_{C,K}(w) = \bigl\{ \sigma^c(w)(K) : c \in C \bigr\}.
    \end{equation*} 
    Note that $\cR_{C,K}$ defines a map $\cR_{C,K} : \cA^{CK} \to \MS(\cA^K)$, which we call the \textit{read operator}. Furthermore, any element of $\cR_{C,K}(w)$ is called a \textit{read}.
\end{defn}

Next we say precisely what it means for a pattern to be identifiable.
\begin{defn}
    Let $w \in \cA^{CK}$. The \textit{identifiability class} of $w$, denoted $\cC(w)$, is the set of patterns that are equal to $w$ after a group translation that preserves the set of centers:
    \begin{equation*}
        \cC(w) = \Bigl\{ v \in \cA^{CK} : \exists g \in G \text{ such that } gC = C \tand \sigma^g (v) = w \Bigr\}.
    \end{equation*}
    Furthermore, a pattern $w \in \cA^{CK}$ is $(C,K)$-\textit{identifiable} (or just \textit{identifiable}) if 
    \begin{equation*}
        \cR_{C,K}^{-1} \bigl( \cR_{C,K}(w) \bigr) = \cC(w),
    \end{equation*}
    where $\cR_{C,K} : \cA^{CK} \to \MS(\cA^K)$ is the read operator and $\cR_{C,K}^{-1}$ is its set inverse function. 
\end{defn}

In other words, $w$ is identifiable if the only other patterns from $\cA^{CK}$ with the same multi-set of reads are translations of $w$ by an element that preserves the set of centers.
The following lemma provides a useful characterization of when $\cR_{C,K}(w) = \cR_{C,K}(w')$ for two patterns $w,w' \in \cA^{CK}$.

\begin{lemma} \label{read_eq}
    Let $G$ be a group, let $C,K \subset G$, and let $w, w' \in \cA^{CK}$. Then $\cR_{C,K}(w) = \cR_{C,K}(w')$ if and only if there exists a permutation $\phi: C \to C$ such that $\forall c \in C$,
    \begin{equation*}
        \sigma^c(w)(K) = \sigma^{\phi(c)}(w')(K).
    \end{equation*}
    \begin{proof}
        Assume $\cR_{C,K}(w) = \cR_{C,K}(w')$. For $u \in \cA^K$, let $n_u$ denote the number of times that $u$ appears in $\cR_{C,K}(w)$. Then define three sets:
        \begin{align*}
            S_1 & = \{ (u,c) : u \in \cA^K, c \in C, \sigma^c(w)(K) = u\} \\
            S_2 & = \{ (u,i) : u \in \cA^K, 1 \leq i \leq n_u \} \\
            S_3 & = \{ (u,c) : u \in \cA^K, c \in C, \sigma^c(w')(K) = u\}.
        \end{align*}
        By definition of $\cR_{C,K}(w)$, there is a bijection $\phi_1 : S_1 \to S_2$, and similarly there is a bijection $\phi_2 : S_3 \to S_2$. Additionally, since each center gives rise to exactly one element of $S_1$ and one element of $S_3$ there are bijections $\phi_3 : C \to S_1$ and $\phi_4 : C \to S_3$. Then the desired permutation is given by $\phi : C \to C$, where $\phi = \phi_4^{-1}\circ\phi_2^{-1} \circ \phi_1 \circ \phi_3$.
        
        The reverse implication is clear from the definitions.
    \end{proof}
\end{lemma}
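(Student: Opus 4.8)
The plan is to prove the equivalence in Lemma \ref{read_eq} directly from the definition of $\cR_{C,K}$ as a multi-set. Recall that $\cR_{C,K}(w) = \{\sigma^c(w)(K) : c \in C\}$ is a multi-set in $\MS(\cA^K)$, so two such multi-sets are equal precisely when every element of $\cA^K$ occurs with the same multiplicity in each. The content of the lemma is to repackage this multiplicity condition as the existence of a single bijection $\phi : C \to C$ that matches the reads of $w$ indexed by centers with the reads of $w'$ indexed by centers.

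For the forward implication, I would argue as follows. Suppose $\cR_{C,K}(w) = \cR_{C,K}(w')$. For each read value $u \in \cA^K$, let $n_u$ be its multiplicity in this common multi-set; equivalently, $n_u = |\{c \in C : \sigma^c(w)(K) = u\}| = |\{c \in C : \sigma^c(w')(K) = u\}|$, where the equality of these two cardinalities is exactly the statement that the multi-sets agree. The natural strategy is then to build the permutation in a value-by-value fashion: for each fixed $u$, the set of centers in $C$ producing read $u$ under $w$ and the set producing read $u$ under $w'$ both have size $n_u$, so there is some bijection between them; taking the union of these bijections over all $u \in \cA^K$ yields a permutation $\phi$ of $C$ with $\sigma^{\phi(c)}(w')(K) = \sigma^c(w)(K)$ for every $c$. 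The bookkeeping in the excerpt accomplishes exactly this by routing everything through the common index set $S_2 = \{(u,i) : 1 \le i \le n_u\}$: the maps $\phi_3 : C \to S_1$ and $\phi_4 : C \to S_3$ record, for each center, the read it produces under $w$ and under $w'$ respectively, while $\phi_1 : S_1 \to S_2$ and $\phi_2 : S_3 \to S_2$ assign an index $i$ within each value-class. Composing as $\phi = \phi_4^{-1} \circ \phi_2^{-1} \circ \phi_1 \circ \phi_3$ produces the desired permutation, and one checks that it preserves reads by tracking a center through the chain.

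For the reverse implication, if such a permutation $\phi$ exists with $\sigma^c(w)(K) = \sigma^{\phi(c)}(w')(K)$ for all $c$, then applying $\phi$ gives a bijection of the index set $C$ that identifies the read produced by each center of $w$ with a read produced by a center of $w'$; since $\phi$ is a bijection and the multi-sets are simply the images of $C$ under $c \mapsto \sigma^c(w)(K)$ and $c \mapsto \sigma^c(w')(K)$, this shows the two multi-sets carry the same elements with the same multiplicities, hence are equal. This direction is essentially immediate from the definition of a multi-set, which is why the excerpt dispatches it in a single sentence.

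The only genuinely delicate point is the forward direction: one must be careful that the objects assembled into $\phi$ are honest bijections and that the multiplicities line up correctly, since a naive attempt to define $\phi$ pointwise could fail to be well-defined when several centers produce the same read. Routing through the auxiliary set $S_2$ (which has exactly $n_u$ slots for each value $u$) sidesteps this by making the index $i$ an explicit part of the data, so the main obstacle is really just ensuring the four auxiliary maps are defined consistently rather than any substantive mathematical difficulty. No probabilistic input, group action property, or identifiability machinery is needed here — the lemma is a purely combinatorial statement about multi-sets indexed by $C$, and the proof is complete once the composition $\phi = \phi_4^{-1} \circ \phi_2^{-1} \circ \phi_1 \circ \phi_3$ is verified to be a read-preserving bijection.
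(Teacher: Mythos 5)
Your proposal is correct and is essentially the paper's own argument: you match centers fiber-by-fiber over each read value $u$ (using that the multiplicities $n_u$ agree), which is exactly what the paper's bookkeeping through $S_1$, $S_2$, $S_3$ and the composition $\phi = \phi_4^{-1}\circ\phi_2^{-1}\circ\phi_1\circ\phi_3$ accomplishes, and you dispatch the reverse implication in the same one-line fashion. No gaps; nothing further to add.
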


With this characterization, we can prove that one of the set containments required for identifiability is always true. Indeed, any pattern in the identifiability class of $w$ must have the same multi-set of reads as $w$. We record this fact in the following lemma. 
\begin{lemma} \label{id_char}
    Suppose $G$ is a group, $C, K \subset G$, and $w \in \cA^{CK}$. Then
    \begin{equation*}
        \mathcal{C}(w) \subset \cR_{C,K}^{-1} \bigl( \cR_{C,K}(w) \bigr),
    \end{equation*}
    and therefore if
    \begin{equation*}
         \cR_{C,K}^{-1} \bigl( \cR_{C,K}(w) \bigr) \subset \mathcal{C}(w),
    \end{equation*}
    then $w$ is identifiable.
    \begin{proof}
        
        Let $v \in \cC(w)$, i.e., there exists $g \in G$ such that $gC = C$, and $\sigma^g(v) = w$. 
        We wish to show that $\cR_{C,K}(w) = \cR_{C,K}(v)$. To start, we define $\phi : C \to C$ by $\phi(c) = gc$. This is clearly a permutation of $C$, since $C = gC$. Then, letting $c \in C$ and $k \in K$, we have that
        \begin{equation*}
            \sigma^c(w)(k) = w(ck) = \sigma^g(v)(ck) = \sigma^{gc}(v)(k) = \sigma^{\phi(c)}(v)(k),
        \end{equation*}
        and therefore $\sigma^c(w)(K) = \sigma^{\phi(c)}(v)(K)$. Then by Lemma \ref{read_eq}, we conclude that $\cR_{C,K}(w) = \cR_{C,K}(v)$, as desired.
    \end{proof}
\end{lemma}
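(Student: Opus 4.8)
The plan is to prove the asserted inclusion $\cC(w) \subset \cR_{C,K}^{-1}\bigl(\cR_{C,K}(w)\bigr)$ and then read off the ``therefore'' clause directly from the definition of identifiability. Indeed, $w$ is identifiable precisely when $\cR_{C,K}^{-1}\bigl(\cR_{C,K}(w)\bigr) = \cC(w)$; once the inclusion is in hand, combining it with the hypothesized reverse inclusion $\cR_{C,K}^{-1}\bigl(\cR_{C,K}(w)\bigr) \subset \cC(w)$ yields the equality, so the second half of the statement requires no additional argument. All of the content thus lies in the first inclusion, which says informally that translating $w$ by a center-preserving element cannot change its multi-set of reads.

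To establish the inclusion, first I would fix an arbitrary $v \in \cC(w)$, which by definition provides some $g \in G$ with $gC = C$ and $\sigma^g(v) = w$. The goal becomes showing $\cR_{C,K}(v) = \cR_{C,K}(w)$, and the natural tool is Lemma \ref{read_eq}, which reduces this equality to producing a permutation $\phi : C \to C$ satisfying $\sigma^c(w)(K) = \sigma^{\phi(c)}(v)(K)$ for all $c \in C$. The candidate permutation is $\phi(c) = gc$; it maps $C$ into $C$ because $gC = C$, and it is injective because left translation by $g$ is injective, so $\phi$ is a bijection of $C$.

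It then remains to verify the read-matching condition. I would check it pointwise: for $c \in C$ and $k \in K$, unwinding the definition of $\sigma$ gives $\sigma^c(w)(k) = w(ck)$, and substituting $w = \sigma^g(v)$ turns this into $w(ck) = v(gck) = \sigma^{gc}(v)(k) = \sigma^{\phi(c)}(v)(k)$, whence $\sigma^c(w)(K) = \sigma^{\phi(c)}(v)(K)$. Lemma \ref{read_eq} then delivers $\cR_{C,K}(v) = \cR_{C,K}(w)$, completing the proof. This argument is essentially definitional, so there is no serious obstacle; the only point demanding care is the bookkeeping in the translation notation, since $\{\sigma^g\}$ is a \emph{right} action obeying the reversed rule $\sigma^g \circ \sigma^h = \sigma^{hg}$ (so that $\sigma^c \circ \sigma^g = \sigma^{gc}$, which is exactly why the index $gc$ appears). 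One must confirm both that $\phi(c) = gc$, rather than $cg$, is the choice that aligns the reads and that each such index $gc$ genuinely lies in $C$; the hypothesis $gC = C$ is precisely what guarantees the latter.
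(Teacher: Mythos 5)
Your proposal is correct and follows essentially the same route as the paper's proof: both fix $v \in \cC(w)$ with $\sigma^g(v) = w$ and $gC = C$, take the permutation $\phi(c) = gc$, verify $\sigma^c(w)(k) = \sigma^{\phi(c)}(v)(k)$ pointwise, and invoke Lemma \ref{read_eq} to conclude $\cR_{C,K}(w) = \cR_{C,K}(v)$. Your added remarks on the right-action bookkeeping and on why the ``therefore'' clause is purely definitional are accurate but do not change the substance of the argument.
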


\subsection{Probability vectors and relevant properties}

Here we introduce a family of complexity measures for discrete probability distributions, which will be used throughout this work. For a natural number $m$, we let
\begin{equation*}
    \Delta^m = \biggl\{ p = (p_1,\dots,p_m) \in [0,1]^m :  \sum_{k=1}^m p_k = 1 \biggr\}.
\end{equation*}
We use the same notation for the set of probability vectors indexed by any alphabet $\mathcal{A}$ with $|\mathcal{A}|=m$.

\begin{defn}
    Let $p$ be a probability vector on the finite alphabet $\cA$. For any natural number $i$, we define the function $\pi_i: \Delta^{|\cA|} \to (0,1]$ as
    \begin{equation*}
        \pi_i(p) = \sum_{a \in \cA} p_a^i.
    \end{equation*}
\end{defn}
We note that the second order R\'enyi entropy can be written in terms of $\pi_2(p)$: 
\begin{equation*}
    H_2(p) = -\ln\bigl(\pi_2(p)\bigr).
\end{equation*}

Next we require bounds on $\pi_i(p)$ in terms of $\pi_2(p)$. We use the standard $\ell^p$ norm on $\R^m$, which we denote by $\lVert \cdot \rVert_p$, and which satisfies the elementary property that if $1 \le p \le q$ and $x \in \R^m$, then $\lVert x \rVert_q \le \lVert x \rVert_p$. We include proofs of the following two lemmas for completeness.

\begin{lemma} \label{pvec:pi_upper}
    Let $p$ be a probability vector, and let $i \ge 2$ be a natural number. Then
    \begin{equation*}
        \pi_i(p) \le \pi_2(p)^{\frac{i}{2}}.
    \end{equation*}
    \begin{proof}
        By the elementary property mentioned above, we have that 
        $\lVert p \rVert_i \le \lVert p \rVert_2$ for any $i \ge 2$, and then
        \begin{equation*}
            \pi_i(p) = \left(\pi_i(p)^{\frac{1}{i}}\right)^i = \left(\lVert p \rVert_i\right)^i \le \left( \lVert p \rVert_2\right)^i = \left(\pi_2(p)^{\frac{1}{2}}\right)^i = \pi_2(p)^{\frac{i}{2}}.
        \end{equation*}
    \end{proof}
\end{lemma}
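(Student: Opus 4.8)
The plan is to recognize the quantity $\pi_i(p) = \sum_{a \in \cA} p_a^i$ as the $i$-th power of the $\ell^i$ norm of $p$, viewed as a vector in $\R^{|\cA|}$, namely $\pi_i(p) = \lVert p \rVert_i^i$, and then to invoke the monotonicity of $\ell^p$ norms recalled immediately before the statement. Since all entries of $p$ are nonnegative and $i \ge 2$, I would start from the elementary inequality $\lVert p \rVert_i \le \lVert p \rVert_2$. Raising both sides to the $i$-th power and rewriting the right-hand side as $\lVert p \rVert_2^i = \bigl(\lVert p \rVert_2^2\bigr)^{i/2} = \pi_2(p)^{i/2}$ then yields the desired bound in a single line.

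The one point worth flagging is that the most naive estimate is not strong enough. Because $0 \le p_a \le 1$ and $i \ge 2$, one has $p_a^i \le p_a^2$ termwise, which gives only $\pi_i(p) \le \pi_2(p)$. This is weaker than the claim: since $\pi_2(p) \le 1$ and $i/2 \ge 1$, we have $\pi_2(p)^{i/2} \le \pi_2(p)$, so the target bound is the sharper of the two. Thus the proof must use the full force of norm monotonicity rather than a crude termwise comparison; this is the only genuine content in the argument.

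If one prefers to avoid quoting the norm inequality, essentially the same idea can be packaged by normalizing. Setting $q_a = p_a^2 / \pi_2(p)$ defines a probability vector $q$ on $\cA$, and then
\begin{equation*}
    \frac{\pi_i(p)}{\pi_2(p)^{i/2}} = \sum_{a \in \cA} \left( \frac{p_a^2}{\pi_2(p)} \right)^{i/2} = \sum_{a \in \cA} q_a^{i/2} \le \sum_{a \in \cA} q_a = 1,
\end{equation*}
where the inequality uses $q_a \in [0,1]$ together with $i/2 \ge 1$. Rearranging gives $\pi_i(p) \le \pi_2(p)^{i/2}$. Either route is short; there is no substantive obstacle beyond making sure to extract the exponent $i/2$ correctly rather than settling for the weaker bound $\pi_i(p) \le \pi_2(p)$.
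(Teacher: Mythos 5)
Your primary argument is exactly the paper's proof: identify $\pi_i(p) = \lVert p \rVert_i^i$, apply the monotonicity $\lVert p \rVert_i \le \lVert p \rVert_2$, and raise to the $i$-th power. The normalized variant with $q_a = p_a^2/\pi_2(p)$ is a correct, self-contained alternative, and your remark that the crude termwise bound $\pi_i(p) \le \pi_2(p)$ falls short of the needed exponent $i/2$ is well taken.
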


\begin{lemma} \label{pvec:pi_lower}
    Let $\cA$ be a finite set, and let $p = (p_a)_{a \in \cA}$ be a probability vector on $\cA$. Then for any natural number $i \ge 2$,
    \begin{equation*}
        \pi_i(p) \ge \pi_2(p)^{i-1}.
    \end{equation*}
    \begin{proof}
        Suppose $X$ is a real-valued random variable that takes the value $p_a$ with probability $p_a$, i.e., $\P(X = p_a) = p_a$. Let $\phi:[0,1] \to [0,1]$ be defined as $\phi(x) = x^{i-1}$. Since $i \ge 2$, $\phi$ is convex and we can apply Jensen's Inequality to get that
        \begin{equation*}
            \phi(\E[X]) \le \E[\phi(X)].
        \end{equation*}
        We now derive an expression for each side of this inequality. For the left hand side,
        \begin{equation*}
            \phi(\E[X]) = \phi\left(\sum_{a \in \cA} p_a \cdot p_a\right) = \phi\bigl(\pi_2(p)\bigr) = \pi_2(p)^{i-1}.
        \end{equation*}
        For the right hand side, we have
        \begin{equation*}
            \E[\phi(X)] = \sum_{a \in \cA} \phi(p_a) \cdot p_a = \sum_{a \in \cA} p_a^i = \pi_i(p).
        \end{equation*}
        Combining the previous three displays yields the desired result.
    \end{proof}
\end{lemma}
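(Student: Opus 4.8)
The plan is to reduce the inequality $\pi_i(p) \ge \pi_2(p)^{i-1}$ to a single application of Jensen's inequality, after noticing that the vector $p$ can serve as its own weighting. Concretely, I would introduce the $[0,1]$-valued random variable $X$ that takes the value $p_a$ with probability $p_a$ for each $a \in \cA$; this is a genuine distribution since $\sum_{a} p_a = 1$. The crucial observation is that $\E[X] = \sum_{a} p_a \cdot p_a = \pi_2(p)$, so that the value $\pi_2(p)$ appearing on the right-hand side of the target inequality is exactly the mean of $X$. Any convex function evaluated along $X$ will then, via Jensen, produce a lower bound at $\pi_2(p)$, which is precisely the shape of the inequality we want.

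Next I would take the convex function $\phi(x) = x^{i-1}$ on $[0,1]$; since $i \ge 2$, the exponent $i-1 \ge 1$ makes $\phi$ convex, so Jensen yields $\phi(\E[X]) \le \E[\phi(X)]$. Evaluating the two sides finishes the argument: the left-hand side is $\phi(\pi_2(p)) = \pi_2(p)^{i-1}$, which is the desired lower bound, while the right-hand side is $\E[X^{i-1}] = \sum_{a} p_a^{i-1}\cdot p_a = \sum_{a} p_a^{i} = \pi_i(p)$. Rearranging gives exactly $\pi_i(p) \ge \pi_2(p)^{i-1}$, completing the proof. Note the structural parallel with Lemma~\ref{pvec:pi_upper}: both control $\pi_i(p)$ by $\pi_2(p)$, but the upper bound there comes from norm monotonicity, whereas the lower bound here is driven by convexity in the opposite direction.

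The only point that requires care — and I would flag it as a bookkeeping matter rather than a genuine obstacle — is matching the exponents. One must verify that the probability mass $p_a$ combines with the factor $p_a^{i-1}$ from $\phi$ to reproduce $p_a^{i}$, and simultaneously that $\E[X] = \pi_2(p)$ rather than $\pi_1(p) = 1$; choosing $\phi(x) = x^{i-1}$ (and not $x^{i}$) is exactly what aligns these. Both checks are immediate once the self-weighting device is fixed, so there is no hard step.

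As an alternative that avoids introducing a random variable, I would exploit log-convexity of the power sums in their index. By Cauchy--Schwarz, for each $i \ge 2$ one has $\pi_i(p)^2 = \left(\sum_{a} p_a^{(i+1)/2}\, p_a^{(i-1)/2}\right)^2 \le \pi_{i+1}(p)\,\pi_{i-1}(p)$, so the increments $\ln \pi_i(p) - \ln \pi_{i-1}(p)$ are non-decreasing in $i$. Since $\pi_1(p) = \sum_{a} p_a = 1$, telescoping from $\ln \pi_1(p) = 0$ gives $\ln \pi_i(p) \ge (i-1)\bigl(\ln \pi_2(p) - \ln \pi_1(p)\bigr) = (i-1)\ln \pi_2(p)$, which is the claim after exponentiating. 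Either route is short, and I would present the Jensen version for its brevity.
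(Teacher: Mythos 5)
Your primary argument is exactly the paper's proof: the same self-weighted random variable $X$ with $\P(X = p_a) = p_a$, the same convex function $\phi(x) = x^{i-1}$, and the same application of Jensen's inequality, so the proposal is correct and takes essentially the same approach. (Your Cauchy--Schwarz/log-convexity alternative is also valid, but since you elect to present the Jensen version, the two proofs coincide.)
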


\section{Probabilistic framework and repeats in patterns} \label{sect:prob}

In this section we develop some basic probabilistic tools that will be used in later sections. Throughout this section, we suppose that we have a fixed shotgun identification problem on a group, where $\cA$, $p$, $G$, $C$, and $K$ are as in Section \ref{sect:background}. Here and throughout the rest of this work, we define $\mu_p$ to be the product measure $p^G$, which extends the definition of $\mu_p$ from $CK$ (as it was defined in Section \ref{Sect:Intro}) to all of $G$. We remind the reader that the main event of interest in this work is $\cI_{C,K}$, which is referred to as \textit{identifiability} and is formally defined as
\begin{equation*}
    \cI_{C,K} = \Bigl\{ x \in \cA^{G} : x(CK) \text{ is $(C,K)$-identifiable} \Bigr\} = \bigcup_{\substack{w \in \cA^{CK} \\ w \text{ is $(C,K)$-identifiable} } } [w].
\end{equation*}

The main purpose of this section is to prove Theorem \ref{prob:repeat_bounds}, which gives bounds on the probability that a randomly drawn pattern contains a repeated subpattern, and Theorem \ref{prob:n_disj_repeat_prob}, which gives the probability that a randomly drawn pattern contains $n$ disjoint repeated subpatterns. The notion of a \textit{repeat} is formally defined later in this section. We remark that many of the intermediate results in this section are true in greater generality than is stated here, but we do not pursue these generalizations as they are not used later in this work.

Let us begin by recording two elementary facts regarding product measures for future reference. First, suppose $\cA$ is a finite set and $p$ is a probability vector indexed by $\cA$. Further suppose that $G$ is a countable set and $\mu_p = p^G$ is the product measure on $\cA^G$. Then for any finite shape $F \subset G$ and any pattern $w \in \cA^F$, we have that
\begin{equation} \label{prob:fact1}
    \mu_p([w]) = \prod_{a \in \cA} (p_a)^{|\{ v \in F : w(v) = a\}|}.
\end{equation}
Next, for any finite set $F \subset G$, we let $\cS(F)$ denote the $\sigma$-algebra generated by all cylinder sets on $F$, i.e., $\cS(F)$ consists of all events that can be written as a union of some cylinder sets defined by patterns with shape $F$. Now suppose that $A_1,\dots,A_n$ are finite subsets of $G$ and for each $k = 1, \dots, n$, we have events $E_k \in \cS(A_k)$. If the sets $A_1,\dots,A_n$ are pairwise disjoint, then the collection $\{E_k\}_{k=1}^n$ is mutually independent, and in particular
\begin{equation} \label{prob:fact2}
    \mu_p \left( \bigcap_{k=1}^n E_k \right) = \prod_{k=1}^n \mu_p(E_k).
\end{equation}

\subsection{Repeated patterns}

In this section we turn our attention to estimating the probability of repeated patterns. First we give a precise definition of a repeat.
\begin{defn}
    Let $A \subset G$ be finite, and let $g \in G$. We say that $w \in \cA^{A \cup gA}$ has an $(A;g)$-\textit{repeat} if for each $h \in A$ we have $w(h) = w(gh)$. Furthermore, we let $\cR(A;g)$ denote the event that a randomly drawn configuration has an $(A;g)$-repeat: 
    \begin{equation*}
        \cR(A;g) = \bigcup_{\substack{w \in \cA^{A \cup gA} \\ w \text{ has an $(A;g)$-repeat}}} [w].
    \end{equation*}
\end{defn}

Now we define some additional notation and terminology that is helpful in analyzing repeats. 
In particular, we will define an equivalence relation on $A \cup gA$ such that if a pattern $w$ has an $(A;g)$-repeat and $h_0$ is equivalent to $h_1$, then $w(h_0) = w(h_1)$.
Let $A \subset G$ be finite, $g \in G$, and $F = A \cup gA$. Define the relation $\prec_g$ on $F$ by setting $u \prec_g v$ if $u \in A$ and $v = gu$. Note the following two facts:
\begin{itemize}
    \item[(i)]  if $u_1 \prec_g v$ and $u_2 \prec_g v$, then $g u_1  = v = g u_2$, and thus $u_1 = u_2$;
    \item[(ii)] if $u \prec_g v_1$ and $u \prec_g v_2$, then $v_1  = gu = v_2$, and thus $v_1 = v_2$.
\end{itemize} 
Now, let $\sim_g$ be the symmetric, transitive closure of $\prec_g$, which is then an equivalence relation on $F$. Consider any $u \sim_g v \in F$. If $u = v$, then $v = g^0u$. Otherwise, by the definition of $\sim_g$ as the symmetric, transitive closure of $\prec_g$, together with the facts (i) and (ii), we see that (swapping $u$ and $v$ if necessary) for some $n \in \N$, there must be $w_1, w_2, \dots, w_{n-1} \in A$ such that
\begin{equation*}
    u \prec_g w_1 \prec_g w_2 \prec_g \cdots \prec_g w_{n-1} \prec_g v.
\end{equation*}
Note this also covers the case that $u \prec_g v$. By definition, this means that $v = g^nu$, and $\forall k \in [1,n)$, we have $w_k = g^ku \in A$.

As $\sim_g$ is an equivalence relation, it describes a partition of $F$ into equivalence classes. We let $\O(A;g)$ denote this set of equivalence classes and call its elements \textit{orbits}. This relation plays a role in our analysis of repeated patterns via the following lemma.
\begin{lemma} \label{prob:eqrel}
    Suppose $A \subset G$ is finite, $g \in G$, and $F = A \cup g A$. For each $x \in \cR(A;g)$ and $u,v \in F$, if $u \sim_g v$, then $x(u) = x(v)$. 
    \begin{proof}
       Let $x \in \cR(A;g)$. Suppose $u \sim_g v$. First, if $u = v$, then clearly $x(u) = x(v)$. Now consider the case $u \ne v$. By interchanging the role of $u$ and $v$ if necessary, we have that $v = g^n u$ for some $n \ge 1$ and $g^k u \in A$ for all $0 \le k \le n-1$. Since $x \in \cR(A;g)$, for each $k = 1, \dots, n$, we have $x(g^k u) = x(g g^{k-1} u) = x(g^{k-1} u)$. Chaining these equalities together, we get $x(v) = x( g^n u) = x(g^{n-1} u) = \dots = x(u)$, which concludes the proof. 
    \end{proof}
\end{lemma}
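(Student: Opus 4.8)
The plan is to prove the contrapositive direction of the chained equalities directly from the definitions of $\sim_g$ and $\cR(A;g)$. The key observation is that the defining relation $u \prec_g v$ (meaning $u \in A$ and $v = gu$) is exactly the relation along which a repeat forces equality: if $x$ has an $(A;g)$-repeat, then $x(h) = x(gh)$ for every $h \in A$, so $u \prec_g v$ immediately gives $x(u) = x(gu) = x(v)$. Since $\sim_g$ is by definition the symmetric, transitive closure of $\prec_g$, one expects equality of $x$-values to propagate along any $\sim_g$-chain, so the strategy is to reduce an arbitrary $\sim_g$-relation to a sequence of $\prec_g$-steps and apply the repeat property at each step.

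First I would fix $x \in \cR(A;g)$ and $u,v \in F$ with $u \sim_g v$, and dispose of the trivial case $u = v$. For the case $u \ne v$, I would invoke the structural description of $\sim_g$ already established in the text just before the lemma: swapping $u$ and $v$ if necessary, there is some $n \ge 1$ with $v = g^n u$ and with $g^k u \in A$ for all $0 \le k \le n-1$. This is the crucial input, and it is precisely the content worked out in the paragraph preceding the lemma (using facts (i) and (ii) to guarantee that the closure of $\prec_g$ takes the clean form of a single chain $u \prec_g gu \prec_g \dots \prec_g g^n u$). With this in hand, the argument becomes a straightforward telescoping.

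Next I would carry out the chaining. For each $k$ with $1 \le k \le n$, since $g^{k-1} u \in A$ (as $k-1 \le n-1$) and $x$ has an $(A;g)$-repeat, the repeat property applied at $h = g^{k-1}u$ yields
\begin{equation*}
    x(g^k u) = x\bigl(g \cdot g^{k-1} u\bigr) = x\bigl(g^{k-1} u\bigr).
\end{equation*}
Composing these $n$ equalities telescopes to $x(v) = x(g^n u) = x(g^{n-1} u) = \dots = x(u)$, completing the proof.

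The main thing to be careful about — rather than a genuine obstacle — is correctly invoking membership $g^{k-1}u \in A$ so that the repeat property is actually applicable at each step; this is why the indexing runs over $0 \le k \le n-1$ for membership while the equalities are indexed by $1 \le k \le n$. Since all the delicate structural work (establishing that $u \sim_g v$ forces the single-chain form $v = g^n u$ with interior points in $A$) is done in the preceding paragraph, this lemma is essentially an immediate corollary, and I do not anticipate any real difficulty beyond bookkeeping the indices.
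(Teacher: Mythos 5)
Your proposal is correct and follows exactly the paper's own argument: it invokes the chain description of $\sim_g$ from the paragraph preceding the lemma (namely $v = g^n u$ with $g^k u \in A$ for $0 \le k \le n-1$), applies the repeat property $x(h) = x(gh)$ at each $h = g^{k-1}u$, and telescopes. The index bookkeeping you flag is handled identically in the paper, so there is nothing further to add.
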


The next lemma gives an expression for the probability of the event $\cR(A;g)$ in terms of these equivalence classes. 
\begin{lemma} \label{prob:repeat_prob}
    Suppose $A \subset G$ is finite and $g \in G$. Then
    \begin{equation*}
        \mu_p \bigl( \cR(A;g) \bigr) = \prod_{O \in \O(A;g)} \pi_{|O|}(p). 
    \end{equation*}
    \begin{proof}
       For an equivalence class $O \in \O(A;g)$, let $E(O,a)$ denote the event that every element of $O$ is labeled by the symbol $a$, i.e., $E(O,a) = \{ x \in \cA^G : \forall u \in O, \, x(u) = a\}$. First, observe that for each equivalence class $O$, we have that
       \begin{equation*}
           \mu_p \bigl( E(O,a) \bigr) = (p_a)^{|O|}, 
       \end{equation*}
       where we have used \eqref{prob:fact1}. Next, let $E(O) = \bigcup_{a \in \cA} E(O,a)$, 
       and note that this is a disjoint union. Hence we have that
       \begin{equation} \label{prob:repeat_prob:eq1}
           \mu_p \bigl( E(O) \bigr) = \sum_{a \in \cA} \mu_p \bigl( E(O,a) \bigr) = \sum_{a \in \cA} (p_a)^{|O|} = \pi_{|O|}(p).
       \end{equation}
       Now observe that $\O(A;g)$ gives a partition of $F = A \cup gA$ into equivalence classes. In particular, these equivalence classes are all pairwise disjoint, and then by \eqref{prob:fact2} and the previous display, we have that
       \begin{equation*}
            \mu_p \left( \bigcap_{O \in \O(A;g)} E(O) \right)  = \prod_{O \in \O(A;g)} \mu_p \bigl( E(O) \bigr) = \prod_{O \in \O(A;g)} \pi_{|O|}(p).
       \end{equation*}
       Let us now check that 
       \begin{equation*}
           \cR(A;g) = \bigcap_{O \in \O(A;g)} E(O).
       \end{equation*}
       First, if $x \in \cR(A;g)$, then $x \in E(O)$ for each $O \in \O(A;g)$ by Lemma \ref{prob:eqrel}. Hence $\cR(A;g)$ is contained in the intersection. For the reverse inclusion, if $x \in E(O)$ for each $O \in \O(A;g)$, then we clearly have that $x(h) = x(gh)$ for all $h \in A$, which implies that $x \in \cR(A;g)$. 
       Finally, combining the two previous displays completes the proof of the lemma.
    \end{proof}
\end{lemma}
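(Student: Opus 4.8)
The plan is to reformulate the event $\cR(A;g)$ in terms of configurations being constant on the orbits of the equivalence relation $\sim_g$, and then to exploit the independence that the product measure $\mu_p$ provides across disjoint shapes. For each orbit $O \in \O(A;g)$, I would introduce the event $E(O)$ that $x$ is constant on $O$, together with the events $E(O,a) = \{x : x(u) = a \text{ for all } u \in O\}$, so that $E(O)$ is the disjoint union of the $E(O,a)$ over $a \in \cA$.

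The first and most important step is to establish the set identity
\begin{equation*}
    \cR(A;g) = \bigcap_{O \in \O(A;g)} E(O).
\end{equation*}
The forward inclusion is furnished directly by Lemma \ref{prob:eqrel}: any $x$ with an $(A;g)$-repeat is constant on each $\sim_g$-class. For the reverse inclusion, I would observe that every $h \in A$ satisfies $h \prec_g gh$ and hence $h \sim_g gh$, so $h$ and $gh$ lie in a common orbit; thus if $x$ is constant on every orbit, then $x(h) = x(gh)$ for all $h \in A$, which is exactly the condition defining $\cR(A;g)$.

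With this identity in hand, the remaining steps are computational. Using \eqref{prob:fact1}, I would compute $\mu_p(E(O,a)) = p_a^{|O|}$, and then sum over the disjoint union to obtain $\mu_p(E(O)) = \sum_{a \in \cA} p_a^{|O|} = \pi_{|O|}(p)$. Finally, since the orbits partition $F = A \cup gA$ into pairwise disjoint shapes and each $E(O)$ is measurable with respect to $\cS(O)$, the family $\{E(O)\}_{O \in \O(A;g)}$ is mutually independent by \eqref{prob:fact2}; multiplying the single-orbit probabilities then gives $\mu_p(\cR(A;g)) = \prod_{O \in \O(A;g)} \pi_{|O|}(p)$.

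I expect the only subtle point to be the reverse inclusion in the set identity, since it is what guarantees that being constant on every orbit is not merely necessary but also sufficient for an $(A;g)$-repeat; this rests on the small bookkeeping observation that the generating relation $h \prec_g gh$ already forces $h$ and $gh$ into the same class. Once that is secured, the passage from the orbit decomposition to the product formula is routine given the elementary product-measure facts \eqref{prob:fact1} and \eqref{prob:fact2}.
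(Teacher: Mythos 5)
Your proposal is correct and follows essentially the same route as the paper's proof: the same orbit decomposition into events $E(O)$, the same use of Lemma \ref{prob:eqrel} for the forward inclusion, and the same application of \eqref{prob:fact1} and \eqref{prob:fact2} to convert the partition into a product of $\pi_{|O|}(p)$ factors. Your explicit remark that $h \prec_g gh$ forces $h \sim_g gh$ is a nice articulation of the reverse inclusion that the paper simply labels as clear, but it is the identical argument.
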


The following lemma is used below to give bounds on the probability of $\cR(A;g)$ that do not depend on the particular orbit structure $\O(A;g)$.

\begin{lemma} \label{prob:sum_bounds}
    Suppose $A \subset G$ and $g \in G$ with $g \neq e$. Then
    \begin{equation*}
       |A| \leq \sum_{O \in \O(A;g)} |O|
       \quad \text{and} \quad
       \sum_{O \in \O(A;g)} (|O|-1) \le |A|.
    \end{equation*}
    \begin{proof}
        Let $F = A \cup gA$. Since $\O(A;g)$ is a partition of $F$ and $A \subset F$, we clearly have 
        \begin{equation*}
            \sum_{O \in \O(A;g)} |O| = |F| \ge |A|,
        \end{equation*}
        which proves the first inequality in the statement of the lemma.
       
        For the second inequality, note that 
        \begin{equation} \label{prob:sum_bounds:eq1}
            \sum_{O \in \O(A;g)} (|O|-1) = \left(\sum_{O \in \O(A;g)} |O| \right) - |\O(A;g)| = |F| - |\O(A;g)|.
        \end{equation}
       
        Let us now show that if $u,v \in F \setminus g A$ and $u \ne v$, then $u$ is not equivalent to $v$. Suppose for contradiction that $u,v \in F \setminus g A$, $u \ne v$, and $u \sim_g v$. By interchanging the roles of $u$ and $v$ if necessary, we may assume without loss of generality that $v = g^nu$ for some $n \geq 0$ and $g^k u \in A$ for all $0 \le k \le n-1$. Since $u \ne v$, we must have $g^n \ne e$, and therefore $n \ge 1$. Then $g^{n-1} u \in A$, and we have $v = g^n u= g (g^{n-1} u) \in gA$, which contradicts the assumption that $v \notin gA$.
       
        By the previous paragraph, we see that if $u,v \in F \setminus gA$ and $u \ne v$, then $u$ is not equivalent to $v$. As a result, the map from $F \setminus g A$ to $\O(A;g)$ that sends $u$ to the orbit containing $u$ is an injection, and therefore $|F \setminus gA| \le |\O(A;g)|$. Combining this inequality with \eqref{prob:sum_bounds:eq1} gives
        \begin{equation*}
            \sum_{O \in \O(A;g)} (|O|-1) = |F| - |\O(A;g)| \le |F| - |F \setminus gA| = |F \setminus (F \setminus gA)| = |gA| = |A|,
        \end{equation*}
        which completes the proof of the lemma. 
    \end{proof}
\end{lemma}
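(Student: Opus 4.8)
The plan is to handle the two inequalities separately, with the first being immediate and the second requiring a small counting argument based on the orbit structure.

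For the first inequality, I would observe that $\O(A;g)$ is by construction a partition of $F = A \cup gA$, so $\sum_{O \in \O(A;g)} |O| = |F|$. Since $A \subseteq F$, this gives $|A| \le |F| = \sum_{O} |O|$ at once.

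For the second inequality, I would first rewrite the left-hand side by splitting the sum:
\[
    \sum_{O \in \O(A;g)} (|O| - 1) = |F| - |\O(A;g)|.
\]
Thus the claim is equivalent to $|F| - |A| \le |\O(A;g)|$, and since $|gA| = |A|$ with $gA \subseteq F$, this in turn reduces to showing $|F \setminus gA| \le |\O(A;g)|$. The natural way to prove such a cardinality bound is to exhibit an injection from $F \setminus gA$ into $\O(A;g)$, and the obvious candidate is the map sending each element to the orbit that contains it.

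The main step—and the part I expect to be the crux—is verifying that this map is injective, i.e., that distinct elements of $F \setminus gA$ always lie in distinct orbits. Suppose toward a contradiction that $u, v \in F \setminus gA$ are distinct but $u \sim_g v$. Using the chain characterization of $\sim_g$ recorded just before the lemma, I may assume (after possibly swapping $u$ and $v$) that $v = g^n u$ for some $n \ge 0$ with $g^k u \in A$ for all $0 \le k \le n-1$. Since $u \ne v$ forces $g^n \ne e$ and hence $n \ge 1$, the element $g^{n-1} u$ lies in $A$, so $v = g(g^{n-1} u) \in gA$, contradicting $v \notin gA$. This establishes injectivity, whence $|F \setminus gA| \le |\O(A;g)|$. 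Combining this with the displayed identity yields
\[
    \sum_{O \in \O(A;g)} (|O| - 1) = |F| - |\O(A;g)| \le |F| - |F \setminus gA| = |gA| = |A|,
\]
as desired. The only genuine subtlety is correctly reading off the chain structure of $\sim_g$ to locate the final edge of the chain inside $gA$; everything else is bookkeeping.
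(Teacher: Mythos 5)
Your proposal is correct and follows essentially the same route as the paper's proof: the partition identity for the first inequality, the rewriting $\sum_{O}(|O|-1) = |F| - |\O(A;g)|$, and the injection from $F \setminus gA$ into $\O(A;g)$ justified by exactly the same chain-based contradiction argument. No gaps to report.
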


Now we give bounds on $\mu_p \bigl( \cR(A;g) \bigr)$ that depend only on $|A|$ and $\pi_2(p)$. 

\begin{thm} \label{prob:repeat_bounds}
    Let $G$ be a countable group, and let $p$ be a probability vector. Then for any finite $A \subset G$ and any $g \in G$,
    \begin{equation*}
     \pi_2(p)^{|A|} \le \mu_p \bigl( \cR(A;g) \bigr) \le \pi_2(p)^{\frac{1}{2} |A|}.
    \end{equation*}
    \begin{proof}
        Let $G$, $p$, $A$, and $g$ be as above. By Lemma \ref{prob:repeat_prob}, we have
        \begin{equation} \label{prob:repeat_bounds:eq1}
            \mu_p \bigl( \cR(A;g) \bigr) = \prod_{O \in \O(A;g)} \pi_{|O|}(p).
        \end{equation}
        Let us first establish the upper bound. By \eqref{prob:repeat_bounds:eq1} and Lemmas \ref{pvec:pi_upper} and \ref{prob:sum_bounds}, we have
        \begin{align*}
            \mu_p \bigl( \cR(A;g) \bigr)  = \prod_{O \in \O(A;g)} \pi_{|O|}(p) 
            & \le \prod_{O \in \O(A;g)} \pi_{2}(p)^{\frac{1}{2}|O|} \\
            & = \pi_{2}(p)^{\frac{1}{2} \sum_{O \in \O(A;g)} |O|} 
             \leq \pi_{2}(p)^{\frac{1}{2} |A|}. 
        \end{align*}
        Now we establish the lower bound. By \eqref{prob:repeat_bounds:eq1} and Lemmas \ref{pvec:pi_lower} and \ref{prob:sum_bounds}, we have
        \begin{align*}
            \mu_p \bigl( \cR(A;g) \bigr)  = \prod_{O \in \O(A;g)} \pi_{|O|}(p) 
            & \geq \prod_{O \in \O(A;g)} \pi_{2}(p)^{|O|-1} \\
            & = \pi_2(p)^{ \sum_{O \in \O(A;g)} (|O|-1) } 
             \geq \pi_2(p)^{|A|}.
       \end{align*}
       This concludes the proof of the theorem. 
    \end{proof}
\end{thm}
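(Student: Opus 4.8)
The plan is to combine the exact formula for $\mu_p(\cR(A;g))$ from Lemma \ref{prob:repeat_prob} with the pointwise bounds on $\pi_i(p)$ from Lemmas \ref{pvec:pi_upper} and \ref{pvec:pi_lower}, and then to control the resulting exponents using the combinatorial estimates of Lemma \ref{prob:sum_bounds}. Throughout, the single structural fact driving both inequalities is that $\pi_2(p) = \sum_{a} p_a^2 \le \sum_a p_a = 1$, so that $t \mapsto \pi_2(p)^t$ is a non-increasing function of $t$; every inequality on exponents must therefore be read with its direction reversed at the level of the probabilities.

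First I would invoke Lemma \ref{prob:repeat_prob} to write $\mu_p(\cR(A;g)) = \prod_{O \in \O(A;g)} \pi_{|O|}(p)$, reducing the theorem to bounding this product. For the upper bound I would apply Lemma \ref{pvec:pi_upper} factor by factor to obtain $\prod_O \pi_{|O|}(p) \le \prod_O \pi_2(p)^{|O|/2} = \pi_2(p)^{\frac{1}{2} \sum_O |O|}$; since $\sum_O |O| \ge |A|$ by the first inequality of Lemma \ref{prob:sum_bounds} and $\pi_2(p) \le 1$, raising the base to the larger exponent only decreases the value, giving $\pi_2(p)^{\frac{1}{2} \sum_O|O|} \le \pi_2(p)^{|A|/2}$. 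For the lower bound I would instead use Lemma \ref{pvec:pi_lower} to get $\prod_O \pi_{|O|}(p) \ge \pi_2(p)^{\sum_O(|O|-1)}$; the second inequality of Lemma \ref{prob:sum_bounds} gives $\sum_O(|O|-1) \le |A|$, and again by monotonicity (now in the favorable direction) this yields $\pi_2(p)^{\sum_O(|O|-1)} \ge \pi_2(p)^{|A|}$.

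The step I expect to require the most care is the applicability of the pointwise estimate in Lemma \ref{pvec:pi_upper}, which is only asserted for $i \ge 2$. To use it on every factor I must verify that every orbit $O \in \O(A;g)$ has $|O| \ge 2$, and this is exactly the point at which the hypothesis $g \neq e$ should enter: since left multiplication by $g \neq e$ is fixed-point-free, any $u \in A$ satisfies $u \prec_g gu$ with $gu \neq u$, and any $u \in gA \setminus A$ has a predecessor $g^{-1}u \in A$ distinct from $u$, so no orbit can be a singleton. (The degenerate case $g = e$, in which $\cR(A;g)$ is the whole space and the upper bound is vacuous unless $A = \emptyset$, falls outside Lemma \ref{prob:sum_bounds} and would be handled or excluded separately.) Once the orbit sizes are confirmed, the remaining work is purely bookkeeping: tracking the exponent $\frac{1}{2}\sum_O|O|$ against $\sum_O(|O|-1)$ and applying the monotonicity of $t\mapsto\pi_2(p)^t$ consistently in each direction.
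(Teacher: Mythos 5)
Your proposal follows the paper's proof essentially verbatim: the same reduction via Lemma \ref{prob:repeat_prob}, the same factor-by-factor application of Lemmas \ref{pvec:pi_upper} and \ref{pvec:pi_lower}, and the same use of the two inequalities of Lemma \ref{prob:sum_bounds} together with the monotonicity of $t \mapsto \pi_2(p)^t$. Your extra care is in fact a refinement of the paper's argument: you verify that every orbit has size at least $2$ when $g \neq e$ (which is what licenses the pointwise lemmas, stated only for $i \ge 2$), and you correctly flag that for $g = e$ the event $\cR(A;e)$ is all of $\cA^G$, so the upper bound genuinely fails for nonempty $A$ --- a case the paper's statement (``any $g \in G$'') quietly includes even though its proof relies on Lemma \ref{prob:sum_bounds}, whose hypothesis is $g \neq e$.
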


In our proofs in Section \ref{sect:neg}, we also require estimates regarding the probability that a single pattern is repeated multiple times on disjoint shapes. Toward that end, we extend our original notation. Let $A \subset G$ be a finite set, let $g_1,\dots,g_n \in G$ with $g_1 = e$, and let $F = \cup_{k=1}^n g_k A$. A pattern $w \in \cA^F$ has an $(A; g_2,\dots,g_n)$-repeat if for each $h \in A$, there is a symbol $a_h$ such that $w(g_k h) = a_h$ for all $k = 1,\dots,n$. Now let $\cR(A; g_2,\dots,g_n)$ denote the event that $x \in \cA^G$ has an $(A; g_2,\dots,g_n)$-repeat, i.e.,
\begin{equation*}
    \cR(A; g_2,\dots,g_n) = \bigcup_{ \substack{ w \in \cA^F \\ w \text{ has an $(A; g_2,\dots,g_n)$-repeat}}} [w].
\end{equation*}

We remark that it is possible to give an exact expression for the probability of this event regardless of the choice of $g_2, \dots, g_n$; however, we do not require such a general result. We only require an exact probability when the collection $\{ g_k A \}_{k=1}^n$ is mutually disjoint, which is far easier to establish. The following theorem gives our second main result for the section. 

\begin{thm} \label{prob:n_disj_repeat_prob}
    Suppose $A \subset G$ is finite, $g_1,\dots, g_n \in G$ with $g_1 = e$, and the collection $\{ g_k A \}_{k=1}^n$ is mutually disjoint. Then
    \begin{equation*}
        \mu_{p} \bigl( \cR(A; g_2,\dots,g_n) \bigr) = \pi_n(p)^{|A|}.
    \end{equation*}
    \begin{proof}
        Let $A$ and $g_1,\dots,g_n$ be as in the statement of the theorem. Let $F = \cup_{k=1}^n g_k A$. For each $h \in A$, let $O(h) = \{ g_k h : 1 \leq k \leq n\}$, and let $\O(A; g_2,\dots,g_n) = \{ O(h) : h \in A\}$. Since $\{g_k A\}_{k=1}$ is mutually disjoint, we have that $|O(h)| = n$ for each $h \in A$, and furthermore $\O(A; g_2,\dots,g_n)$ is a partition of $F$. For $O \in \O(A; g_2,\dots,g_n)$ and $a \in \cA$, let $E(O,a) = \{ x \in \cA^G : \forall h \in O, \, x(h) = a\}$. Next let $E(O) = \cup_{a \in \cA} E(O,a)$. Then
        \begin{equation*}
            \mu_{p} \bigl( E(O,a) \bigr) = (p_a)^{|O|} = (p_a)^n,
        \end{equation*}
        and therefore
        \begin{equation*}
            \mu_p \bigl( E(O) \bigr) = \sum_{a \in \cA} \mu_p \bigl( E(O,a) \bigr) = \sum_{a \in \cA} (p_a)^n = \pi_n(p). 
        \end{equation*}
        Since each map $h \mapsto g_k h$ is a bijection from $A$ to $g_kA$, we also have that the map $h \mapsto O(h)$ is a bijection from $A$ to $\O(A; g_2,\dots, g_n)$. Hence $|\O(A; g_2, \dots, g_n)| = |A|$. Additionally, note that
        \begin{equation*}
            \cR(A; g_2,\dots, g_n) = \bigcap_{O \in \O(A; g_2,\dots,g_n) }E(O).
        \end{equation*}
        Combining all of these facts, we see that
        \begin{align*}
            \mu_p \bigl( \cR(A; g_2,\dots,g_n) \bigr) & = \mu_p \left( \bigcap_{O \in \O(A; g_2,\dots,g_n)} E(O) \right) \\
            & = \prod_{O \in \O(A; g_2,\dots,g_n)} \mu_p \bigl( E(O) \bigr) \\
            & = \prod_{O \in \O(A; g_2,\dots,g_n)} \pi_n(p) \\
            & = \pi_n(p)^{|\O(A;g_2,\dots,g_n)|} \\
            & = \pi_n(p)^{|A|},
        \end{align*}
        as desired.
    \end{proof}
\end{thm}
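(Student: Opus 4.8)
The plan is to mirror the structure of the proof of Lemma \ref{prob:repeat_prob}, exploiting the fact that mutual disjointness forces every orbit to have the common size $n$. Set $F = \bigcup_{k=1}^n g_k A$, and for each $h \in A$ define the orbit $O(h) = \{ g_k h : 1 \le k \le n\}$.

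First I would pin down the combinatorial structure of these orbits, which is where the disjointness hypothesis does all the work. Because the sets $\{g_k A\}_{k=1}^n$ are mutually disjoint and $g_k h \in g_k A$, the $n$ elements $g_1 h, \dots, g_n h$ lie in distinct members of a disjoint family and are therefore distinct; hence $|O(h)| = n$ for every $h \in A$. Moreover, if $O(h) \cap O(h') \neq \emptyset$, then $g_j h = g_k h'$ for some $j, k$, which forces $j = k$ (again by disjointness) and then $h = h'$, so distinct orbits are disjoint. Since every element of $F$ has the form $g_k h \in O(h)$, the family $\O(A; g_2, \dots, g_n) = \{O(h) : h \in A\}$ is a partition of $F$. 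Finally, because $g_1 = e$ we have $h \in O(h)$, so $h \mapsto O(h)$ is injective and there are exactly $|A|$ orbits.

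Next I would reduce the event to a statement about orbits. By the definition of an $(A; g_2, \dots, g_n)$-repeat, a pattern $w \in \cA^F$ lies in $\cR(A; g_2, \dots, g_n)$ if and only if $w$ is constant on each orbit $O(h)$; that is, $\cR(A; g_2, \dots, g_n) = \bigcap_{O} E(O)$, where $E(O)$ denotes the event that $w$ takes a single (arbitrary) symbol on all of $O$. For a fixed orbit $O$, writing $E(O) = \bigcup_{a \in \cA} E(O,a)$ as a disjoint union over the common symbol and applying \eqref{prob:fact1} yields $\mu_p(E(O)) = \sum_{a \in \cA} (p_a)^{|O|} = \pi_n(p)$, using $|O| = n$.

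The final step is to combine these using independence. Since the orbits are pairwise disjoint subsets of $G$ and each $E(O)$ lies in $\cS(O)$, the events $\{E(O)\}$ are mutually independent by \eqref{prob:fact2}, so
\begin{equation*}
    \mu_p \bigl( \cR(A; g_2, \dots, g_n) \bigr) = \prod_{O \in \O(A; g_2, \dots, g_n)} \mu_p \bigl( E(O) \bigr) = \pi_n(p)^{|A|},
\end{equation*}
as the number of orbits is $|A|$. The only delicate point is the partition count, which I expect to be the main (though mild) obstacle: it is precisely the mutual disjointness hypothesis that guarantees both $|O(h)| = n$ and the bijection $h \mapsto O(h)$. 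This is exactly the structure that fails in the general non-disjoint setting, where orbits can merge or shrink, and it is why the clean closed form $\pi_n(p)^{|A|}$ holds only under the disjointness assumption.
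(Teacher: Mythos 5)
Your proposal is correct and follows essentially the same route as the paper's proof: the same orbit decomposition $O(h) = \{g_k h\}$, the same events $E(O)$ with $\mu_p(E(O)) = \pi_n(p)$, and the same use of disjointness to get both the partition/bijection count and the independence via \eqref{prob:fact2}. In fact you spell out the pairwise disjointness of orbits and the injectivity of $h \mapsto O(h)$ slightly more explicitly than the paper does, but the argument is identical in substance.
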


We note that Theorems \ref{prob:repeat_bounds} and \ref{prob:n_disj_repeat_prob} exactly recover Lemma 11 in \cite{motahari} for the case of DNA sequences, and similar results appear in \cite{mossel} in the case of $d$-dimensional integer lattices. 

\section{Identifiability results} \label{sect:pos}

In this section we aim to prove Theorem \ref{pos:regime}, which establishes sufficient conditions for identifiability a.a.s. The general idea is that if the reads can be organized into a connected graph with sufficiently large and regular overlapping regions between reads connected by an edge, then identifiability should occur with high probability.
In fact, we show that under the hypotheses of the theorem, the overlapping regions all have unique labels with high probability and this unique labeling condition implies identifiability. In the next section, we establish some combinatorial lemmas, including Lemma \ref{pos:id_lem}, which shows that the unique labeling condition implies identifiability. In Section \ref{Sect:ProbIdBelow} we find a lower bound on the probability of identifiability by estimating the probability of satisfying the unique labeling condition. Finally, we use these results to prove Theorem \ref{pos:regime} in Section \ref{Sect:ProofPositive}.

\subsection{Combinatorial results for identifiability} \label{Sect:CombIdent}

To begin our analysis, we  recall the definition of an overlap graph.
\begin{defn}
    Let $G$ be a group, and let $C,K \subset G$ and $\bF \subset \cP(K)$. The \textit{overlap graph} $\cO = \cO(G,C,K,\bF)$ is the undirected graph with vertex set $V(\cO) = C$ and edge set
    \begin{equation*}
        E(\cO) = \Bigl\{(c_1, c_2) \in C^2 : \exists g \in G, \exists F \in \bF, gF \subset c_1K \cap c_2K \Bigr\}.
    \end{equation*}
\end{defn}

The following lemma formalizes an intuitive property of overlap graphs: weakening the requirements for including edges cannot disconnect an overlap graph. We use it in the proof of Theorem \ref{pos:regime}.

\begin{lemma} \label{pos:sub-overlap_connected}
    Let $G$ be a group, let $C, K \subset G$, 
    let $\bF \subset \cP(K)$ and suppose that 
    is connected. If $\bF' \subset \cP(K)$ and $\forall F \in \bF$, $\exists F' \in \bF'$ such that $F' \subset F$, then $\cO(G,C,K,\bF')$ is connected.
    \begin{proof}
        Let $\cO' = \cO(G,C,K,\bF')$, and consider some edge $(c_1, c_2) \in E(\cO)$. Then $\exists g \in G$ and $\exists F \in \bF$ such that $gF \subset c_1K \cap c_2K$. We also have by assumption that $\exists F' \in \bF'$ such that $F' \subset F$, and therefore $gF' \subset gF$ and $gF' \subset c_1K \cap c_2K$. Hence $(c_1, c_2) \in E(\cO')$. Therefore, $\cO$ is a subgraph of $\cO'$ containing all the vertices. Since $\cO$ is connected, $\cO'$ is connected.
    \end{proof}
\end{lemma}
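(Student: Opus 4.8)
The plan is to show that $\cO = \cO(G,C,K,\bF)$ is a \emph{spanning} subgraph of $\cO' = \cO(G,C,K,\bF')$: the two graphs share the common vertex set $C$, and every edge of $\cO$ is also an edge of $\cO'$. Once this is established, connectivity of $\cO'$ is immediate. Indeed, a connected spanning subgraph joins any two vertices of $C$ by a path, and the same sequence of edges is a path in the larger graph $\cO'$; hence $\cO'$ is connected.

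To compare the edge sets, I would fix an arbitrary edge $(c_1,c_2) \in E(\cO)$. By the definition of the overlap graph, there exist $g \in G$ and $F \in \bF$ with $gF \subset c_1 K \cap c_2 K$. The hypothesis then furnishes some $F' \in \bF'$ with $F' \subset F$. The single computation required is that left translation preserves set containment, so $F' \subset F$ gives $gF' \subset gF$; chaining this with $gF \subset c_1 K \cap c_2 K$ and transitivity of $\subset$ yields $gF' \subset c_1 K \cap c_2 K$. This exhibits a group element $g$ and an overlap shape $F' \in \bF'$ that witness the edge $(c_1,c_2)$ in $\cO'$, so $(c_1,c_2) \in E(\cO')$.

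Since $(c_1,c_2)$ was arbitrary, $E(\cO) \subset E(\cO')$, and both graphs have vertex set $C$ by construction; thus $\cO$ is a connected spanning subgraph of $\cO'$, and the conclusion follows from the reasoning of the first paragraph.

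I do not anticipate any genuine obstacle: the statement is in essence a monotonicity principle, formalizing the intuition that demanding only a \emph{smaller} overlap shape to fit inside the intersection $c_1 K \cap c_2 K$ can only add edges, never remove them. The one point meriting care is the translation step $F' \subset F \implies gF' \subset gF$, which relies on $x \mapsto gx$ being a bijection of $G$ (so that containment is respected under left multiplication); the rest is routine unwinding of the existential definition of an edge.
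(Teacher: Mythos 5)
Your proposal is correct and follows essentially the same argument as the paper: take an edge of $\cO$ witnessed by $g$ and $F$, replace $F$ by the smaller $F' \in \bF'$ to witness the same edge in $\cO'$, and conclude that $\cO$ is a connected spanning subgraph of $\cO'$. The extra care you take with the translation step and the spanning-subgraph reasoning is sound but does not change the route.
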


The next lemma provides a sufficient condition for identifiability. Informally, it states that if a pattern has unique labels on all overlap regions, then the pattern is identifiable. We note that an analogous lemma appears as \cite[Lemma 2.4]{mossel} in the setting of graphs.

\begin{lemma} \label{pos:id_lem}
    Let $G$ be a group, let $C, K \subset G$, and suppose there exists $\bF \subset \cP(K)$ such that $\varnothing \notin \bF$ and $\cO(G,C,K,\bF)$ is connected. Suppose that $x \in \cA^G$ has unique patterns on all translates of each $F \in \bF$ contained in $CK$, i.e., $\forall g,h \in G$, $\forall F \in \bF$, if $gF, hF \subset CK$ and $\sigma^g(x)(F) = \sigma^h(x)(F)$, then $g = h$. Then $x(CK)$ is $(C,K)$-identifiable, i.e., $x \in \cI_{C,K}$.
    \begin{proof}
        For simplicity, let $\cO = \cO(G,C,K,\bF)$ and let $w = x(CK)$, and then consider some $w' \in \cA^{CK}$ such that $\cR_{C,K}(w) = \cR_{C,K}(w')$. By Lemma \ref{read_eq}, there exists a permutation $\phi$ of $C$ such that $\forall c \in C, k \in K$,
        \begin{equation*}
            \sigma^c(w)(k) = \sigma^{\phi(c)}(w')(k).
        \end{equation*}
        Take any $c_0 \in C$, and let $g_1 = \phi(c_0)c_0^{-1}$. Now define 
        \begin{equation*}
            T = \bigl\{c \in C : \phi(c) = g_1c\bigr\}.
        \end{equation*}
        Note that $c_0 \in T$, since $\phi(c_0) = \phi(c_0)c_0^{-1}c_0 = g_1c_0$, and thus $T$ is non-empty. Also $T \subset C$. We wish to show that $\phi(c) = g_1c$ for all $c \in C$, or in other words, $T = C$. First we establish the following statement.
        \begin{claim*}
            Let $t \in T$ and $c \in C$, and suppose that 
            $(\phi(t), \phi(c)) \in E(\cO)$. Then $c \in T$.
            
            We now prove the claim.  Let $t \in T$ and $c \in C$, and suppose that $(\phi(t),\phi(c)) \in E(\cO)$. By definition, we have that $\phi(t) = g_1t$, and letting $g_2 = \phi(c)c^{-1}g_1^{-1}$, we have $\phi(c) = g_2g_1c$. We aim now to show that $g_2 = e$. Since $(\phi(t),\phi(c)) \in E(\cO)$, there exists $h \in G$ and $F \in \bF$ such that
            \begin{equation*}
                hF \subset \phi(t)K \cap \phi(c)K.
            \end{equation*}
            Hence $w'(hF)$ appears in both $w'(\phi(t)K)$ and $w'(\phi(c)K)$.
            
            First, we wish to show that the pattern $w'(hF)$ appears in the read $w(tK)$, which we do using the fact that $hF \subset \phi(t)K$. In particular, we will show that $g_1^{-1}hF \subset tK$, and $w(g_1^{-1}hF)$ is the same pattern as $w'(hF)$. Taking some $f \in F$, we have $hf \in hF \subset \phi(t)K$, so we can find some $k_1 \in K$ such that $hf = \phi(t)k_1$, which gives
            \begin{equation*}
                w'(hf) = w'(\phi(t)k_1) = \sigma^{\phi(t)}(w')(k_1).
            \end{equation*}
            Next, since $t \in C$ and $k_1 \in K$, we have that $\sigma^t(w)(k_1) = \sigma^{\phi(t)}(w')(k_1)$, and combining this fact with the previous display yields
            \begin{equation*}
                w'(hf) = \sigma^{\phi(t)}(w')(k_1) = \sigma^{t}(w)(k_1) = w(tk_1).
            \end{equation*}
            Since $tk_1 = g_1^{-1}g_1tk_1 = g_1^{-1}\phi(t)k_1 = g_1^{-1}hf$, we obtain that $w'(hf) = w(t k_1) = w(g_1^{-1}hf)$. Since $f \in F$ was arbitrary, we have shown that $\forall f \in F$, 
            \begin{equation*}
                w'(hf) = w(g_1^{-1}hf),
            \end{equation*}
            and thus $\sigma^h(w')(F) = \sigma^{g_1^{-1}h}(w)(F)$. 
            
            Now we wish to show that the pattern $w'(hF)$ appears in the pattern $w(cK)$, which we do using the fact that $hF \subset \phi(c)K$. In particular, we will show that $g_1^{-1}g_2^{-1}hF \subset cK$, and $w(g_1^{-1}g_2^{-1}hF)$ is the same pattern as $w'(hF)$. Taking some $f \in F$, we have $hf \in hF \subset \phi(c)K$, so we can find some $k_2 \in K$ such that $hf = \phi(c)k_2$, which gives
            \begin{equation*}
                w'(hf) = w'(\phi(c)k_2) = \sigma^{\phi(c)}(w')(k_2).
            \end{equation*}
            Next, since $c \in C$ and $k_2 \in K$, we have that $\sigma^c(w)(k_2) = \sigma^{\phi(c)}(w')(k_2)$, and combining this fact with the previous display gives
            \begin{equation*}
                w'(hf) = \sigma^{\phi(c)}(w')(k_2) = \sigma^{c}(w)(k_2) = w(ck_2).
            \end{equation*}
            Since $ck_2 = (g_2g_1)^{-1}g_2g_1ck_2 = g_1^{-1}g_2^{-1}\phi(c)k_2 =  g_1^{-1}g_2^{-1}hf$, we obtain that $w'(hf) = w(ck_2) = w(g_1^{-1} g_2^{-1} hf)$. Since $f \in F$ was arbitrary, we have shown that $\forall f \in F$,
            \begin{equation*}
                w'(hf) = w(g_1^{-1}g_2^{-1}hf),
            \end{equation*}
            and thus $\sigma^h(w')(F) = \sigma^{g_1^{-1}g_2^{-1}h}(w)(F)$. 
            
            By the conclusions of the two previous paragraphs, we have that
            \begin{equation} \label{eqn:computermen}
                \sigma^{g_1^{-1}h}(w)(F) = \sigma^h(w')(F) = \sigma^{g_1^{-1}g_2^{-1}h}(w)(F).
            \end{equation}
            Since $hF \subset \phi(t)K = g_1tK$, we have $g_1^{-1}hF \subset tK \subset CK$, and since $hF \subset \phi(c)K = g_2g_1cK$, we have $g_1^{-1}g_2^{-1}hF \subset cK \subset CK$. Then by the uniqueness property assumed in the lemma and \eqref{eqn:computermen}, we obtain that
            \begin{equation*}
                g_1^{-1}h = g_1^{-1}g_2^{-1}h.
            \end{equation*}
            After cancelling like terms, we see that $g_2 = e$. 
            As a result, we have
            \begin{equation*}
                \phi(c) = g_2g_1c = g_1c,
            \end{equation*}
            and therefore $c \in T$. This concludes the proof of the claim.
        \end{claim*}
        
        Now consider any $c \in C$. Since $\cO$ is connected, there must be some path between $\phi(c_0)$ and $\phi(c)$, with edges $(\phi(c_0), \phi(c_1))$, $(\phi(c_1), \phi(c_2))$, \dots, $(\phi(c_n), \phi(c))$, where $c_1, \dots, c_n \in C$. Since $c_0 \in T$ and $(\phi(c_0),\phi(c_1)) \in E(\cO)$, the claim yields that $c_1 \in T$. Then, by applying the claim inductively, we conclude that $c_2, c_3, \dots, c_n \in T$, and finally that $c \in T$. Thus, $C \subset T$, and therefore we obtain that $T = C$ and $\phi(C) = g_1C$.
        
        Since $\phi$ is a permutation, we also know that $\phi(C) = C$, and so $C = g_1C$. Now let $h \in CK$, and choose $c \in C$ and $k \in K$ such that $h = ck$. By the definition of $\phi$ and the fact that $\phi(c) = g_1 c$ for all $c \in C$, we see that for any $c \in C$ and $k \in K$, we must have $\sigma^c(w)(k) = \sigma^{\phi(c)}(w')(k) = \sigma^{g_1c}(w')(k)$, and then
        \begin{equation*}
            w(h) = w(ck) = \sigma^c(w)(k) = \sigma^{g_1c}(w')(k) = \sigma^{g_1}(w')(ck) = \sigma^{g_1}(w')(h).
        \end{equation*}
        Since $h \in CK$ was arbitrary, we conclude that $\sigma^{g_1}(w') = w$, and thus
        \begin{equation*}
            w' \in \cC(w).
        \end{equation*}
        Since $w' \in \cR_{C,K}^{-1}\bigl(\cR_{C,K}(w)\bigr)$ 
        was arbitrary, we have shown that
        \begin{equation*}
            \cR_{C,K}^{-1}\bigl(\cR_{C,K}(w)\bigr) \subset \cC(w).
        \end{equation*}
        Finally, by Lemma \ref{id_char}, we conclude that $w = x(CK)$ is identifiable, and thus $x \in \cI_{C,K}$.
    \end{proof}
\end{lemma}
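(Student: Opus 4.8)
The plan is to invoke Lemma \ref{id_char}, which reduces identifiability of $w := x(CK)$ to establishing the single containment $\cR_{C,K}^{-1}\bigl(\cR_{C,K}(w)\bigr) \subset \cC(w)$. So I would take an arbitrary $w' \in \cA^{CK}$ with $\cR_{C,K}(w') = \cR_{C,K}(w)$ and aim to produce a single group element $g_1$ satisfying $g_1 C = C$ and $\sigma^{g_1}(w') = w$. By Lemma \ref{read_eq}, the read-equality furnishes a permutation $\phi$ of $C$ matching the read centered at $c$ in $w$ with the read centered at $\phi(c)$ in $w'$. The entire argument then amounts to showing that this a priori arbitrary permutation $\phi$ is in fact a single left translation restricted to $C$.

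To that end I would fix a basepoint $c_0 \in C$, set $g_1 = \phi(c_0)c_0^{-1}$, and define the ``good set'' $T = \{c \in C : \phi(c) = g_1 c\}$, noting $c_0 \in T$ trivially. The heart of the proof is a propagation claim: if $t \in T$, $c \in C$, and $(\phi(t),\phi(c))$ is an edge of $\cO := \cO(G,C,K,\bF)$, then $c \in T$. Granting this claim, connectivity of $\cO$ lets me walk along a path from $\phi(c_0)$ to $\phi(c)$ for any target $c$ and conclude inductively that $c \in T$; hence $T = C$ and $\phi(c) = g_1 c$ throughout. Since $\phi$ permutes $C$, this forces $g_1 C = \phi(C) = C$, and a direct computation shows $w(ck) = \sigma^{g_1}(w')(ck)$ for every $c \in C$ and $k \in K$, giving $\sigma^{g_1}(w') = w$ and thus $w' \in \cC(w)$, as required.

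The main obstacle is the propagation claim, and the key device is the uniqueness hypothesis on $x$. Writing $g_2 = \phi(c)c^{-1}g_1^{-1}$ so that $\phi(c) = g_2 g_1 c$, I must show $g_2 = e$. The edge $(\phi(t),\phi(c))$ supplies some $h \in G$ and $F \in \bF$ with $hF \subset \phi(t)K \cap \phi(c)K$, so the subpattern $w'(hF)$ occurs inside both the read at $\phi(t)$ and the read at $\phi(c)$ of $w'$. The plan is to trace this common subpattern back into $w$ along two routes, using the read-matching between $w$ and $w'$. Tracing through the read at $\phi(t) = g_1 t$ should yield $\sigma^{h}(w')(F) = \sigma^{g_1^{-1}h}(w)(F)$, while tracing through the read at $\phi(c) = g_2 g_1 c$ should yield $\sigma^{h}(w')(F) = \sigma^{g_1^{-1}g_2^{-1}h}(w)(F)$. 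Equating gives $\sigma^{g_1^{-1}h}(w)(F) = \sigma^{g_1^{-1}g_2^{-1}h}(w)(F)$, and since $g_1^{-1}hF \subset tK$ and $g_1^{-1}g_2^{-1}hF \subset cK$ both lie in $CK$, the uniqueness hypothesis forces $g_1^{-1}h = g_1^{-1}g_2^{-1}h$, i.e. $g_2 = e$.

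The delicate bookkeeping lives entirely in those two tracing steps. Each requires choosing, for an arbitrary $f \in F$, an element $k \in K$ with $hf$ equal to the relevant center times $k$ (so $hf = \phi(t)k_1$ in the first route and $hf = \phi(c)k_2$ in the second), then applying the read-matching identity $\sigma^{t}(w)(k_1) = \sigma^{\phi(t)}(w')(k_1)$ (respectively for $c$) and shifting the index by $g_1^{-1}$ (respectively $g_1^{-1}g_2^{-1}$) to rewrite $w'(hf)$ as a value of $w$ at a point landing back inside $CK$. I expect the routine part to be verifying these index identities and set inclusions, while the conceptual content is simply that the uniqueness of $F$-labelings converts the two translated occurrences of one subpattern into the equation $g_2 = e$.
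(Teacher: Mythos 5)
Your proposal follows the paper's own proof essentially step for step: the same reduction via Lemma \ref{id_char} and Lemma \ref{read_eq}, the same definition of $g_1 = \phi(c_0)c_0^{-1}$ and the good set $T$, the same propagation claim proved by tracing the common subpattern $w'(hF)$ through both reads to get $\sigma^{g_1^{-1}h}(w)(F) = \sigma^{g_1^{-1}g_2^{-1}h}(w)(F)$ and invoking the uniqueness hypothesis to force $g_2 = e$, followed by the same connectivity induction and final translation identity. The bookkeeping you defer (the index identities and the inclusions $g_1^{-1}hF \subset tK$, $g_1^{-1}g_2^{-1}hF \subset cK$) is exactly what the paper verifies, and it goes through as you expect.
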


The following elementary lemma provides a characterization of when two translates of a set overlap.

\begin{lemma} \label{pos:trans_overlap}
    Let $G$ be a group, let $g,h \in G$, and let $A \subset G$. Then
    \begin{equation*}
        gA \cap hA \ne \varnothing \iff g \in hAA^{-1}
    \end{equation*}
    \begin{proof}
        Assuming $gA \cap hA$ is nonempty, we have some $c \in gA$ and $c \in hA$. Then there are some $a_1, a_2 \in A$ such that $c = ga_1 = ha_2$, and therefore $g = ha_2a_1^{-1}$. Hence $g \in hAA^{-1}$.
        
        Assuming $g \in hAA^{-1}$, we have some $a_1, a_2 \in A$ such that $g = ha_2a_1^{-1}$. Thus, we have that $ga_1 = ha_2$. Letting $x = ga_1 = ha_2$, we clearly have that $x = ga_1 \in gA$ and $x = ha_2 \in hA$, and therefore $gA \cap hA \ne \varnothing$. 
    \end{proof}
\end{lemma}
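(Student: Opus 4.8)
The plan is to prove both implications of the biconditional by directly unwinding the definitions of the translate $gA$ and the product set $AA^{-1}$. The key observation is that a nonempty overlap $gA \cap hA$ is equivalent to the existence of elements $a_1, a_2 \in A$ satisfying the single equation $ga_1 = ha_2$, and that this equation rearranges (using the existence of inverses in $G$) to $g = h a_2 a_1^{-1}$. The latter is precisely the assertion $g \in hAA^{-1}$, because $a_2 a_1^{-1}$ ranges over all of $AA^{-1}$ as $a_1$ and $a_2$ range over $A$. Thus the whole lemma reduces to transporting information through this one algebraic identity.

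For the forward direction, I would assume $gA \cap hA \ne \varnothing$ and select a common element $c$ of the two sets. Writing $c = ga_1$ and $c = ha_2$ for suitable $a_1, a_2 \in A$ and solving for $g$ yields $g = ha_2a_1^{-1}$, and since $a_2 a_1^{-1} \in AA^{-1}$ this gives $g \in hAA^{-1}$. For the reverse direction, I would assume $g \in hAA^{-1}$, so $g = h a_2 a_1^{-1}$ for some $a_1, a_2 \in A$. Right-multiplying by $a_1$ produces $ga_1 = ha_2$, and setting $x = ga_1 = ha_2$ exhibits a single element lying in both $gA$ and $hA$, which witnesses that the intersection is nonempty.

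There is no substantive obstacle here; the entire content is the elementary manipulation $ga_1 = ha_2 \iff g = ha_2 a_1^{-1}$, which is valid in an arbitrary group. The only point requiring care is that $G$ is not assumed abelian, so one must preserve the order of the factors and keep the combination $a_2 a_1^{-1}$ (rather than $a_1^{-1} a_2$), so that left-multiplication by $h$ aligns correctly with the left-translate structure defining $gA$ and $hA$.
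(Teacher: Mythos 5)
Your proposal is correct and follows exactly the same route as the paper's proof: extract a common element $c = ga_1 = ha_2$ and rearrange to $g = ha_2a_1^{-1}$ for the forward direction, and reverse the manipulation to exhibit the witness $x = ga_1 = ha_2$ for the converse. Your added caution about preserving the factor order $a_2a_1^{-1}$ in the non-abelian setting is apt, though it is implicit in the paper's argument as well.
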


\subsection{Bounding the probability of identifiability from below} \label{Sect:ProbIdBelow}

Now we bound the probability of identifiability from below using the sufficient condition given in Lemma \ref{pos:id_lem}. The basic idea is to estimate the probability of having unique patterns on all translates of the overlap shapes $F \in \bF$. Here we use the estimates on the probability of repeats derived in Section \ref{sect:prob}.

\begin{lemma} \label{pos:id_bound}
    Let $G$ be a group, let $C, K \subset G$, let $\cA$ be a finite alphabet, let $p$ be a probability vector on $\cA$, and let $\mu_p$ be the product measure $p^G$ on $\cA^G$. If there exists $\bF \subset \cP(K)$ such that $\varnothing \notin \bF$ and $\cO(G,C,K,\bF)$ is connected, then
    \begin{equation*}
        \mu_p(\cI_{C,K}) \ge 1 - \sum_{F \in \bF} \left[ |CK|^2\pi_2(p)^{|F|} + |CK||FF^{-1}|\pi_2(p)^{\frac{1}{2}|F|} \right].
    \end{equation*}
    \begin{proof}
        For any $F \in \bF$, define the set
        \begin{equation*}
            T(F) = \{g \in G : gF \subset CK\}.
        \end{equation*}
        Then, for any two-element subset $\{a, b\}$ of $T(F)$, we define an indicator random variable $\1^F_{a,b}$ on the probability space $(\cA^G,\mu_p)$ by the rule
        \begin{equation*}
            \1^F_{a,b}(x) = \begin{cases}
                1, & x \in \cR(aF; ba^{-1}) \\
                0, & \text{otherwise}.
            \end{cases}
        \end{equation*}
        Recall that the event $\cR(aF; ba^{-1})$ contains all patterns $x$ for which $\sigma^a(x)(F) = \sigma^b(x)(F)$. Now, letting $T^*(F)$ denote the set of two-element subsets of $T(F)$ (making $|T^*(F)| = \binom{|T(F)|}{2}$), we define the random variable $X$ by
        \begin{equation*} 
            X = \sum_{F \in \bF} \sum_{\{a,b\} \in T^*(F)} \1^F_{a,b}.
        \end{equation*}
        Note that $X$ counts the number of translates of any $F \in \bF$ in $CK$ that are labeled identically. In particular, if $X = 0$, then all translates of each $F \in \bF$ in $CK$ are uniquely labeled (as in the hypotheses of Lemma \ref{pos:id_lem}). Thus, the additional hypothesis that the overlap graph is connected gives that $(X = 0) \subset \cI_{C,K}$ by Lemma \ref{pos:id_lem}. Hence, by monotonicity, we have
        \begin{equation*} 
            \mu_p(\cI_{C,K}) \ge \mu_p(X = 0) = 1 - \mu_p(X \ne 0).
        \end{equation*}
        Since $X$ takes the value of a non-negative integer, the event that $X \ne 0$ is the same as the event that $X \ge 1$. Then, applying Markov's Inequality, we get
        \begin{equation} \label{eqn:raven}
            \mu_p(\cI_{C,K}) \ge 1 - \mu_p(X \ge 1) \ge 1 - \E[X].
        \end{equation}
        Let us now focus on bounding $\E[X]$ from above. We start by noting that
        \begin{equation*}
            \E[X] = \sum_{F \in \bF} \sum_{\{a,b\} \in T^*(F)} \E\left[\1^F_{a,b}\right] = \sum_{F \in \bF} \sum_{\{a,b\} \in T^*(F)} \mu_p\bigl(\cR(aF; ba^{-1})\bigr).
        \end{equation*}
        
        Next we partition $T^*(F)$ into $T^*_D(F)$ and $T^*_I(F)$, the disjoint and intersecting sets respectively, defined as
        \begin{equation*}
            T^*_D(F) = \Bigl\{\{a,b\} \in T^*(F) : aF \cap bF = \varnothing \Bigr\} \quad \text{and} \quad T^*_I(F) = T^*(F) \setminus T^*_D(F).
        \end{equation*}
        
        With this partition of $T^*(F)$, we can rewrite $\E[X]$ as
        \begin{align*} 
            \E[X] &= \sum_{F \in \bF} \sum_{\{a,b\} \in T^*(F)} \mu_p\bigl(\cR(aF;ba^{-1})\bigr)\\
            &= \sum_{F \in \bF} \left[ \sum_{\{a,b\}  \in T^*_D(F)} \mu_p\bigl(\cR(aF; ba^{-1})\bigr) + \sum_{\{a,b\} \in T^*_I(F)} \mu_p\bigl(\cR(aF;ba^{-1})\bigr) \right].
        \end{align*}
        For any $\{a,b\} \in T^*_D(F)$, since $aF \cap bF = \varnothing$, Theorem \ref{prob:n_disj_repeat_prob} gives that
        \begin{equation*}
            \mu_p\bigl(\cR(aF;ba^{-1})\bigr) = \pi_2(p)^{|aF|} = \pi_2(p)^{|F|}.
        \end{equation*}
        For any $\{a,b\} \in T^*_I(F)$, Theorem \ref{prob:repeat_bounds} gives that
        \begin{equation*}
            \mu_p\bigl(\cR(aF;ba^{-1})\bigr) \le \pi_2(p)^{\frac{1}{2}|aF|} = \pi_2(p)^{\frac{1}{2}|F|}.
        \end{equation*}
        By combining the three previous displays and simplifying, we see that
        \begin{align} \begin{split} \label{eqn:walkman}
            \E[X] &= \sum_{F \in \bF} \left[ \sum_{\{a,b\} \in T^*_D(F)} \mu_p\bigl(\cR(aF;ba^{-1})\bigr) + \sum_{\{a,b\} \in T^*_I(F)} \mu_p\bigl(\cR(aF;ba^{-1})\bigr) \right] \\
            &\le \sum_{F \in \bF} \left[ \sum_{\{a,b\} \in T^*_D(F)} \pi_2(p)^{|F|} + \sum_{\{a,b\} \in T^*_I(F)} \pi_2(p)^{\frac{1}{2}|F|} \right] \\
            &= \sum_{F \in \bF} \left[ |T^*_D(F)|\pi_2(p)^{|F|} + |T^*_I(F)|\pi_2(p)^{\frac{1}{2}|F|} \right].
            \end{split}
        \end{align}
        
        Let us find an upper bound for $|T^*_D(F)|$. By definition of $T(F)$, we have that $T(F)F \subset CK$. Since each $F \in \bF$ is nonempty, we also have that $|T(F)| \le |T(F)F|$, and by definition, $T^*_D(F) \subset T^*(F)$. By combining these facts, we obtain
        \begin{equation*}
            |T^*_D(F)| \le |T^*(F)| = \binom{|T(F)|}{2} \le |T(F)|^2 \le |T(F)F|^2 \le |CK|^2.
        \end{equation*}
        
        Let us now find an upper found on $|T^*_I(F)|$. Let $a \in T(F)$, and consider all possible $b \in T(F)$ such that $aF \cap bF \ne \varnothing$. By Lemma \ref{pos:trans_overlap}, we have that $aF \cap bF \ne \varnothing \iff b \in aFF^{-1}$, and thus, for any choice of $a \in T(F)$, the only choices for $b$ are in $aFF^{-1}$. As a result, there are at most $|aFF^{-1}| = |FF^{-1}|$ choices for $b$ given any choice of $a$. Furthermore, there are clearly at most $|T(F)|$ choices for $a$, which we can again bound by $|T(F)| \le |T(F)F| \le |CK|$. Thus, with at most $|CK|$ choices for $a$ and at most $|FF^{-1}|$ corresponding choices for $b$, we see that $|T^*_I(F)| \le |CK||FF^{-1}|$.
        
        Combining our bounds on $|T^*_D(F)|$ and $|T^*_I(F)|$ with \eqref{eqn:walkman}, we get
        \begin{align*}
            \E[X] &\le \sum_{F \in \bF} |T^*_D(F)|\pi_2(p)^{|F|} + |T^*_I(F)|\pi_2(p)^{\frac{1}{2}|F|} \\
            &\le \sum_{F \in \bF} |CK|^2\pi_2(p)^{|F|} + |CK||FF^{-1}|\pi_2(p)^{\frac{1}{2}|F|}.
        \end{align*}
        Then by \eqref{eqn:raven} and the previous display, we obtain
        \begin{equation*}
            \mu_p(\cI_{C,K}) \ge 1 - \E[X] \ge 1 - \sum_{F \in \bF} \left[ |CK|^2\pi_2(p)^{|F|} + |CK||FF^{-1}|\pi_2(p)^{\frac{1}{2}|F|} \right],
        \end{equation*}
        which completes the proof.
    \end{proof}
\end{lemma}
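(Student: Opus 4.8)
The plan is to combine the first moment method with the sufficient condition for identifiability furnished by Lemma \ref{pos:id_lem}. That lemma guarantees that a configuration $x$ is identifiable whenever $x$ assigns distinct patterns to every pair of distinct translates of each overlap shape $F \in \bF$ lying inside $CK$ (here the hypotheses $\varnothing \notin \bF$ and connectivity of the overlap graph are exactly what Lemma \ref{pos:id_lem} requires). It therefore suffices to show that this \dquote{unique labeling} property fails with small probability, and to bound that probability via a union bound over all shapes and all pairs of positions.

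First I would, for each $F \in \bF$, record the admissible translate positions $T(F) = \{ g \in G : gF \subset CK \}$, and for each unordered pair $\{a,b\}$ of distinct elements of $T(F)$ introduce the indicator $\1^F_{a,b}$ that equals $1$ exactly when $\sigma^a(x)(F) = \sigma^b(x)(F)$, i.e.\ when $x \in \cR(aF; ba^{-1})$. Summing these over all $F$ and all pairs yields a nonnegative integer random variable $X$ counting coincident translate pairs. Since $(X = 0)$ forces the unique labeling property, Lemma \ref{pos:id_lem} gives $(X=0) \subset \cI_{C,K}$, and then monotonicity together with Markov's inequality yields
\begin{equation*}
    \mu_p(\cI_{C,K}) \ge \mu_p(X=0) = 1 - \mu_p(X \ge 1) \ge 1 - \E[X].
\end{equation*}

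The heart of the argument is bounding $\E[X]$, which by linearity equals $\sum_{F} \sum_{\{a,b\}} \mu_p\bigl(\cR(aF; ba^{-1})\bigr)$. The key structural observation, and the step demanding the most care, is that the repeat probability depends on whether the two translates overlap: for a disjoint pair ($aF \cap bF = \varnothing$) Theorem \ref{prob:n_disj_repeat_prob} gives the exact value $\pi_2(p)^{|F|}$, whereas for an intersecting pair the shared coordinates force orbits of size larger than two, so only the weaker bound $\pi_2(p)^{\frac{1}{2}|F|}$ from Theorem \ref{prob:repeat_bounds} is available. I would accordingly split the pairs into disjoint and intersecting classes $T^*_D(F)$ and $T^*_I(F)$ and estimate each sum separately.

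Finally I would count the two classes. Because $T(F)F \subset CK$ and each $F$ is nonempty, we have $|T(F)| \le |T(F)F| \le |CK|$, so the total pair count, hence $|T^*_D(F)|$, is at most $|CK|^2$. For intersecting pairs, Lemma \ref{pos:trans_overlap} shows $aF \cap bF \ne \varnothing$ precisely when $b \in aFF^{-1}$, so each of the at most $|CK|$ choices of $a$ has at most $|FF^{-1}|$ valid partners $b$, giving $|T^*_I(F)| \le |CK||FF^{-1}|$. Substituting these counts and the two probability bounds into $\E[X]$ and then into the Markov estimate produces the stated inequality. I expect the main obstacle to be the disjoint/intersecting dichotomy: applying the lossy $\frac{1}{2}$-exponent bound uniformly would overcount the dominant (disjoint) contribution, so it is essential to isolate the disjoint pairs and exploit the sharper exponent $|F|$ that Theorem \ref{prob:n_disj_repeat_prob} provides for them.
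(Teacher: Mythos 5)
Your proposal is correct and follows essentially the same route as the paper's own proof: the same first-moment argument via Lemma \ref{pos:id_lem}, the same indicator variables over pairs in $T(F)$, the same split into disjoint pairs (handled exactly by Theorem \ref{prob:n_disj_repeat_prob}) and intersecting pairs (bounded via Theorem \ref{prob:repeat_bounds} and counted using Lemma \ref{pos:trans_overlap}), and the same counting bounds $|T^*_D(F)| \le |CK|^2$ and $|T^*_I(F)| \le |CK||FF^{-1}|$. Your closing remark about why the dichotomy is essential --- the sharper exponent $|F|$ is needed precisely because the disjoint pairs dominate in number --- is also the correct rationale behind the paper's decomposition.
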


\subsection{Proof of Theorem \ref{pos:regime}} \label{Sect:ProofPositive}

We are now prepared to prove Theorem \ref{pos:regime}, which we restate for the reader's convenience. 

\begin{repthm}{pos:regime}
    Let $\cA$, $p$, $\{G_n\}$, $\{C_n\}$, $\{K_n\}$, and $\{\mu_p^n\}$ be as in Section \ref{Sect:Intro}, and suppose that $\lim_n |C_nK_n| = \infty$ \eqref{assump:CK_to_inf}. If there exists a sequence $\{\bF_n\}$ such that $\bF_n \subset \cP(K_n)$ and such that \begin{itemize}[itemsep=0.4\baselineskip]
        \item[\eqref{assump:overlap_conn}] $\forall n \gg 0$, $\cO(G_n,C_n,K_n,\bF_n)$ is connected,
        \item[\eqref{assump:few_oshapes}] $\lim_n \frac{\ln |\bF_n|}{\ln |C_nK_n|} = 0$, and
        \item[\eqref{assump:oshape_size_bound}] $\exists \epsilon > 0$ s.t. $\forall n \gg 0$, $\forall F\in \bF_n$ we have $|F| \geq (1+\epsilon)\frac 2{H_2(p)} \ln |C_nK_n|$,
    \end{itemize}
    then identifiability holds a.a.s.
    \begin{proof}
        First, consider $N$ large enough that $|C_n K_n| >1$ for all $n \ge N$, and then let $\alpha$ be some constant such that for every index $n \ge N$,
        \begin{equation*}
            \alpha > \frac{1}{\ln(|C_nK_n|)} + (1 + \epsilon)\frac{2}{H_2(p)}.
        \end{equation*}
        
        With this definition of $\alpha$, we also have that
        \begin{equation*}
            \alpha\ln(|C_nK_n|) > 1 + (1 + \epsilon)\frac{2}{H_2(p)}\ln(|C_nK_n|),
        \end{equation*}
        and thus, for any index $n \ge N$, we can find some integer $\beta_n$ such that 
        \begin{equation*}
            (1 + \epsilon)\frac{2}{H_2(p)}\ln(|C_nK_n|) < \beta_n \le \alpha\ln(|C_nK_n|).
        \end{equation*}
        Now, for each $n \ge N$, we define another set of overlap shapes $\bar{F}_n$ as follows. For any $n \ge N$, let $F \in \bF_n$. If
        \begin{equation*}
            |F| \le \alpha\ln(|C_nK_n|),
        \end{equation*}
        then we include $F$ in $\bar{\bF}_n$, and we note that $|F| \geq (1+\epsilon) 2 \ln(|C_n K_n|) / H_2(p)$ by \eqref{assump:oshape_size_bound}. Otherwise, arbitrarily choose a subset $\bar{F} \subset F$ containing exactly $\beta_n$ elements, and include $\bar{F}$ in $\bar{\bF}_n$. Note that $\forall n \ge N, \forall \bar{F} \in \bar{\bF}_n$, we have that
        \begin{equation} \label{eqn:coffee}
            (1+\epsilon)\frac{2}{H_2(p)}\ln(|C_nK_n|) \le |\bar{F}| \le \alpha\ln(|C_nK_n|).
        \end{equation}
        Furthermore, $\forall \bar{F} \in \bar{\bF}_n$, there is some $F \in \bF_n$ such that $\bar{F} \subset F \subset G_n$. Since $|C_nK_n| > 1$, we must have that $\beta_n > 0$, so we know that each $\bar{F}$ is nonempty. Therefore each $\bar{\cO}_n = \cO(G_n,C_n,K_n,\bar{\bF}_n)$ is connected by Lemma \ref{pos:sub-overlap_connected}. 
        
        Now consider a single index $n \ge N$. Since each $F \in \bar{\bF}_n$ is nonempty and each $\bar{\cO}_n$ is connected, Lemma \ref{pos:id_bound} gives that
        \begin{equation} \label{eqn:sugar}
            \mu^n_p(\cI_{C_n,K_n}) \ge 1 - \sum_{\bar{F} \in \bar{\bF}_n} \left[ |C_nK_n|^2\pi_2(p)^{|\bar{F}|} + |C_nK_n||\bar{F}\bar{F}^{-1}|\pi_2(p)^{\frac{1}{2}|\bar{F}|} \right].
        \end{equation}
        We now consider a single term in the sum. Using the lower bound on  $|\bar{F}|$ from \eqref{eqn:coffee}, we obtain that
        \begin{align*}
            |C_nK_n|^2\pi_2(p)^{|\bar{F}|} &= \exp\bigl(2\ln(|C_nK_n|)\bigr)\exp\bigl(-|\bar{F}|H_2(p)\bigr) \\
            &\le \exp\left(2\ln(|C_nK_n|)-(1+\epsilon)\frac{2}{H_2(p)}\ln(|C_nK_n|)H_2(p)\right) \\
            &= \exp\bigl(-2\epsilon\ln(|C_nK_n|)\bigr) \\
            &= |C_nK_n|^{-2\epsilon}.
        \end{align*}
        Similarly, using both the upper and lower bounds on $|\bar{F}|$ from \eqref{eqn:coffee}, we see that
        \begin{align*}
            |C_nK_n||\bar{F}&\bar{F}^{-1}|\pi_2(p)^{\frac{1}{2}|\bar{F}|} = |\bar{F}\bar{F}^{-1}|\exp\bigl(\ln(|C_nK_n|)\bigr)\exp\left(-\frac{1}{2}|\bar{F}|H_2(p)\right) \\
            &\le |\bar{F}|^2\exp\left(\ln(|C_nK_n|)-\frac{1}{2}(1+\epsilon)\frac{2}{H_2(p)}\ln(|C_nK_n|)H_2(p)\right) \\
            &\le \alpha^2\ln(|C_nK_n|)^2\exp\bigl(-\epsilon\ln(|C_nK_n|)\bigr) \\
            &= \alpha^2\ln(|C_nK_n|)^2|C_nK_n|^{-\epsilon}.
        \end{align*}
        Combining these two estimates with \eqref{eqn:sugar} gives that
        \begin{align} \begin{split} \label{eqn:oak}
            \mu^n_p(\cI_{C_n,K_n}) &\ge 1 - \sum_{F \in \bar{\bF}_n} \left[ |C_nK_n|^{-2\epsilon} + \alpha^2\ln(|C_nK_n|)^2|C_nK_n|^{-\epsilon} \right] \\
            &= 1 - |\bar{\bF}_n|\Bigl[ |C_nK_n|^{-2\epsilon} + \alpha^2\ln(|C_nK_n|)^2|C_nK_n|^{-\epsilon} \Bigr] \\
            &= 1 - \frac{|\bar{\bF}_n|}{|C_nK_n|^{2\epsilon}} - \alpha^2\frac{|\bar{\bF}_n|\ln(|C_nK_n|)^2}{|C_nK_n|^\epsilon}.
            \end{split}
        \end{align}
        Note that we have $|\bar{\bF}_n| = |\bF_n|$ by construction. Now by \eqref{assump:few_oshapes}, we have that
        \begin{equation*}
           \lim_{n \to \infty} \frac{\ln(|\bar{\bF}_n|)}{\ln(|C_nK_n|)} =  \lim_{n \to \infty} \frac{\ln(|\bF_n|)}{\ln(|C_nK_n|)}  = 0.
        \end{equation*}
        Taking $\delta = \epsilon/2$, for all large enough $n$ we have that
        \begin{equation*}
            \frac{\ln(|\bar{\bF}_n|)}{\ln(|C_nK_n|)} \le \delta, 
        \end{equation*}
        which implies that $|\bar{\bF}_n| \le |C_nK_n|^\delta$.
        Using this estimate together with \eqref{eqn:oak}, we see that
        \begin{align*}
            1 \geq  \mu^n_p(\cI_{C_n,K_n}) &\ge 1 -  \frac{1}{|C_nK_n|^{2\epsilon - \delta}} - \alpha^2 \frac{\ln(|C_nK_n|)^2}{|C_nK_n|^{\epsilon-\delta}} \\
            &= 1 -  \frac{1}{|C_nK_n|^{3\epsilon/2}} - \alpha^2 \frac{\ln(|C_nK_n|)^2}{|C_nK_n|^{\epsilon/2}}.
        \end{align*}
        Letting $n$ tend to infinity and using that $|C_n K_n| \to \infty$ by \eqref{assump:CK_to_inf}, we obtain that
        \begin{equation*}
            \lim_{n \to \infty} \mu_p^n(\cI_{C_n,K_n}) = 1,
        \end{equation*}
        which concludes the proof of the theorem.
    \end{proof}
\end{repthm}

\section{Non-Identifiability results} \label{sect:neg}

In this section we aim to prove Theorem \ref{neg:regime}, which establishes sufficient conditions for non-identifiability a.a.s. 
Our approach to establishing non-identifiability is to show that certain \textit{blocking configurations} appear with high probability. To get the general idea, imagine that a pattern $w$ has two large subpatterns that are labeled identically except at their centers. The existence of these subpatterns would prevent successful reconstruction, because there would be no way to use the information from the reads to determine how the centers should be labeled. Thus, these subpatterns would serve as a blocking configuration. We note that analogous considerations have been used in \cite{motahari} and \cite{mossel} to establish non-identifiability results in the case $G = \Z^d$.
In order to establish that such blocking configurations appear with high probability, we employ a second moment argument.

In Section \ref{Sect:CombinatorialNegative} we establish some combinatorial lemmas related to these blocking configurations. The key result is Lemma \ref{neg:id_lem}, which shows that if a pattern contains a blocking configuration, then there is another pattern with the same reads. The goal of Section \ref{Sect:ExceptionalPatterns} is to bound the probability of the occurrence of some exceptional patterns that contain the blocking configurations but remain identifiable due to symmetry. Then in Section \ref{Sect:ProbNonId} we bound the probability of the existence of a blocking configuration from below, and finally in Section \ref{Sect:ProofNegative} we combine these results to prove Theorem \ref{neg:regime}.

\subsection{Combinatorial results for non-identifiability} \label{Sect:CombinatorialNegative}

We begin to make these ideas precise with the following definitions.

\begin{defn}
    Let $G$ be a group and let $C, K \subset G$. For any $h \in CK$, we define the \textit{center set around $h$} to be
    \begin{equation*}
        C_h = C \cap hK^{-1}.
    \end{equation*}
    In other words, $C_h$ is the set of centers $c \in C$ such that $h \in cK$. 
\end{defn}

\begin{defn}
    Let $G$ be a group and let $C, K \subset G$. For any $h \in CK$, we define the \textit{shell around $h$} to be
    \begin{equation*}
        S_h = (C_hK) \setminus \{h\}.
    \end{equation*}
    Additionally, we let $\bar{S}_h = C_h K$. 
\end{defn}

Notice that for arbitrary $g,h \in CK$, the shells $S_g$ and $S_h$ are not necessarily  
translates of one another, since the center set $C$ can be different \dquote{near} $g$ than it is \dquote{near} $h$. Nonetheless, for any $h \in CK$, we have that $C_h \subset hK^{-1}$, and hence $S_h \subset hK^{-1} K$. We refer to $h K^{-1} K$ as the \textit{maximal shell} around $h$. Since the shells around two elements need not be translates, it will be useful to have a formal definition for the \dquote{type} of shell around an element.

\begin{defn}
    Let $G$ be a group, let $C, K \subset G$, and let $a \in CK$. The \textit{shell type} at $a$ is defined to be $a^{-1}C_a$.
\end{defn}

Note that two elements $a$ and $b$ have the same shell type, i.e.,  $b^{-1}C_b = a^{-1}C_a$, if and only if $C_b = ba^{-1}C_a$. 
Given some set $A \subset CK$, we define $I_A$ to be the set of shell types that occur for some $a \in A$, and for any $\alpha \in I_A$, we let $A_{\alpha}$ denote the set of $a \in A$ with shell type $\alpha$. Then $|I_A|$ is the number of different shell types represented in the set $A$. We will consistently use lowercase Greek letters to represent elements of $I_A$ in order to avoid confusion with elements of $G$. 

The following elementary lemma is used to show that if a read shape $hK$ contains $g$, then $hK$ is entirely contained in the maximal shell around $g$.
\begin{lemma} \label{neg:trans_shell}
    Let $G$ be a group, let $g,h \in G$, and let $A \subset G$. If $g \in hA$, then $hA \subset gA^{-1}A$.
    \begin{proof}
        Suppose $g \in hA$, so $\exists a_1 \in A$ such that $g = ha_1$, and $h = ga_1^{-1}$. If we now consider some $c \in hA$ (possibly $g$), then $\exists a_2 \in A$ such that $c = ha_2 = ga_1^{-1}a_2$, which yields that $c \in gA^{-1}A$, and hence $hA \subset gA^{-1}A$.
    \end{proof}
\end{lemma}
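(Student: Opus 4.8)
The plan is to unwind the membership hypothesis into an explicit algebraic relation between the two base points $g$ and $h$, and then push an arbitrary element of $hA$ through that relation. First I would use $g \in hA$ to extract a witness $a_1 \in A$ with $g = ha_1$, and rearrange this to $h = ga_1^{-1}$. This is the key identity: it expresses the base point $h$ in terms of $g$ at the cost of a single factor $a_1^{-1}$, which lies in $A^{-1}$ precisely because $a_1 \in A$.

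Next I would take an arbitrary element $c \in hA$ and write $c = ha_2$ for some $a_2 \in A$. Substituting the identity $h = ga_1^{-1}$ yields $c = g\,a_1^{-1} a_2$. Since $a_1^{-1} \in A^{-1}$ and $a_2 \in A$, the product $a_1^{-1}a_2$ belongs to $A^{-1}A$ by the definition of the setwise product. Hence $c \in gA^{-1}A$, and as $c \in hA$ was arbitrary, we conclude $hA \subset gA^{-1}A$, as desired.

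There is no substantive obstacle here: the statement is a one-line manipulation of group elements, and the only idea required is to reuse the single witness $a_1$ realizing $g \in hA$ both to rewrite $h$ and to supply the compensating factor $a_1^{-1} \in A^{-1}$. The argument uses nothing beyond the group axioms and the definitions of $A^{-1}$ and $A^{-1}A$, so it applies verbatim to an arbitrary (not necessarily abelian) group $G$.
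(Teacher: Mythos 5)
Your proof is correct and is essentially identical to the paper's: both extract a witness $a_1 \in A$ with $g = ha_1$, rewrite $h = ga_1^{-1}$, and then express an arbitrary $c = ha_2 \in hA$ as $g a_1^{-1} a_2 \in gA^{-1}A$. Nothing to add.
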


The blocking configurations that we use to guarantee non-identifiability are simply repeated shells. We begin our analysis of repeated shells by making some useful definitions.

\begin{defn}
    Let $G$ be a group and let $C,K \subset G$. Define the set $\cL_{C,K}$ to be the set of locations where we consider patterns to have repeated shells, i.e.,
    \begin{equation*}
        \cL_{C,K} = \bigl\{\{a,b\} \subset CK : C_b = ba^{-1}C_a, C_a \cap C_b = \varnothing\bigr\}.
    \end{equation*}
    
    Then, for any $a,b \in \cL_{C,K}$, we define the \textit{$(a,b)$-Repeated Shell} set, 
    \begin{equation*}
        \RS_{C,K}(a,b) = \bigl\{x \in \cA^G : x(S_a) = \sigma^{ba^{-1}}(x)(S_a) \text{ and } x(a) \ne x(b)\bigr\}.
    \end{equation*}
    Lastly, we define the \textit{Repeated Shell} set, 
    \begin{equation*}
        \RS_{C,K} = \bigcup_{\{a,b\} \in \cL_{C,K}} \RS_{C,K}(a,b).
    \end{equation*}
    Note that $\RS_{C,K}$ is indeed measurable. We view $\RS_{C,K}$ as the event that a random configuration has a repeated shell.
\end{defn}

Next, we define a collection of involutions on $\cA^G$ that swap the labels of two locations. We briefly discuss their utility following the definition.

\begin{defn}
    Let $G$ be a group and let $C,K \subset G$. Given two elements $a \ne b \in G$, we define the function $\varphi_{a,b}:\cA^G \to \cA^G$ by the rule
    \begin{equation*}
        \varphi_{a,b}(x)(g) = \begin{cases}
            x(b), & g = a \\
            x(a), & g = b \\
            x(g), & \text{otherwise.}
        \end{cases}
    \end{equation*}
     In other words, $\varphi_{a,b}(x)$ is the same as $x$, except that the labels at $a$ and $b$ are swapped.
\end{defn}

The idea here is that if a pattern $x$ contains repeated shells at $a$ and $b$, then $\varphi_{a,b}(x)$ will have the same multi-set of reads as $x$ does, which we state in the following lemma. It would be convenient for our arguments if having a repeated shell guaranteed non-identifiability. However, there may be some patterns with repeated shells that are still identifiable, and we account for such exceptional patterns in the next section. 

\begin{lemma} \label{neg:id_lem}
    Let $G$ be a group, let $C, K \subset G$, let $\{a,b\} \in \cL_{C,K}$, and let $x \in \RS_{C,K}(a,b)$. Then $\varphi_{a,b}(x) \ne x$ and 
    $\cR_{C,K}\bigl(x(CK)\bigr) = \cR_{C,K}\bigl(\varphi_{a,b}(x)(CK)\bigr)$.
    \begin{proof}
        Let $x' = \varphi_{a,b}(x)$, $w = x(CK)$, and $w' = x'(CK)$. First, clearly $w \ne w'$, because $x \in \RS_{C,K}(a,b)$, which implies that $w(a) \ne w(b)$, and thus $w'(a) = w(b) \ne w(a)$. To show that $\cR_{C,K}(w) = \cR_{C,K}(w')$, let us first find some permutation $\phi$ of $C$ such that $\forall c \in C$,
        \begin{equation*}
            \sigma^c(w)(K) = \sigma^{\phi(c)}(w')(K).
        \end{equation*}
        Note that $C_a \cap C_b = \varnothing$, since $\{a,b\} \in \cL_{C,K}$. Now let $\phi : C \to C$ be defined by the rule
        \begin{equation*}
            \phi(c) = \begin{cases}
                ba^{-1}c, & c \in C_a \\
                ab^{-1}c, & c \in C_b \\
                c, & \text{otherwise}.
            \end{cases}
        \end{equation*}
        To check that $\phi$ is well-defined, we note that if $c \in C_a$, then since $\{a,b\} \in \cL_{C,K}$, we must have
        \begin{equation*}
            ba^{-1}c \in ba^{-1}C_a = C_b,
        \end{equation*}
        and similarly for $c \in C_b$. 
        
        Additionally, we can clearly see that $\phi$ is an involution because any $c \in C_a$ maps to an element in $C_b$, and vice versa, and the maps between them are inverses of each other. Our goal now is to show that $\forall c \in C$,
        \begin{equation*}
            \sigma^c(w)(K) = \sigma^{\phi(c)}(w')(K).
        \end{equation*}
        
        First, for any $c_1 \in C \setminus (C_a \cup C_b)$ we have that $\phi(c_1) = c_1$. Then for any $k \in K$, we have
        \begin{equation*}
            \sigma^{c_1}(w)(k) = w(c_1k) = w'(c_1k) = \sigma^{c_1}(w')(k) = \sigma^{\phi(c_1)}(w')(k),
        \end{equation*}
        because $c_1k \ne a,b$. Therefore we have that $\sigma^{c_1}(w)(K) = \sigma^{\phi(c_1)}(w')(K)$.
        
        Next, we consider any $c_2 \in C_a$, which means $\phi(c_2) = ba^{-1}c_2$. We remark the following argument is identical for any $c_3 \in C_b$, replacing $a$ with $b$. The goal is similar as before, however we must now take into account if $c_2k = a$. Let us consider this case first, so let $k \in K$ such that $c_2k = a$ (as we know $a \in c_2K$), and then we have
        \begin{align*}
            \sigma^{c_2}(w)(k) &= w(c_2k) = w(a) \\
            &= w'(b) = w'(ba^{-1}a) = w'(ba^{-1}c_2k) \\
            &= \sigma^{ba^{-1}c_2}(w')(k) = \sigma^{\phi(c_2)}(w')(k).
        \end{align*}
        Next, we consider any other $k \in K$ such that $c_2k \ne a$ (and also $c_2k \ne b$). We now want to ensure that $\phi(c_2)k = ba^{-1}c_2k \ne a,b$.
        
        First, assume that $ba^{-1}c_2k = b$. This implies that $c_2k = a$ which has already been established to be false, so $ba^{-1}c_2k \ne b$. Now, assume that $ba^{-1}c_2k = a$. We also have that $\phi(c_2)k \in C_bK$, which means $a \in C_bK$. Let $c' \in C_b$ and $k' \in K$ such that $a = c'k'$, which implies that $c' = a(k')^{-1}$. Therefore, $c' \in C_a$, however this is a contradiction, because $C_a \cap C_b = \varnothing$, so $ba^{-1}c_2k \ne a$.
        
        Clearly $c_2k \in C_aK \setminus \{a\} = S_a$, and since $x \in \RS_{C,K}(a,b)$, we must have $w(c_2k) = w(ba^{-1}c_2k)$. Then, for any $g \ne a,b \in CK$ we have that $w(g) = w'(g)$, so
        \begin{align*}
            \sigma^{c_2}(w)(k) &= w(c_2k) = w(ba^{-1}c_2k) \\
            &= w'(ba^{-1}c_2k) \\
            &= \sigma^{ba^{-1}c_2}(w')(k) = \sigma^{\phi(c_2)}(w')(k).
        \end{align*}
        Combining the two previous displays, we have $\forall c_2 \in C_a$ that $\sigma^{c_2}(w)(K) = \sigma^{\phi(c_2)}(w')(K)$. Following the same argument for $c_3 \in C_b$, replacing $a$ with $b$, yields $\sigma^{c_3}(w)(K) = \sigma^{\phi(c_3)}(w')(K)$.
        
        As we have considered all elements in $C$, we therefore have $\forall c \in C$, $\sigma^c(w)(K) = \sigma^{\phi(c)}(w')(K)$ for the permutation $\phi$. Then, by Lemma \ref{read_eq}, we conclude that $\cR_{C,K}(w) = \cR_{C,K}(w')$.
    \end{proof}
\end{lemma}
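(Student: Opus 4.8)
The plan is to reduce everything to the characterization of equal reads provided by Lemma \ref{read_eq}: it suffices to exhibit a permutation $\phi$ of $C$ such that $\sigma^c(w)(K) = \sigma^{\phi(c)}(w')(K)$ for every $c \in C$, where $w = x(CK)$ and $w' = \varphi_{a,b}(x)(CK)$. The inequality $\varphi_{a,b}(x) \ne x$ is immediate from the definition of $\RS_{C,K}(a,b)$, which forces $x(a) \ne x(b)$; swapping the labels at $a$ and $b$ therefore genuinely changes the pattern.

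First I would construct the candidate permutation. Since membership $\{a,b\} \in \cL_{C,K}$ gives $C_b = ba^{-1}C_a$, left translation by $ba^{-1}$ carries $C_a$ bijectively onto $C_b$ and translation by $ab^{-1}$ carries it back. This suggests defining $\phi$ to send $c \mapsto ba^{-1}c$ on $C_a$, $c \mapsto ab^{-1}c$ on $C_b$, and to fix every other center. I would then verify that $\phi$ is well defined (it lands in $C$ because $ba^{-1}C_a = C_b$ and $ab^{-1}C_b = C_a$) and that it is an involution, hence a genuine permutation of $C$, using the disjointness $C_a \cap C_b = \varnothing$ built into $\cL_{C,K}$.

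The bulk of the work is then checking the read-matching identity on three disjoint classes of centers. For $c \notin C_a \cup C_b$ we have $\phi(c) = c$, and by the definition of $C_a$ and $C_b$ neither $a$ nor $b$ lies in $cK$; since $w$ and $w'$ differ only at $a$ and $b$, the reads agree. For $c \in C_a$ we have $\phi(c) = ba^{-1}c \in C_b$, and I must show $w(ck) = w'(ba^{-1}ck)$ for every $k \in K$: when $ck = a$ this follows because $w'(b) = x(a)$, and when $ck \ne a$ the point $ck$ lies in the shell $S_a$, so the repeated-shell hypothesis $x(ck) = x(ba^{-1}ck)$ applies. The case $c \in C_b$ is symmetric, interchanging the roles of $a$ and $b$.

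The main obstacle I anticipate is the bookkeeping in the case $c \in C_a$ with $ck \ne a$: to conclude $w'(ba^{-1}ck) = x(ba^{-1}ck)$ I must rule out $ba^{-1}ck \in \{a,b\}$. Excluding the value $b$ is easy, since $ba^{-1}ck = b$ would force $ck = a$. Excluding the value $a$ is the delicate step and is exactly where disjointness is needed: if $ba^{-1}ck = a$, then $a \in C_bK$, which produces a center lying in both $C_a$ and $C_b$, contradicting $C_a \cap C_b = \varnothing$. This disjointness is precisely what prevents the swap from interfering with itself, and carrying out this exclusion carefully is the crux of the argument; once it is established, the three cases combine to give the required permutation and the conclusion follows from Lemma \ref{read_eq}.
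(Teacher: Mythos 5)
Your proposal is correct and follows essentially the same route as the paper's proof: the same permutation $\phi$ (translation by $ba^{-1}$ on $C_a$, by $ab^{-1}$ on $C_b$, identity elsewhere), the same three-case verification of $\sigma^c(w)(K) = \sigma^{\phi(c)}(w')(K)$, and the same crucial use of $C_a \cap C_b = \varnothing$ to rule out $ba^{-1}ck = a$, concluding via Lemma \ref{read_eq}. You also correctly identified the exclusion of $ba^{-1}ck \in \{a,b\}$ as the delicate step, which is exactly where the paper spends its effort.
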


\subsection{Exceptional patterns} \label{Sect:ExceptionalPatterns}

As we mention above, it is not necessarily true that any $x \in \RS_{C,K}(a,b)$ is non-identifiable; indeed, it may happen that $\varphi_{a,b}(x)(CK) \in \cC\bigl(x(CK)\bigr)$. In order to account for such patterns, we estimate the probability of selecting a pattern with a repeated shell that is still identifiable. To do so, we first require a few group theoretic definitions and lemmas. 

\begin{defn}
    Let $G$ be a group, and let $A \subset G$. The \textit{stabilizer} of $A$, denoted by $G_A$, is defined as
    \begin{equation*}
        G_A = \{g \in G : gA = A\}.
    \end{equation*}
\end{defn}

We remark that this is precisely the usual definition of the stabilizer if we consider the action of $G$ on the power set $\cP(G)$. It is an elementary exercise to show that $G_A$ is a subgroup of $G$. Since $G$ may be infinite, we cannot immediately guarantee that $G_A$ is a finite subgroup. However, the following lemma yields that $G_A$ must be finite whenever $A$ is finite.

\begin{lemma} \label{neg:stab_ord}
    Let $G$ be a group and let $A \subset G$ be finite. Then $|G_A| \le |A|$, and in particular, every element of $G_A$ has finite order.
    \begin{proof}
        Fix some element $a \in A$. Then clearly $|G_A| = |G_Aa|$. Since $gA = A$ for all $g \in G_A$, we observe that $ga \in A$ for all $g \in G_A$, and therefore $G_Aa \subset A$. Hence $|G_A| = |G_Aa| \le |A|$. Furthermore, since $G_A$ is a subgroup of $G$, every element in $G_A$ must have finite order.
    \end{proof}
\end{lemma}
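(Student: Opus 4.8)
The plan is to exploit the finiteness of $A$ by transporting it to $G_A$ through a single well-chosen element. Since $G$ may be infinite, I cannot appeal to finiteness of the ambient group; instead the entire argument will hinge on pinning $G_A$ inside $A$ via left multiplication.

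First I would fix an arbitrary element $a \in A$ and consider the set $G_A a = \{ga : g \in G_A\}$. The map $g \mapsto ga$ is injective on $G_A$ (indeed on all of $G$, since left cancellation holds in any group), so it restricts to a bijection from $G_A$ onto $G_A a$, giving $|G_A| = |G_A a|$.

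Next I would use the defining property of the stabilizer: every $g \in G_A$ satisfies $gA = A$, and in particular $ga \in gA = A$. Hence $G_A a \subseteq A$, which yields $|G_A a| \le |A|$. Chaining this with the equality from the previous paragraph gives the desired bound $|G_A| \le |A|$.

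Finally, for the order claim, I would recall that $G_A$ is a subgroup of $G$ (an elementary verification, noted already in the text) and combine this with the bound just obtained, so that $G_A$ is a \emph{finite} group. In a finite group every element has finite order, since the powers of any fixed element cannot all be distinct; hence every element of $G_A$ has finite order. There is no substantive obstacle here — the argument is short and elementary — and the only point worth care is the observation that one should not try to use finiteness of $G$ (which may fail), but rather manufacture finiteness of $G_A$ out of that of $A$ through the injection $g \mapsto ga$.
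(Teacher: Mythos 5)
Your proposal is correct and follows essentially the same route as the paper's proof: fix $a \in A$, use the injection $g \mapsto ga$ to get $|G_A| = |G_A a|$, observe $G_A a \subseteq A$ from the stabilizer property, and deduce finite order from the finiteness of the subgroup $G_A$. Your added remark spelling out why elements of a finite group have finite order is a minor elaboration of the same argument.
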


For a set $F \subset G$, define the event $E(F)$ as the set of all patterns such that every element of $F$ has the same label. The measure of this event is given precisely by equation \eqref{prob:repeat_prob:eq1}. Next, we consider sufficient criteria for a pattern to be contained within two such events. 

\begin{lemma} \label{neg:part_orbit_lem}
    Let $G$ be a group, $C, K \subset G$, $a,b \in CK$,  and let $x \in \cA^G$ such that $x(a) \ne x(b)$. Further suppose that there is some $g \in G_C$ such that $x(CK) = \sigma^g\bigl(\varphi_{a,b}(x)\bigr)(CK)$. 
    Then $\exists m \in \bigr(0,\ord(g)\bigl)$ such that $b = g^ma$ and $x \in E(P_1) \cap E(P_2)$, where
    \begin{equation*}
        P_1 = \{a, ga, \cdots, g^{m-1}a\}, \quad \text{and} \quad P_2 = \{b, gb, \cdots, g^{\ord(g)-m-1}b\}.
    \end{equation*}
    \begin{proof}
        Let $x' = \varphi_{a,b}(x)$. By Lemma \ref{neg:stab_ord}, $g$ has finite order. We note that
        \begin{equation} \label{eqn:jano}
            x(h) = \sigma^g(x')(h) = x'(gh), \quad \forall h \in CK.     
        \end{equation}
        Additionally, since $x(a) \ne x(b)$, we see that $x \ne x'$, and therefore $g \ne e$. Also, since $x(a) \ne x(b)$, we note that $a \ne b$.
        
        Now, for any $0 < n < \ord(g)$, we have that $g^n \ne e$, and then $g^na \neq a$. By definition of $x'$, if $g^na \ne b$, then we have
        \begin{equation} \label{eqn:lighthouses}
            x'(g^na) = x(g^na).
        \end{equation}
        Also, since $a \in CK$, there exists some $c \in C$ and $k \in K$ such that $a = ck$. Since $g \in G_C$, we have that $g^nc \in C$, and hence $g^na = g^nck \in CK$. Then by \eqref{eqn:jano}, we have
        \begin{equation} \label{eqn:seahorses}
            x(g^na) = \sigma^g(x')(g^na) = x'(g^{n+1}a).
        \end{equation}
        
        Suppose for contradiction that $\forall n \in \bigl(0,\ord(g)\bigr)$, $b \ne g^na$, and consider any such $n$. By the inductive application of \eqref{eqn:lighthouses} and \eqref{eqn:seahorses}, we obtain
        \begin{equation*}
            x(a) = x'(ga) = x(ga) = x'(g^2a) = \cdots = x(g^{\ord(g)-1}a) = x'(g^{\ord(g)}a) = x'(a).
        \end{equation*}
        However, this equality contradicts our assumption that $x(a) \ne x(b)$, because $ x'(a) = x(b)$ by definition. Therefore $\exists m \in (0,\ord(g))$ such that $b = g^ma$. 
        
        The inductive application of \eqref{eqn:lighthouses} and \eqref{eqn:seahorses} starting from $x(a)$ now gives
        \begin{equation*}
            x(a) = x'(ga) = x(ga) = \cdots = x(g^{m-1}a) = x'(g^ma) = x'(b) = x(a).
        \end{equation*}
        This implies $x \in E(P_1)$. Starting with $x(b)$ and inductively applying \eqref{eqn:lighthouses} and \eqref{eqn:seahorses}, we have
        \begin{equation*}
            x(b) = x'(gb) = \cdots = x(g^{\ord(g)-m-1}b) = x'(g^{\ord(g)-m}b) = x'(a) = x(b),
        \end{equation*}
        which implies $x \in E(P_2)$. Therefore, $x \in E(P_1) \cap E(P_2)$.
    \end{proof}
\end{lemma}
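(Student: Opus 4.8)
The plan is to pass to the swapped pattern $x' = \varphi_{a,b}(x)$ and to extract from the hypothesis $x(CK) = \sigma^g(x')(CK)$ the pointwise \emph{transport identity}
\[
    x(h) = \sigma^g(x')(h) = x'(gh), \qquad \forall h \in CK,
\]
which drives the entire argument. First I would record the basic facts: by Lemma \ref{neg:stab_ord} the element $g \in G_C$ has finite order; since $x(a) \ne x(b)$ we have $x \ne x'$, hence $g \ne e$ and $a \ne b$; and, crucially, left multiplication is free, so $g^n a = a$ if and only if $g^n = e$. Because $a = ck$ for some $c \in C$, $k \in K$ and $g$ stabilizes $C$ setwise, every forward iterate $g^n a = (g^n c)k$ again lies in $CK$, so the transport identity may be applied all along the orbit of $a$ (and symmetrically of $b$). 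I would also note that $x'$ agrees with $x$ off $\{a,b\}$, so $x'(g^n a) = x(g^n a)$ whenever $g^n a \notin \{a,b\}$, and the same for iterates of $b$.

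Next I would prove the existence of $m$ by contradiction. Assuming $g^n a \ne b$ for every $n \in (0,\ord(g))$, I would alternate the transport step $x(g^n a) = x'(g^{n+1}a)$ with the replacement $x'(g^{n+1}a) = x(g^{n+1}a)$ to chain
\[
    x(a) = x'(ga) = x(ga) = \cdots = x'(g^{\ord(g)}a) = x'(a),
\]
where the final equality uses $g^{\ord(g)} = e$. Since $x'(a) = x(b)$ by definition of $\varphi_{a,b}$, this forces $x(a) = x(b)$, a contradiction. Hence $b = g^m a$ for some $m \in (0,\ord(g))$; by freeness this $m$ is the \emph{unique} exponent in that range, since $g^m a = b$ determines $g^m = ba^{-1}$.

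Finally I would establish the two membership claims by running the same chain but truncating at the correct indices. For $P_1$, uniqueness of $m$ gives $g^j a \ne b$ for $0 < j < m$, and freeness gives $g^j a \ne a$ there, so each link $x(g^j a) = x(g^{j+1}a)$ is valid for $0 \le j \le m-2$; the chain $x(a) = x(ga) = \cdots = x(g^{m-1}a)$ then yields $x \in E(P_1)$. Symmetrically, the identity $g^{\ord(g)-m}b = a$ shows that $g^j b \notin \{a,b\}$ for $0 < j < \ord(g)-m$, so the analogous chain starting at $x(b)$ gives $x \in E(P_2)$, and therefore $x \in E(P_1) \cap E(P_2)$.

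The step I expect to be the main obstacle is precisely the bookkeeping in this last paragraph: one must verify that along each of the two truncated orbits the iterates never prematurely coincide with $a$ or $b$, since a single stray coincidence would break a link in the chain and invalidate the replacement $x'(\cdot) = x(\cdot)$. This is exactly where freeness of the action ($g^n a = a \iff g^n = e$) and the uniqueness of $m$ do the work; restricting the transport identity to $h \in CK$ is automatic from $g \in G_C$, and the remainder is a mechanical iteration.
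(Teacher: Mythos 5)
Your proposal is correct and follows essentially the same route as the paper's proof: the same transport identity $x(h) = x'(gh)$ on $CK$, the same contradiction argument producing the exponent $m$, and the same truncated chains alternating the transport step with the off-$\{a,b\}$ agreement of $x$ and $x'$ to get $x \in E(P_1) \cap E(P_2)$. The only difference is that you make explicit the uniqueness of $m$ and the verification that no iterate prematurely hits $a$ or $b$, points the paper leaves implicit (and which do hold, exactly by the freeness and uniqueness facts you cite).
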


Next we give a bound on the probability of $\RS_{C,K} \cap \cI_{C,K}$, the event that a pattern has a repeated shell and yet is still identifiable.

\begin{lemma} \label{neg:insct_bound}
    Let $G$ be a group, let $C,K \subset G$, let $\cA$ be a finite alphabet, let $p$ be a probability vector on $\cA$, and let $\mu_p$ be the product measure $p^G$ on $\cA^G$. Then
    \begin{equation*}
        \mu_p\bigl(\RS_{C,K} \cap \cI_{C,K}\bigr) \le |CK|^3 \pi_2(p)^{\frac{|CK|}{2}-1}.
    \end{equation*}
    \begin{proof}
        First, for $a,b \in CK$ and $g \in G_C$, define the set 
        \begin{equation*}
            \cT_{C,K}(a,b,g) = \bigl\{ x \in \cA^G : x(a) \ne x(b), x(CK) = \sigma^g\bigl( \varphi_{a,b}(x)\bigr)(CK) \bigr\},
        \end{equation*}
        and then define the set 
        \begin{equation*}
            \cT_{C,K} = \bigcup_{a,b \in CK} \bigcup_{g \in G_C} \cT_{C,K}(a,b,g).
        \end{equation*}
        
        Consider some $x \in \RS_{C,K} \cap \cI_{C,K}$. Then there exists $\{a,b\} \in \cL_{C,K}$ such that $x \in \RS_{C,K}(a,b)$, and in particular $x(a) \ne x(b)$. Then by Lemma \ref{neg:id_lem}, $x' = \varphi_{a,b}(x) \ne x$ and $\cR_{C,K}\bigl(x(CK)\bigr) = \cR_{C,K}\bigl(x'(CK)\bigr)$. Since we also have that $x \in \cI_{C,K}$, it must be the case that $x'(CK) \in \cC\bigl(x(CK)\bigr)$, and therefore $\exists g \in G_C$ such that $\sigma^g(x')(CK) = x(CK)$. Hence $x \in \cT_{C,K}(a,b,g)$. Since $x \in \RS_{C,K} \cap \cI_{C,K}$ was arbitrary, we see that
        \begin{equation*}
            \RS_{C,K} \cap \cI_{C,K} \subset \cT_{C,K},
        \end{equation*}
        and therefore $\mu_p\bigl(\RS_{C,K} \cap \cI_{C,K}\bigr) \le \mu_p(\cT_{C,K})$ by monotonicity.
        
        We now focus on finding a bound for $\mu_p\bigl(\cT_{C,K}(a,b,g)\bigr)$. For any $a,b \in CK$ and $g \in G_C$, suppose $x \in \cT_{C,K}(a,b,g)$, and let $x' = \varphi_{a,b}(x)$. By definition, $x$ satisfies the hypotheses of Lemma \ref{neg:part_orbit_lem}, and therefore (by the lemma) there is some integer $m \in (0,\ord(g))$ such that $b = g^ma$ and $x \in E(P_1) \cap E(P_2)$, with
        \begin{equation*}
            P_1 = \{a, ga, \cdots, g^{m-1}a\}, \quad P_2 = \{b, gb, \cdots, g^{\ord(g)-m-1}b\}.
        \end{equation*}
        Note that $P_1 \cup P_2 = \langle g \rangle a$, and this union contains both $a$ and $b$. Since the only difference between $x$ and $x'$ is the labels at $a$ and $b$, we have that $x'\bigl( CK \setminus \{a,b\} \bigr) = x\bigl( CK \setminus \{a,b\} \bigr)$, and in particular $x'\bigl(CK \setminus \langle g \rangle a\bigr) = x\bigl(CK \setminus \langle g \rangle a\bigr)$. Since we also have that $x(CK) = \sigma^g(x')(CK)$ and translation by $g$ preserves the set $CK \setminus \langle g \rangle a$, it must be the case that
        \begin{equation*}
            x\bigl(CK \setminus \langle g \rangle a\bigr) = \sigma^g(x')\bigl(CK \setminus \langle g \rangle a\bigr) = \sigma^g(x)\bigl(CK \setminus \langle g \rangle a\bigr).
        \end{equation*}
        This equality establishes that $x$ is in the repeat set $\cR\bigl((CK\setminus \langle g \rangle a); g\bigr)$ (defined in Section \ref{sect:prob}). Thus, we have shown that 
        \begin{equation} \label{eqn:spynotes}
            \cT_{C,K}(a,b,g) \subset E(P_1) \cap E(P_2) \cap \cR\bigl((CK\setminus \langle g \rangle a); g\bigr).
        \end{equation}
        
        Since $P_1$, $P_2$, and $CK \setminus \langle g \rangle a$ are all pairwise disjoint, we have that $E(P_1)$, $E(P_2)$, and $\cR(CK\setminus \langle g \rangle a; g)$ are mutually independent, and then by \eqref{eqn:spynotes} and \eqref{prob:fact2},
        \begin{align*}
            \mu_p\bigl(\cT_{C,K}(a,b,g)\bigr) &\le \mu_p\bigl(E(P_1)\cap E(P_2)\cap \cR(CK\setminus \langle g \rangle a; g)\bigr) \\
            &= \mu_p\bigl(E(P_1)\bigr)\mu_p\bigl(E(P_2)\bigr)\mu_p\bigl(\cR(CK\setminus \langle g \rangle a; g)\bigr).
        \end{align*}
        Furthermore, by \eqref{prob:repeat_prob:eq1} and Theorem \ref{prob:repeat_bounds}, we have that
        \begin{align*}
            \mu_p\bigl(\cT_{C,K}(a,b,g)\bigr) &\le \pi_{|P_1|}(p)\pi_{|P_2|}(p)\pi_2(p)^{\frac{1}{2}|CK \setminus \langle g \rangle a|} \\
            &= \pi_m(p)\pi_{\ord(g)-m}(p)\pi_2(p)^{\frac{1}{2}(|CK|-\ord(g))}.
        \end{align*}
        
        Next we distinguish several cases in terms of $m$ and $\ord(g)$. If $\ord(g) = 2$, then it must be the case that $m = 1$, since $0 < m < 2$ and $m \in \N$, and in this case,
        \begin{align*}
            \mu_p\bigl(\cT_{C,K}(a,b,g)\bigr) &\le \pi_1(p)\pi_{2-1}(p)\pi_2(p)^{\frac{|CK|}{2}-\frac{2}{2}} \\
            &= \pi_2(p)^{\frac{|CK|}{2}-1},
        \end{align*}
        since $\pi_1(p) = 1$. Next, in the case that either $m = 1$ or $\ord(g)-m = 1$, the other quantity is clearly $\ord(g)-1$ with $\ord(g) \ge 3$. Applying Lemma \ref{pvec:pi_upper}, we get
        \begin{align*}
            \mu_p\bigl(\cT_{C,K}(a,b,g)\bigr) &\le \pi_1(p)\pi_{\ord(g)-1}(p)\pi_2(p)^{\frac{|CK|}{2}-\frac{\ord(g)}{2}} \\
            &\le \pi_2(p)^{\frac{\ord(g)}{2}-\frac{1}{2}}\pi_2(p)^{\frac{|CK|}{2}-\frac{\ord(g)}{2}} \\
            &\le \pi_2(p)^{\frac{|CK|}{2}-1}.
        \end{align*}
        Lastly, in the case that both $m \ge 2$ and $\ord(g)-m \ge 2$, we also have by Lemma \ref{pvec:pi_upper} that
        \begin{align*}
            \mu_p\bigl(\cT_{C,K}(a,b,g)\bigr) &\le \pi_m(p)\pi_{\ord(g)-m}(p)\pi_2(p)^{\frac{|CK|}{2}-\frac{\ord(g)}{2}} \\
            &\le \pi_2(p)^{\frac{m}{2}}\pi_2(p)^{\frac{\ord(g)-m}{2}}\pi_2(p)^{\frac{|CK|}{2}-\frac{\ord(g)}{2}} \\
            &\le \pi_2(p)^{\frac{|CK|}{2}-1}.
        \end{align*}
        In all cases, we see that for any $a,b \in CK$ and $g \in G_C$,
        \begin{equation*}
            \mu_p\bigl(\cT_{C,K}(a,b,g)\bigr) \le \pi_2(p)^{\frac{|CK|}{2}-1}.
        \end{equation*}
        
        Finally, by applying the union bound, the previous display, and the fact that $|G_C| \leq |C| \leq |CK|$ (by Lemma \ref{neg:stab_ord}), we have
        \begin{align*}
            \mu_p(\cT_{C,K}) &\le \sum_{a,b \in CK} \sum_{g \in G_C} \mu_p\bigl(\cT_{C,K}(a,b,g)\bigr) \\
            &\le \sum_{a,b \in CK} \sum_{g \in G_C} \pi_2(p)^{\frac{|CK|}{2}-1} \\
            &= |CK|^2 |G_C| \pi_2(p)^{\frac{|CK|}{2}-1} \\
            &\le |CK|^3 \pi_2(p)^{\frac{|CK|}{2}-1}.
        \end{align*}
        
        As we have already shown that $\mu_p(\RS_{C,K} \cap \cI_{C,K}) \le \mu_p(\cT_{C,K})$, we have established the desired result.
    \end{proof}
\end{lemma}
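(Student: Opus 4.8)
The plan is to show that any pattern possessing a repeated shell yet remaining identifiable must carry an exact internal symmetry, and that such symmetric patterns are exponentially rare. The entry point is Lemma~\ref{neg:id_lem}: if $\{a,b\} \in \cL_{C,K}$ and $x \in \RS_{C,K}(a,b)$, then the swapped pattern $\varphi_{a,b}(x)$ differs from $x$ but has the same multi-set of reads. Consequently, for $x$ to lie in $\cI_{C,K}$ it is forced that $\varphi_{a,b}(x)(CK) \in \cC\bigl(x(CK)\bigr)$, i.e.\ there is some $g \in G_C$ with $\sigma^g\bigl(\varphi_{a,b}(x)\bigr)(CK) = x(CK)$. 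I would therefore introduce, for each $a,b \in CK$ and $g \in G_C$, the exceptional set $\cT_{C,K}(a,b,g)$ of patterns satisfying $x(a) \neq x(b)$ together with this translation identity, and let $\cT_{C,K}$ be their union over all admissible $a,b,g$. The reasoning above gives the containment $\RS_{C,K} \cap \cI_{C,K} \subset \cT_{C,K}$, so by monotonicity it suffices to bound $\mu_p(\cT_{C,K})$.

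The heart of the argument is a bound on a single $\cT_{C,K}(a,b,g)$ that is uniform in the parameters. The defining conditions of $\cT_{C,K}(a,b,g)$ are precisely the hypotheses of Lemma~\ref{neg:part_orbit_lem}, which supplies an integer $m \in (0,\ord(g))$ with $b = g^m a$ and places $x$ in the intersection $E(P_1) \cap E(P_2)$ of two constant-label events carried by the complementary arcs $P_1,P_2$ of the orbit $\langle g \rangle a$, where $|P_1| = m$ and $|P_2| = \ord(g)-m$. The key additional observation is that off the orbit $x$ and $\varphi_{a,b}(x)$ coincide (they differ only at $a,b$), and since $g \in G_C$ preserves $CK$ and obviously preserves $\langle g \rangle a$, it preserves the complement $CK \setminus \langle g \rangle a$; propagating $x(CK) = \sigma^g(\varphi_{a,b}(x))(CK)$ onto this complement yields $x = \sigma^g(x)$ there, so $x \in \cR\bigl((CK \setminus \langle g \rangle a); g\bigr)$. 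As $P_1$, $P_2$, and $CK \setminus \langle g \rangle a$ are pairwise disjoint, \eqref{prob:fact2} makes the three events independent; multiplying their measures by means of \eqref{prob:repeat_prob:eq1} and the upper bound of Theorem~\ref{prob:repeat_bounds} gives
\begin{equation*}
    \mu_p\bigl(\cT_{C,K}(a,b,g)\bigr) \le \pi_m(p)\,\pi_{\ord(g)-m}(p)\,\pi_2(p)^{\frac{1}{2}(|CK|-\ord(g))}.
\end{equation*}

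To conclude, I would reduce the product $\pi_m(p)\pi_{\ord(g)-m}(p)$ to a power of $\pi_2(p)$ using Lemma~\ref{pvec:pi_upper}, which calls for a short case analysis depending on whether $m$ or $\ord(g)-m$ equals $1$. Relying on $\pi_1(p) = 1$ and $\pi_2(p) \le 1$, each case (namely $\ord(g) = 2$ with $m = 1$; exactly one of $m, \ord(g)-m$ equal to $1$; and both at least $2$) yields $\pi_m(p)\pi_{\ord(g)-m}(p) \le \pi_2(p)^{\frac{1}{2}\ord(g)-1}$, hence the uniform estimate $\mu_p(\cT_{C,K}(a,b,g)) \le \pi_2(p)^{\frac{|CK|}{2}-1}$. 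A union bound over the at most $|CK|^2$ pairs $(a,b)$ and, by Lemma~\ref{neg:stab_ord}, the at most $|G_C| \le |C| \le |CK|$ choices of $g$ then produces the claimed factor $|CK|^3$. The step demanding genuine care is confirming that the internal symmetry $g$ truly forces $x$ to repeat across the \emph{entire} complement of the orbit---this is what upgrades ``$\varphi_{a,b}(x)$ is a $C$-preserving translate of $x$'' into a repeated region of size essentially $|CK|$ and hence drives the exponent $|CK|/2$---along with verifying that the boundary case $\ord(g) = 2$, $m = 1$ saturates the bound exactly at exponent $\tfrac{|CK|}{2}-1$ rather than losing a factor.
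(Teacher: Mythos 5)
Your proposal is correct and follows essentially the same route as the paper's own proof: the exceptional sets $\cT_{C,K}(a,b,g)$, the reduction via Lemmas \ref{neg:id_lem} and \ref{neg:part_orbit_lem}, the independence of $E(P_1)$, $E(P_2)$, and $\cR\bigl((CK\setminus \langle g \rangle a);g\bigr)$, the case analysis on $m$ and $\ord(g)$ via Lemma \ref{pvec:pi_upper}, and the final union bound using $|G_C| \le |C| \le |CK|$ all match. In particular, your uniform estimate $\pi_m(p)\pi_{\ord(g)-m}(p) \le \pi_2(p)^{\frac{1}{2}\ord(g)-1}$ is exactly what the paper's three cases establish, so the argument goes through as written.
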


\subsection{Estimating the probability of non-identifiability} \label{Sect:ProbNonId}

We now turn towards bounding the probability of $\RS_{C,K}$ from below. At its core, our argument involves an application of the second moment method to collections of disjoint shells. Using this argument, we show that the probability of having a repeated shell is large, which then implies that the probability of non-identifiability is large. We begin with a definition. 

\begin{defn} \label{def:DSC}
    Let $G$ be a group, and let $C, K \subset G$. Then a set $D \subset CK$ satisfies the \textit{Disjoint Shell Condition} (DSC) if for any $a \ne b \in D$, $\bar{S}_a \cap \bar{S}_b = \varnothing$. 
\end{defn}
Note that if $D$ satisfies the DSC, then for any distinct $a,b \in D$ with the same shell type, we have $\{a,b\} \in \cL_{C,K}$. 
Recall that for an arbitrary set $A \subset CK$, the set $I_A$ 
is the set of shell types of elements of $A$,  
and for $\alpha \in I_A$, 
the set 
is the set of all elements of $A$ 
with shell type $\alpha$.
\begin{lemma} \label{neg:id_prob}
    Let $G$ be a group, and let $C, K \subset G$. Suppose there is a nonempty set $D \subset CK$ satisfying the DSC such that $\forall \alpha \in I_D$, $|D_\alpha| \ge 2$. Then
    \begin{equation*}
        \mu_p(\RS_{C,K}) \ge 1 - \frac{4\sqrt{|I_D|}(1-\pi_2(p))^{-1}}{|D|\pi_2(p)^{\frac{1}{2}|K^{-1}K|}}\left[\frac{\sqrt{|I_D|}}{|D|\pi_2(p)^{\frac{1}{2}|K^{-1}K|}} + 2\right].
    \end{equation*}
    \begin{proof}
        To begin, $\forall \alpha \in I_D$, we let $S_\alpha = a^{-1}S_a$ for any $a \in D_\alpha$. Note that $S_\alpha$ is the same regardless of the choice of $a$, because for any $a,b \in D_\alpha$, we have $a^{-1}C_a = b^{-1}C_b$.
        
        Now, for any $\alpha \in I_D$ and distinct $a, b \in D_\alpha$, we define the indicator random variable $\1_{a,b} = \1_{b,a}$ for an arbitrary pattern $x \in \cA^G$ by the rule
        \begin{equation*}
            \1_{a,b}(x) = \begin{cases}
                1, & x \in \cR(S_a; ba^{-1}) \cap \left(\cA^{G} \setminus E(\{a,b\})\right) \\
                0, & \text{otherwise},
            \end{cases}
        \end{equation*}
        where $E(\{a,b\})$ denotes the event that an arbitrary pattern has the same label at $a$ and $b$. Note that by definition, $\1_{a,b}(x) = 1$ if and only if $x \in \RS_{C,K}(a,b)$.
        
        For any $\alpha \in I_D$, we define the set $D_\alpha^*$ to be the collection of two-element subsets of $D_\alpha$, and we note that $|D_\alpha^*| = \binom{|D_\alpha|}{2}$. We then define the random variable $X$ by
        \begin{equation*}
            X = \sum_{\alpha \in I_D} \sum_{\{a,b\} \in D_\alpha^*} \1_{a,b}.
        \end{equation*}
        Clearly, if $X(x) \ge 1$ for some $x \in \cA^G$, then we have some $\{a,b\} \in \cL_{C,K}$ and $x \in \RS_{C,K}(a,b)$, which means that $x \in \RS_{C,K}$. Therefore $(X \ge 1) \subset \RS_{C,K}$, and we can bound the probability of $\RS_{C,K}$ by monotonicity, i.e.,
        \begin{equation*}
            \mu_p(\RS_{C,K}) \ge \mu_p(X \ge 1) = 1 - \mu_p(X < 1).
        \end{equation*}
        Since $X$ is an integer valued random variable, the event that $X < 1$ is the same as the event that $X \le 0$. We then have by Chebyshev's Inequality that
        \begin{align} \begin{split} \label{eqn:greatscott}
            \mu_p(\RS_{C,K}) &\ge 1 - \mu_p(X \le 0) \\
            &= 1 - \mu_p\bigl(\E[X] - X \ge \E[X]\bigr) \\
            &= 1 - \mu_p\left(\E[X] - X \ge \frac{\E[X]}{\sqrt{\Var(X)}}\sqrt{\Var(X)}\right) \\
            &\ge 1 - \frac{\Var(X)}{\E[X]^2}.
            \end{split}
        \end{align}
        We now focus on finding an expression for $\Var(X)$ and $\E[X]$. First, we have that
        \begin{align*}
            \E[X] &= \sum_{\alpha \in I_D} \sum_{\{a,b\} \in D_\alpha^*} \E[\1_{a,b}] \\
            &= \sum_{\alpha \in I_D} \sum_{\{a,b\} \in D_\alpha^*} \mu_p\Bigl(\cR(S_a;ba^{-1}) \cap \bigl(\cA^G \setminus E(\{a,b\})\bigr)\Bigr).
        \end{align*}
        Note that $S_a \cup ba^{-1}S_a = S_a \cup S_b$. Since we know that $\bar{S}_a \cap \bar{S}_b = \varnothing$ (because $D$ satisfies the DSC), this union must be disjoint. Additionally, we know that $\{a,b\}$ and $S_a \cup S_b$ are disjoint, so by \eqref{prob:fact2}, we see that $\cR(S_a; ba^{-1})$ and $E(\{a,b\})$ are independent, and hence $\cR(S_a; ba^{-1})$ and $\cA^G \setminus E(\{a,b\})$ (the complement of $E(\{a,b\})$) are independent as well. Using this fact in the previous display, we obtain
        \begin{align*}
            \E[X] &= \sum_{\alpha \in I_D} \sum_{\{a,b\} \in D_\alpha^*} \mu_p\bigl(\cR(S_a;ba^{-1})\bigr)\mu_p\bigl(\cA^G \setminus E(\{a,b\})\bigr) \\
            &= \sum_{\alpha \in I_D} \sum_{\{a,b\} \in D_\alpha^*} \mu_p\bigl(\cR(S_a;ba^{-1})\bigr)\left(1 - \mu_p\bigl(E(\{a,b\})\bigr)\right).
        \end{align*}
        Since $S_a$ and $ba^{-1} = S_b$ are disjoint, Theorem \ref{prob:n_disj_repeat_prob} gives that $\mu_p\bigl(\cR(S_a;ba^{-1})\bigr) = \pi_2(p)^{|S_a|} = \pi_2(p)^{|S_\alpha|}$. Additionally, \eqref{prob:repeat_prob:eq1} gives that $\mu_p\bigl(E(\{a,b\})\bigr) = \pi_2(p)$. Substituting these expressions into the previous display yields
        \begin{align*}
            \E[X] &= \sum_{\alpha \in I_D} \sum_{\{a,b\} \in D_\alpha^*} \pi_2(p)^{|S_\alpha|}(1-\pi_2(p)) \\
            &= (1-\pi_2(p)) \sum_{\alpha \in I_D} \binom{|D_\alpha|}{2} \pi_2(p)^{|S_\alpha|}.
        \end{align*}
        Since $|D_\alpha| \ge 2$, we also have that $|D_\alpha| - 1 \ge \frac{|D_\alpha|}{2}$, which gives the inequality $\binom{|D_\alpha|}{2} = \frac{|D_\alpha|(|D_\alpha|-1)}{2} \ge \frac{|D_\alpha|^2}{4}$. Applying this inequality to the previous display gives
        \begin{equation} \label{eqn:greatexpectation}
            \E[X] \ge \frac{1-\pi_2(p)}{4} \sum_{\alpha \in I_D} |D_\alpha|^2\pi_2(p)^{|S_\alpha|}.
        \end{equation} 
        
        Now we consider $\Var(X)$, which can be written in terms of covariances,
        \begin{equation*}
            \Var(X) = \sum_{\alpha,\beta \in I_D} \sum_{\{a,b\} \in D_\alpha^*} \sum_{\{c,d\} \in D_\beta^*} \Cov(\1_{a,b},\1_{c,d}).
        \end{equation*}
        If we consider two distinct shell types $\alpha,\beta \in I_D$ and pairs $\{a,b\} \in \alpha$ and $\{c,d\}$ in $\beta$, then clearly $\{c,d\} \cap \{a,b\} = \varnothing$, because $c$ and $d$ are in a different equivalence class from $a$ and $b$. Additionally, since $D$ satisfies the DSC, the shells around $a,b,c$, and $d$ are all pairwise disjoint, which means $\1_{a,b}$ and $\1_{c,d}$ are independent. As a result, $\Cov(\1_{a,b}, \1_{c,d}) = 0$. Dropping these terms in our expression for $\Var(X)$ yields 
        \begin{equation*}
            \Var(X) = \sum_{\alpha \in I} \sum_{\{a,b\} \in D_\alpha^*} \sum_{\{c,d\} \in D_\alpha^*} \Cov(\1_{a,b},\1_{c,d}).
        \end{equation*}
        The remaining terms in this sum can be split into three cases. 
        
        The first is when $\{c,d\} = \{a,b\}$. In this case, 
        \begin{equation*}
            \Cov(\1_{a,b},\1_{a,b}) = \E[\1_{a,b}]-\E[\1_{a,b}]^2 \le \E[\1_{a,b}] = \pi_2(p)^{|S_\alpha|}(1-\pi_2(p)).
        \end{equation*}
        Note that this case clearly only occurs once in the innermost sum. Next, we have the case that $|\{a,b\} \cup \{c,d\}| = 3$, meaning one of $c$ or $d$ is exactly $a$ or $b$. Without loss of generality, suppose that $c = b$. To account for this, we estimate the number of such pairs of two-element subsets of $D_\alpha$ by $2(|D_\alpha|-2) \le 2|D_\alpha|$, because for $c$, we choose one of $a$ or $b$ (so we have 2 choices), and for $d$ we choose some other element in $D_\alpha$, of which there are $|D_\alpha|-2$. In this case, we have that
        \begin{equation*}
            \Cov(\1_{a,b},\1_{b,d}) = \E[\1_{a,b}\1_{b,d}] - \E[\1_{a,b}]\E[\1_{b,d}] \le \E[\1_{a,b}\1_{b,d}].
        \end{equation*}
        Since $\1_{a,b}$ and $\1_{b,d}$ are indicator variables, the expectation of their product is just the probability of the intersection of the events on which they are both $1$, and so 
        \begin{equation*}
            \E[\1_{a,b}\1_{b,d}] \le \mu_p\Bigl(\cR(S_a;ba^{-1})\cap\bigl(\cA^G \setminus E(\{a,b\})\bigr)\cap \cR(S_b; db^{-1})\Bigr).
        \end{equation*}
        We remark that we have left the event $\cA^G \setminus E(\{b,d\})$ out of this intersection, which gives rise to the inequality. Noting that the event $\cR(S_a;ba^{-1}) \cap \cR(S_b; db^{-1})$ denotes the event that $S_a$, $S_b$, and $S_d$ all have the same labels, with $ba^{-1}$ mapping $S_a$ to $S_b$ and $da^{-1}$ mapping $S_a$ to $S_d$, we can equivalently represent this event as $\cR(S_a; ba^{-1}, da^{-1})$. Substituting, we get 
        \begin{equation*}
            \E[\1_{a,b}\1_{b,d}] \le \mu_p\Bigl(\cR(S_a; ba^{-1}, da^{-1}) \cap \bigl(\cA^G \setminus E(\{a,b\})\bigr)\Bigr).
        \end{equation*}
        Since we have that $S_a \cup S_b \cup S_d$ is disjoint from $\{a,b\}$, we have by \eqref{prob:fact2} that $\cR(S_a; ba^{-1}, da^{-1})$ and $E(\{a,b\})$ are independent, which means $\cR(S_a; ba^{-1}, da^{-1})$ and $\cA^G \setminus E(\{a,b\})$ are also independent. By Theorem \ref{prob:n_disj_repeat_prob}, $\mu_p\bigl(\cR(S_a;ba^{-1},da^{-1})\bigr) = \pi_3(p)^{|S_a|} = \pi_3(p)^{|S_\alpha|}$, and we know $\mu_p\bigl(E(\{a,b\})\bigr) = \pi_2(p)$, which yields 
        \begin{align*}
            \E[\1_{a,b}\1_{b,d}] &\le \mu_p\bigl(\cR(S_a; ba^{-1}, da^{-1})\bigr)\Bigl(1 - \mu_p\bigl(E(\{a,b\})\bigr)\Bigr) \\
            &= \pi_3(p)^{|S_\alpha|}(1-\pi_2(p)).
        \end{align*}
        Thus, by applying Lemma \ref{pvec:pi_upper}, we obtain that
        \begin{equation*}
            \Cov(\1_{a,b},\1_{b,d}) \le \pi_3(p)^{|S_\alpha|}(1-\pi_2(p)) \le \pi_2(p)^{\frac{3}{2}|S_\alpha|}(1-\pi_2(p)).
        \end{equation*}
        The third and final case concerns the pairs of disjoint two element subsets of $D_\alpha$. In this case, each of the four shells are disjoint, and thus $\1_{a,b}$ and $\1_{c,d}$ are independent and all of these terms are zero. 
        
        By applying our estimates in all three cases, we obtain that 
        \begin{align} \begin{split} \label{eqn:greatvar}
            \Var(X) &\le \sum_{\alpha \in I_D} \sum_{\{a,b\} \in D_\alpha^*} \pi_2(p)^{|S_\alpha|}(1-\pi_2(p))\left[1 + 2|D_\alpha|\pi_2(p)^{\frac{1}{2}|S_\alpha|}\right] \\
            &= (1-\pi_2(p)) \sum_{\alpha \in I_D} \binom{|D_\alpha|}{2}\pi_2(p)^{|S_\alpha|} \left[1 + 2|D_\alpha|\pi_2(p)^{\frac{1}{2}|S_\alpha|}\right] \\
            &\le \frac{(1-\pi_2(p))}{4} \sum_{\alpha \in I_D} |D_\alpha|^2\pi_2(p)^{|S_\alpha|}\left[1 + 2|D_\alpha|\pi_2(p)^{\frac{1}{2}|S_\alpha|}\right].
            \end{split}
        \end{align}
        
        By combining \eqref{eqn:greatscott}, \eqref{eqn:greatexpectation}, and \eqref{eqn:greatvar}, we get
        \begin{equation*}
            \mu_p(\RS_{C,K}) \ge 1 - \frac{4}{(1-\pi_2(p))}\sum_{\alpha \in I_D} \frac{|D_\alpha|^2 \pi_2(p)^{|S_\alpha|}\left[1 + 2|D_\alpha|\pi_2(p)^{\frac{1}{2}|S_\alpha|}\right]}{\left[\sum_{\beta \in I_D} |D_\beta|^2 \pi_2(p)^{|S_\beta|}\right]^2}.
        \end{equation*}
        Let us now simplify the expression on the right-hand side. To compress the notation, let 
        \begin{equation*}
            P_\alpha = |D_\alpha|\pi_2(p)^{\frac{|S_\alpha|}{2}}.
        \end{equation*}
        Then we see that
        \begin{align} \begin{split} \label{eqn:closer}
            \mu_p(\RS_{C,K}) &\ge 1 - \frac{4}{(1-\pi_2(p))}\frac{\sum_{\alpha \in I_D} P_\alpha^2}{\left[\sum_{\beta \in I_D} P_\beta^2\right]^2} - \frac{8}{(1-\pi_2(p))} \frac{\sum_{\alpha \in I_D} P_\alpha^3}{\left[\sum_{\beta \in I_D} P_\beta^2\right]^2} \\
            &\stackrel{(i)}{\ge} 1 - \frac{4(1-\pi_2(p))^{-1}}{\sum_{\alpha \in I_D} P_\alpha^2} - \frac{8}{(1-\pi_2(p))} \frac{\left[\sum_{\alpha \in I_D} P_\alpha^2\right]^\frac{3}{2}}{\left[\sum_{\beta \in I_D} P_\beta^2\right]^2} \\
            &= 1 - \frac{4(1-\pi_2(p))^{-1}}{\sum_{\alpha \in I_D} P_\alpha^2} - \frac{8(1-\pi_2(p))^{-1}}{\left[\sum_{\alpha \in I_D} P_\alpha^2\right]^\frac{1}{2}}.
            \end{split}
        \end{align}
        Inequality $(i)$ is possible as a result of the monotonicity of the $\ell^p$ norms: if we consider the vector $P = (P_\alpha)_{\alpha \in I_D}$, then
        \begin{equation*}
            \sum_{\alpha \in I_D} P_\alpha^3 = (\lVert P \rVert_3)^3 \le (\lVert P \rVert_2)^3 = \left(\sum_{\alpha \in I_D} P_\alpha^2\right)^{\frac{3}{2}}.
        \end{equation*}
        We now focus on bounding from below the sum that appears in this expression. Given any $a \in CK$, we have that $\bar{S}_a \subset aK^{-1}K$, so $\forall \alpha \in I_D$, $|S_\alpha| \le |K^{-1}K|$. Thus
        \begin{align} \begin{split} \label{eqn:closest}
            \sum_{\alpha \in I_D} P_\alpha^2 &= \sum_{\alpha \in I_D} |D_\alpha|^2\pi_2(p)^{|S_\alpha|} \\
            &\ge \sum_{\alpha \in I_D} |D_\alpha|^2\pi_2(p)^{|K^{-1}K|} \\
            &= \pi_2(p)^{|K^{-1}K|} \sum_{\alpha \in I_D} |D_\alpha|^2 \\
            &\stackrel{(ii)}{\ge} \pi_2(p)^{|K^{-1}K|} \frac{1}{|I_D|} \left[\sum_{\alpha \in I_D} |D_\alpha| \right]^2 \\
            &\stackrel{(iii)}{=} \pi_2(p)^{|K^{-1}K|} \frac{|D|^2}{|I_D|}.
            \end{split}
        \end{align}
        Equality $(iii)$ is due to the fact that $\sum_{\alpha \in I_D} |D_\alpha| = |D|$. Inequality $(ii)$ follows from the Cauchy-Schwarz inequality:
        \begin{equation*}
            \left(\sum_{\alpha \in I_D} 1\cdot|D_\alpha|\right)^2 \le \sum_{\alpha \in I_D} 1^2 \sum_{\alpha \in I_D} |D_\alpha|^2 = |I_D| \sum_{\alpha \in I_D} |D_\alpha|^2
        \end{equation*}
        Finally, applying the estimate \eqref{eqn:closest} in \eqref{eqn:closer} and then simplifying gives
        \begin{align*}
            \mu_p(\RS_{C,K}) &\ge 1 - \frac{4|I_D|(1-\pi_2(p))^{-1}}{|D|^2\pi_2(p)^{|K^{-1}K|}} - \frac{8\sqrt{|I_D|}(1-\pi_2(p)^{-1})}{|D|\pi_2(p)^{\frac{1}{2}|K^{-1}K|}} \\
            &= 1 - \frac{4\sqrt{|I_D|}(1-\pi_2(p))^{-1}}{|D|\pi_2(p)^{\frac{1}{2}|K^{-1}K|}}\left[\frac{\sqrt{|I_D|}}{|D|\pi_2(p)^{\frac{1}{2}|K^{-1}K|}} + 2\right],
        \end{align*}
        which concludes the proof.
    \end{proof}
\end{lemma}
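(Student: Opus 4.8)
The plan is to apply the second moment method to a random variable that counts repeated shells, using the Disjoint Shell Condition (DSC) to control the dependencies. Concretely, for each shell type $\alpha \in I_D$ and each two-element subset $\{a,b\}$ of $D_\alpha$, I would introduce the indicator $\1_{a,b}$ of the event $\cR(S_a;ba^{-1}) \cap (\cA^G \setminus E(\{a,b\}))$, which by the definitions is exactly the event $\RS_{C,K}(a,b)$ that there is a shell repeat between $a$ and $b$ with distinct central labels. Summing these over all valid pairs gives $X = \sum_{\alpha \in I_D} \sum_{\{a,b\}} \1_{a,b}$, and since $X \ge 1$ forces membership in $\RS_{C,K}$, Chebyshev's inequality reduces the problem to showing that $\Var(X)/\E[X]^2$ is small, i.e.\ to lower-bounding $\E[X]$ and upper-bounding $\Var(X)$.

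For the expectation, the key point is that because $D$ satisfies the DSC, the shells $S_a$ and $S_b = ba^{-1}S_a$ are disjoint from each other and from $\{a,b\}$; hence $\cR(S_a;ba^{-1})$ and $E(\{a,b\})$ are independent, and Theorem \ref{prob:n_disj_repeat_prob} gives the exact value $\mu_p(\cR(S_a;ba^{-1})) = \pi_2(p)^{|S_\alpha|}$, while \eqref{prob:repeat_prob:eq1} gives $\mu_p(E(\{a,b\})) = \pi_2(p)$. This yields $\E[\1_{a,b}] = \pi_2(p)^{|S_\alpha|}(1-\pi_2(p))$, and summing, together with the elementary bound $\binom{|D_\alpha|}{2} \ge |D_\alpha|^2/4$ (which uses the hypothesis $|D_\alpha| \ge 2$), produces a clean lower bound of the shape $\E[X] \ge \tfrac{1-\pi_2(p)}{4}\sum_{\alpha} |D_\alpha|^2 \pi_2(p)^{|S_\alpha|}$.

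The main obstacle is the variance. Writing $\Var(X)$ as a sum of covariances $\Cov(\1_{a,b},\1_{c,d})$, the DSC immediately annihilates all cross terms between different shell types (disjoint shells give independence), so only pairs drawn from the same $D_\alpha$ survive. Within a fixed $\alpha$ there are three cases: identical pairs, disjoint pairs, and pairs sharing exactly one element. Disjoint pairs contribute zero covariance by independence, and the diagonal term is bounded by $\E[\1_{a,b}]$. The delicate case is the overlapping one, say $c=b$: here I would recognize $\cR(S_a;ba^{-1}) \cap \cR(S_b;db^{-1})$ as the threefold repeat event $\cR(S_a;ba^{-1},da^{-1})$, whose probability is $\pi_3(p)^{|S_\alpha|}$ by Theorem \ref{prob:n_disj_repeat_prob} (again using the DSC to ensure $S_a,S_b,S_d$ are mutually disjoint). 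Bounding $\pi_3(p) \le \pi_2(p)^{3/2}$ via Lemma \ref{pvec:pi_upper} and counting at most $2|D_\alpha|$ such overlapping pairs gives the required control, leading to $\Var(X) \le \tfrac{1-\pi_2(p)}{4}\sum_{\alpha} |D_\alpha|^2 \pi_2(p)^{|S_\alpha|}\bigl[1 + 2|D_\alpha|\pi_2(p)^{\frac{1}{2}|S_\alpha|}\bigr]$.

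Finally I would assemble the Chebyshev bound. Abbreviating $P_\alpha = |D_\alpha|\pi_2(p)^{|S_\alpha|/2}$, the ratio $\Var(X)/\E[X]^2$ splits into a term proportional to $(\sum_\alpha P_\alpha^2)^{-1}$ and a term involving $\sum_\alpha P_\alpha^3$; the monotonicity of $\ell^p$ norms bounds $\sum_\alpha P_\alpha^3 \le (\sum_\alpha P_\alpha^2)^{3/2}$, reducing everything to the single quantity $\sum_\alpha P_\alpha^2$. To conclude, I would lower bound this quantity using $|S_\alpha| \le |K^{-1}K|$ (every shell lies in a translate of $K^{-1}K$) together with the Cauchy--Schwarz inequality $\sum_\alpha |D_\alpha|^2 \ge |D|^2/|I_D|$, which gives $\sum_\alpha P_\alpha^2 \ge \pi_2(p)^{|K^{-1}K|}|D|^2/|I_D|$. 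Substituting this estimate back into the Chebyshev bound and simplifying then yields precisely the claimed inequality.
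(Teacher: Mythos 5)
Your proposal is correct and follows essentially the same route as the paper's own proof: the same indicator variables $\1_{a,b}$ and counting variable $X$, the same Chebyshev second-moment reduction, the same exact evaluation of $\E[X]$ via Theorem \ref{prob:n_disj_repeat_prob} and \eqref{prob:repeat_prob:eq1}, the same three-case covariance analysis within each shell type (with $\pi_3(p)\le\pi_2(p)^{3/2}$ from Lemma \ref{pvec:pi_upper}), and the same final assembly using $\ell^p$-norm monotonicity, $|S_\alpha|\le|K^{-1}K|$, and Cauchy--Schwarz. The only caveat, which you inherit from the paper itself, is that bounding the variance by replacing $\binom{|D_\alpha|}{2}$ with $|D_\alpha|^2/4$ goes the wrong way when $|D_\alpha|\ge 3$ (there $\binom{n}{2}\ge n^2/4$); using $\binom{n}{2}\le n^2/2$ instead merely changes the constants $4$ and $8$ in the final bound to $8$ and $16$, which is immaterial for the asymptotic application in Theorem \ref{neg:regime}.
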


\subsection{Proof of Theorem \ref{neg:regime}} \label{Sect:ProofNegative}

With this probability bound on $\RS_{C,K}$, we can now prove Theorem \ref{neg:regime}. We note that the theorem does not require defining a sequence of sets satisfying the DSC (Definition \ref{def:DSC}), as that could be cumbersome for an arbitrary choice of $G$, $C$, and $K$. It turns out, however, that we can construct a sufficiently large set satisfying the DSC out of a set of elements with potentially intersecting shells using the set $B_n$. Recall the statement of the theorem.

\begin{repthm}{neg:regime}
    Let $\cA$, $p$, $\{G_n\}$, $\{C_n\}$, $\{K_n\}$, and $\{\mu_p^n\}$ be as in Section \ref{Sect:Intro} such that $\lim_n |C_nK_n| = \infty$ \eqref{assump:CK_to_inf}. Suppose there exists a sequence $\{B_n\}$ such that $B_n \subset C_nK_n$,
    \begin{itemize}[itemsep=0.4\baselineskip]
        \item[\eqref{assump:B_few_shells}] $\lim_n \frac{\ln|I_{B_n}|}{\ln|C_nK_n|} = 0$, and
        \item[\eqref{assump:B_large}] $\lim_n \frac{\ln|B_n|}{\ln|C_nK_n|} = 1$.
    \end{itemize}
    Further, suppose
    \begin{itemize}
        \item[\eqref{assump:shell_size_bound}] $\exists \epsilon >0 \text{ s.t. } \forall n \gg 0 \quad |K^{-1}_nK_n| \leq (1-\epsilon)\frac2{H_2(p)}\ln |C_nK_n|$.
    \end{itemize}
    Then non-identifiability holds a.a.s.
    \begin{proof}
        First, we take $n$ large enough that $|C_nK_n| \ge 2$. We begin by inductively constructing a sequence of sets $\{D_n\}$ that each satisfies the DCS with respect to $G_n$, $C_n$, and $K_n$. For the first element $d_1$ in our set $D_n$, take some arbitrary element of $B_n \subset C_nK_n$. For the next element in $D_n$, we cannot choose some other arbitrary element from $B_n$, because it might be the case that its shell intersects with the shell around $d_1$ (so $D_n$ would not satisfy the DSC). 
        By Lemma \ref{pos:trans_overlap} however, we know that for any $c \in C_nK_n$, we have $d_1K_n^{-1}K_n \cap cK_n^{-1}K_n \ne \varnothing \iff c \in d_1K_n^{-1}K_n(K_n^{-1}K_n)^{-1} = d_1(K_n^{-1}K_n)^2$. As a result, if we choose some element in $B_n \setminus d_1(K_n^{-1}K_n)^2$, we are guaranteed that its shell is disjoint from $d_1$. As long as we continue to remove the at most $|(K_n^{-1}K_n)^2|$ elements in $B_n$ which may have a shell intersecting with the latest element added to $D_n$, the sets we construct will always satisfy the DSC. Since we can repeat this process at least
        \begin{equation*}
            \left \lceil \frac{|B_n|}{|(K_n^{-1}K_n)^2|} \right \rceil
        \end{equation*}
        times, we can always construct $D_n$ satisfying the DSC and such that
        \begin{equation} \label{eqn:bumble}
            |D_n| \ge \frac{|B_n|}{|(K_n^{-1}K_n)^2|} \ge \frac{|B_n|}{|K_n^{-1}K_n|^2}.
        \end{equation}
        
        Now let
        \begin{equation*}
            H = (1 - \epsilon)\frac{2}{H_2(p)},
        \end{equation*}
        and note that $|K^{-1}_nK_n| \le H\ln(|C_nK_n|)$ for all large $n$ by \eqref{assump:shell_size_bound}. Then by applying this estimate in \eqref{eqn:bumble}, we see that
        \begin{equation} \label{eqn:bulldog}
            |D_n| \ge \frac{|B_n|}{H^2\ln(|C_nK_n|)^2}.
        \end{equation}
        
        Choose any $\delta > 0$ such that $\delta < \min(1,\epsilon)$. By \eqref{assump:B_large}, we have
        \begin{equation*}
            \lim_{n \to \infty} \frac{\ln(|B_n|)}{\ln(|C_nK_n|)} = 1,
        \end{equation*}
        and therefore, for all large enough $n$, we have $|B_n| \geq |C_n K_n|^{1-\delta/2}$.
        
        Combining this estimate with \eqref{eqn:bulldog}, we have
        \begin{equation*}
            |D_n| \ge \frac{|C_nK_n|^{1-\delta/2}}{H^2\ln(|C_nK_n|)^2}.
        \end{equation*}
        Furthermore, by \eqref{assump:B_few_shells}, we have
        \begin{equation*}
            \lim_{n \to \infty} \frac{\ln(|I_{B_n}|)}{\ln(|C_nK_n|)} = 0,
        \end{equation*}
        and therefore for all large enough $n$, we get $|I_{B_n}| \leq |C_n K_n|^{\delta/2}$. 
        
        Since $D_n \subset B_n$, it must be the case that 
        \begin{equation} \label{eqn:juniper}
            |I_{D_n}| \le |I_{B_n}| \leq |C_n K_n|^{\delta/2}.
        \end{equation}
        Now consider the set $U \subset D_n$ consisting of elements $a \in D_n$ that are the unique representatives of their respective shell types within $D_n$, i.e., for all $b \in D_n$, if $b \neq a$, then the shell type of $a$ is distinct from the shell type of $b$. Note that there is an injection from $U$ into $I_{D_n}$, given by mapping $a \in U$ to its shell type. Thus, $|U| \leq |I_{D_n}|$, and then by \eqref{eqn:juniper}, $|U| \leq |C_n K_n|^{\delta/2}$.  
        
        By replacing $D_n$ with the set $D_n \setminus U$ if necessary, we can guarantee that there are at least $2$ elements with any shell type represented in $D_n$. To summarize, for all large enough $n$, we have a set $D_n$ satisfying the DSC, having at least two elements of any shell type represented in $D_n$, and satisfying
        \begin{align} \begin{split} \label{eqn:vase}
            |D_n| &\ge \frac{|C_nK_n|^{1-\delta/2}}{H^2\ln(|C_nK_n|)^2} - |C_nK_n|^{\delta/2} \\
            &= |C_nK_n|^{\delta/2}\left[\frac{|C_nK_n|^{1-\delta}}{H^2\ln(|C_nK_n|)^2} - 1\right] \\
            &= |C_nK_n|^{\delta/2}\frac{|C_nK_n|^{1-\delta} - H^2\ln(|C_nK_n|)^2}{H^2\ln(|C_nK_n|)^2}.
            \end{split}
        \end{align}
        We also want $D_n$ to be nonempty, and the bound above may be negative. However, for large enough $n$, this bound is positive (in fact it tends to infinity). 
        
        We have thus verified all of the hypotheses of Lemma \ref{neg:id_prob} for all large enough $n$. Then by an application of this lemma, for all large enough $n$ we have
        \begin{equation} \label{eqn:grayandgreen}
            \mu_p^n(\RS_{C_n,K_n}) \ge 1 - \frac{4\sqrt{|I_{D_n}|}(1-\pi_2(p))^{-1}}{|D_n|\pi_2(p)^{\frac{1}{2}|K_n^{-1}K_n|}}\left[\frac{\sqrt{|I_{D_n}|}}{|D_n|\pi_2(p)^{\frac{1}{2}|K_n^{-1}K_n|}} + 2\right].
        \end{equation}
        Focusing for the moment on $\frac{\sqrt{|I_{D_n}|}}{|D_n|}$, we note that  $\sqrt{|I_{D_n}|} \le |I_{D_n}| \le |C_nK_n|^{\delta/2}$ by \eqref{eqn:juniper}, and then by \eqref{eqn:vase} we get 
        \begin{equation*}
            \frac{\sqrt{|I_{D_n}|}}{|D_n|} \le \frac{H^2\ln(|C_nK_n|)^2}{|C_nK_n|^{1 - \delta} - H^2\ln(|C_nK_n|)^2}.
        \end{equation*}
        
        Fixing any $\alpha >1$ and taking $n$ sufficiently large, the denominator in the above display is at least $|C_n K_n|^{1-\delta}/\alpha$, and therefore
        \begin{equation*}
            \frac{\sqrt{|I_{D_n}|}}{|D_n|} \le \frac{\alpha H^2\ln(|C_nK_n|)^2}{|C_nK_n|^{1 - \delta}} = \frac{\alpha H^2\ln(|C_nK_n|)^2|C_nK_n|^\delta}{|C_nK_n|}.
        \end{equation*}
        
        Combining this estimate with \eqref{eqn:grayandgreen} (and omitting the dependence on $n$ for the moment to aid readability), we find
        \begin{multline} \label{eqn:harper}
            \mu_p( \RS_{C,K} ) \ge \\ 
              1 - \frac{4\alpha H^2\ln(|CK|)^2|CK|^\delta(1-\pi_2(p))^{-1}}{|CK|\pi_2(p)^{\frac{1}{2}|K^{-1}K|}} \left[\frac{\alpha H^2\ln(|CK|)^2|CK|^\delta}{|CK|\pi_2(p)^{\frac{1}{2}|K^{-1}K|}} + 2 \right].
        \end{multline}
        Furthermore, by \eqref{assump:shell_size_bound}, we see that
        \begin{align*}
            |CK|\pi_2(p)^{\frac{1}{2}|K^{-1}K|} &\ge |CK|\pi_2(p)^{\frac{1}{2}(1-\epsilon)\frac{2}{H_2(p)}\ln(|CK|)} \\
            &= e^{\ln(|CK|)}e^{-(1-\epsilon)\ln(|CK|)} = |CK|^\epsilon.
        \end{align*}
        Combining the two previous displays gives
        \begin{equation*}
            \mu_p(\RS_{C,K}) \ge 1 - \frac{4\alpha H^2\ln(|CK|)^2(1-\pi_2(p))^{-1}}{|CK|^{\epsilon-\delta}} \left[\frac{\alpha H^2\ln(|CK|)^2}{|CK|^{\epsilon-\delta}} + 2\right].
        \end{equation*}
        By applying this estimate for all large enough $n$ and using that $\delta < \epsilon$ and $\mu_p^n(\RS_{C_n,K_n}) \leq 1$, we may let $n$ tend to infinity and conclude that
        \begin{equation*}
            \lim_{n \to \infty} \mu^n_p(\RS_{C_n,K_n}) = 1.
        \end{equation*}
        Then by Lemma \ref{neg:insct_bound}, we have 
        \begin{align*}
            \lim_{n \to \infty} \mu^n_p(\RS_{C_n,K_n} &\cap \cI_{C_n,K_n}) \le \lim_{n \to \infty} |C_nK_n|^3\pi_2(p)^{\frac{|C_nK_n|}{2}-1} \\
            &= \lim_{n \to \infty} \exp\left(3\ln(|C_nK_n|)-\left(\frac{|C_nK_n|}{2}-1\right)H_2(p)\right) \\
            &= \exp\bigl(H_2(p)\bigr) \lim_{n \to \infty} \exp\left(3\ln(|C_nK_n|) - \frac{H_2(p)}{2}|C_nK_n|\right) = 0.
        \end{align*}
        
        To conclude the argument, note that
        \begin{align*}
            0 \leq \mu^n_p(\cI_{C_n,K_n}) &= 1 - \mu^n_p(\cA^G\setminus\cI_{C_n,K_n}) \\
            &\le 1 - \mu^n_p(\RS_{C_n,K_n}\setminus\cI_{C_n,K_n}) \\
            &= 1 - \mu^n_p(\RS_{C_n,K_n}) + \mu^n_p(\RS_{C_n,K_n} \cap \cI_{C_n,K_n}).
        \end{align*}
        Then by the previous three displays, we see that
        \begin{align*}
            0 & \leq \lim_{n \to \infty} \mu^n_p(\cI_{C_n,K_n}) \\
            &\le 1 - \lim_{n \to \infty} \mu^n_p(\RS_{C_n,K_n}) + \lim_{n \to \infty} \mu^n_p(\RS_{C_n,K_n}\cap \cI_{C_n,K_n}) \\
            &= 1 - 1 + 0  = 0,
        \end{align*}
        which establishes that non-identifiability holds a.a.s.
    \end{proof}
\end{repthm}

\section{Examples} \label{sect:ex}

In this section we apply Theorems \ref{pos:regime} and \ref{neg:regime} to some families of examples and relate our results to the relevant previous work. 

\subsection{Hypercubes in $\Z^d$ with sparse center sets} \label{Sect:LatticesBasic}

Our first example is the main one analyzed in previous works: the $d$-dimensional integer lattice with both the center set and the read shape being hypercubes. In the $1$-dimensional case, patterns are just strings of characters, which is precisely how DNA sequences are modeled. The shotgun reconstruction problem on DNA sequences was analyzed in \cite{motahari}, with the identification problem on DNA sequences being studied in \cite{arratia}. In all other dimensions, this problem was analyzed by Mossel and Ross \cite{mossel}. 

We consider $\cA$, $p$, $d$, $\{r_n\}$, $\{m_n\}$, $\{\ell_n\}$, $\{R_n\}$, $\{G_n\}$, $\{C_n\}$, and $\{K_n\}$ as in Section \ref{sect:ex1}. Recall Corollary \ref{ex1}.

\begin{repcor}{ex1}
    If there exists some $\epsilon > 0$ such that for large enough $n$,
    \begin{equation*}
        r_n^d \ge (1 + \epsilon) \frac{2d}{H_2(p)}\ln(R_n),
    \end{equation*}
    then identifiability occurs a.a.s. On the other hand, if there exists some $\epsilon > 0$ such that for large enough $n$,
    \begin{equation*}
        r_n^d \le (1 - \epsilon) \frac{d}{2^{d-1}H_2(p)}\ln(R_n),
    \end{equation*}
    then non-identifiability occurs a.a.s.
\end{repcor}


\begin{proof}[Proof of Corollary \ref{ex1}, Identifiability]
    Assume the first display of the corollary. Consider two elements $c_1, c_2 \in C_n$ such that $c_1 = c_2 (\ell_n \cdot e_i)$ or $c_1 = c_2(\ell_n \cdot e_i^{-1})$ for some unit vector $e_i \in G_n$. The reads at these locations will overlap in the form of a $d$-dimensional rectangular prism, where each side has length $r_n$ except in the dimension $i$, where the length will be $r_n - \ell_n$ (which is positive for large enough $n$). Define the sequence $\{\bF_n\}$ by letting $\bF_n$ be the collection of all $d$ such $r_n^{d-1}(r_n-\ell_n)$ prisms (one for the choice of the shorter side being along each of the $d$ dimensions) for all large enough $n$. Let us now verify the hypotheses of Theorem \ref{pos:regime}.
    
    First, $|\bF_n| = d$ for large enough $n$, and therefore
    \begin{equation*}
        \lim_{n \to \infty} \frac{\ln(|\bF_n|)}{\ln(|C_nK_n|)} = 0.
    \end{equation*}
    This verifies \eqref{assump:few_oshapes}. Next, our choice of $\bF_n$ makes the overlap graph $\cO(G_n,C_n,K_n,\bF_n)$ connected, since it contains a square lattice as a subgraph, which is trivially connected. Thus \eqref{assump:overlap_conn} is satisfied. Finally, by assumption, we have that $r_n$ tends to infinity because $R_n$ does, and
    \begin{equation*}
        r_n^{d-1}(r_n-\ell_n) = \frac{r_n-\ell_n}{r_n}r_n^d \ge \frac{r_n-\ell_n}{r_n}(1 + \epsilon)\frac{2d}{H_2(p)} \ln(R_n). 
    \end{equation*}
    We then have that $\frac{r_n-\ell_n}{r_n}$ tends to 1 from below, since $\ell_n /r_n \to 0$  by assumption. As a result, for large enough $n$, we have
    \begin{equation*}
        \frac{r_n-\ell_n}{r_n} \ge 1 - \frac{\epsilon}{\epsilon+2} > 0,
    \end{equation*}
    and therefore
    \begin{align*}
        r_n^{d-1}(r_n-\ell_n) &\ge \left(1-\frac{\epsilon}{\epsilon+2}\right)(1+\epsilon)\frac{2d}{H_2(p)}\ln(R_n) \\
        &= \left(1 + \frac{\epsilon}{\epsilon+2}\right)\frac{2}{H_2(p)}\ln(R_n^d).
    \end{align*}
    Since $|C_nK_n| = R_n^d$ and $\forall F \in \bF_n$, $|F| = r_n^{d-1}(r_n-\ell_n)$, this inequality guarantees that \eqref{assump:oshape_size_bound} holds. Then by Theorem \ref{pos:regime}, identifiability occurs a.a.s.
\end{proof}


\begin{proof}[Proof of Corollary \ref{ex1}, Non-Identifiability]
    Assume the second display of the corollary. First, note that $|K_n^{-1}K_n| = (2r_n-1)^d \le (2r_n)^d$. Let us construct a sequence $B_n \subset C_nK_n$ and verify the hypotheses of Theorem \ref{neg:regime}. The restriction on $r_n^d$ imposed by the assumed inequality gives that $r_n \le L_0 \sqrt[d]{\ln(R_n)}$ for some constant $L_0$, and therefore $r_n/R_n \to 0$. As a result, we must have that $R_n \ge 2r_n - 1$ for all large enough $n$, so let $B_n = [r_n-1, R_n - r_n]^d$ (which is non-empty). This makes it so that $bK^{-1}_nK_n \subset C_nK_n$ for any $b \in B_n$, so none of the shells around any of the elements in $B_n$ \dquote{hang off the side}, which could make the shell around such an element abnormal in comparison (especially since $C_n$ is a regular lattice).
    
    Due to the regular nature of $C_n$, it is possible to place a hypercube with side length $\ell_n$ at each of the elements of $C_n$, and it will ultimately tile nearly all of $C_nK_n$ without overlap. $B_n$ is entirely contained within this tiling however. As a result, it is possible to uniquely represent each element $b \in B_n$ by some $c \in C_n$ and $a \in [0, \ell_n-1]^d \subset G_n$ where $b = ca$. We remark that we are using multiplicative notation here for consistency with the rest of the paper, although for this particular group, additive notation is more standard (e.g., $b = c + a$). Additionally, if some other $b' \in B_n$ is represented as $b' = c'a$ for some $c' \in C_n$ and the same $a \in [0, \ell_n-1]^d$, then $b$ and $b'$ have the same shell type, because they are both positioned identically relative to the underlying grid in $C_n$. As a result, there are at most $\ell_n^d$ different possible shell types in $B_n$, making $|I_{B_n}| = \ell_n^d$. 
    
    Since $\ell_n$ grows more slowly than $r_n$, for large enough $n$, we have $\ell_n < r_n$. Additionally, we note that the assumed inequality gives that $r_n \leq L_0 \ln(R_n)$ for a constant $L_0$ and all large enough $n$. By applying these inequalities, we get 
    \begin{equation*}
        \ln(\ell_n^d) \le \ln(r_n^d) \le d \ln(L_0\ln(R_n)) = d \ln(L_0) + d \ln( \ln(R_n)), 
    \end{equation*}
    and as a result, for all large enough $n$, we have
    \begin{align*}
        0 \leq \frac{\ln(|I_{B_n}|)}{\ln(|C_nK_n|)} &=  \frac{\ln(\ell_n^d)}{\ln(R_n^d)} \\
        &\leq  \frac{\ln(L_0)+ \ln(\ln(R_n))}{\ln(R_n)}.
    \end{align*}
    Taking the limit as $n$ tends to infinity, we obtain that 
    \begin{equation*}
        \lim_{n \to \infty} \frac{\ln(|I_{B_n}|)}{\ln(|C_nK_n|)} = 0,
    \end{equation*}
    which verifies \eqref{assump:B_few_shells}.
    
    Now recall that $|B_n| = (R_n - 2r_n + 2)^d$ and $|C_nK_n| = R_n^d$. Then with the fact that $r_n \ge 1$, we have
    \begin{align*}
        \lim_{n \to \infty} \frac{\ln(|B_n|)}{\ln(|C_nK_n|)} &= \lim_{n \to \infty} \frac{d\ln(R_n-2r_n+2)}{d\ln(R_n)} \\
        &= \lim_{n \to \infty} \frac{\ln(R_n)}{\ln(R_n)} + \frac{\ln\left(1 - 2\frac{r_n}{R_n} + \frac{2}{R_n}\right)}{\ln(R_n)} \\
        &= 1 + \lim_{n \to \infty} \frac{\ln\left(1 - 2\frac{r_n}{R_n} + \frac{2}{R_n}\right)}{\ln(R_n)} \\
        & = 1,
    \end{align*}
    where we have used the hypothesis that $r_n \leq  L_0\ln(R_n)$ in the last equality. This verifies \eqref{assump:B_large}.
    
    Additionally, by hypothesis, we have that
    \begin{equation*}
        r_n^d \le (1-\epsilon)\frac{d}{2^{d-1}H_2(p)}\ln(R_n),
    \end{equation*}
    and thus
    \begin{equation*}
        (2r_n-1)^d \le (1-\epsilon)\frac{2}{H_2(p)}\ln(R_n^d).
    \end{equation*}
    Since $|K_n^{-1}K_n| = (2r_n-1)^d$ and $|C_nK_n| = R_n^d$, we have verified condition \eqref{assump:shell_size_bound}. Then an application of Theorem \ref{neg:regime} gives that non-identifiability occurs a.a.s.
\end{proof}

Previous papers have analyzed the specific case when $\ell_n = 1$: when $d = 1$, it was treated in \cite{arratia, motahari}, and when $d \ge 2$ it was addressed in \cite{mossel}. Looking specifically at the case when $\ell_n = 1$ and $d = 1$, Corollary \ref{ex1} gives identifiability a.a.s. when 
\begin{equation*}
    r_n \ge (1 + \epsilon)\frac{2}{H_2(p)}\ln(R_n), 
\end{equation*}
and it gives non-identifiability a.a.s. when 
\begin{equation*}
    r_n \le (1 - \epsilon)\frac{1}{H_2(p)}\ln(R_n).
\end{equation*}
We note that this identifiability result coincides precisely with the identifiability threshold established in previous work \cite{arratia,motahari} when $d = 1$ and $\ell = 1$. On the other hand, our non-identifiability bound is smaller than the existing results for $d=1$ and $\ell=1$ by a factor of 2 \cite{motahari}. 
The main reason for this discrepancy is that the previous result 
involves a detailed analysis of DNA sequences, in which case one can exploit the specific structure of $\Z$ to consider more general blocking configurations and obtain the better bound. 

In the case where $d$ is arbitrary and $\ell_n = 1$, Mossel and Ross \cite{mossel} have previous results similar to ours. In the case that the probability distribution $p$ is uniform, we exactly recover the results of \cite[Theorem 1.1]{mossel}. This can be seen by assuming $|\cA| = q$, which gives $H_2(p) = \ln(q)$ for uniform $p$. Additionally, when $p$ is not uniform, we recover precisely the same identifiability results. We note that our non-identifiability result is more general, in the sense that it covers the case of non-uniform $p$ while the previous work does not.

\subsection{General read shapes in $\Z^d$}

Let $\cA$, $p$, $d$, $\{G_n\}$, $\{C_n\}$, and $\{K_n\}$ be as defined in Section \ref{sect:ex2}. Recall the following corollary.

\begin{repcor}{ex2}
    If
    \begin{equation*}
        \lim_{n \to \infty} \frac{|\opint_1K_n|}{|K_n|} = 1,
    \end{equation*}
    and for large enough $n$,
    \begin{equation*}
        |K_n| \ge \frac{4d}{H_2(p)}\ln(n),
    \end{equation*}
    then identifiability occurs a.a.s. On the other hand, if
    \begin{equation*}
        \lim_{n \to \infty} \frac{\diam(K_n)}{n} = 0,
    \end{equation*}
    and for some $\epsilon > 0$ and large enough $n$,
    \begin{equation*}
        |K_n^{-1}K_n| \le (1 - \epsilon) \frac{2d}{H_2(p)} \ln(n),
    \end{equation*}
    then non-identifiability occurs a.a.s.
\end{repcor}

\begin{proof}[Proof of Corollary \ref{ex2}, Identifiability]
    To establish the identifiability statement, we assume that the first two displays of the corollary hold. We will verify the conditions of Theorem \ref{pos:regime}. First, by definition of $C_n$, we have that \eqref{assump:CK_to_inf} holds. 
    
    Next, define the sequence $\{F_n\}$ as $F_n = \opint_1 K_n$, and define the sequence $\{\bF_n\}$ as $\bF_n = \{F_n\}$ (the set containing $F_n$). Clearly, $|\bF_n| = 1$, and thus \eqref{assump:few_oshapes} holds.
    
    Now consider the sequence $\{\cO_n\}$ of overlap graphs $\cO_n = \cO(G_n,C_n,K_n,\bF_n)$. Consider any two elements $c_1, c_2 \in C_n$ that differ by a unit vector in $\Z^d$. We assume that $c_2 = c_1e_i$ for the standard basis 
    vector $e_i$ (such that $\| e_i \|_2 = 1$) without loss of generality (as the argument is symmetric for any other unit vector). We remark that while we are using multiplicative notation for the group, this actually means $c_2 = c_1 + e_i$ in the standard additive notation. We then note that
    \begin{equation*}
        c_1K_n \cap c_2K_n = c_1K_n \cap c_1e_iK_n = c_1\bigl(K_n \cap e_iK_n\bigr).
    \end{equation*}
    Since 
    $e_i$ is a standard basis vector, we have that $\opint_1K_n \subset K_n \cap e_iK_n$, and therefore
    \begin{equation*}
        c_1F_n = c_1\bigl(\opint_1 K_n\bigr) \subset c_1 \bigl(K_n \cap e_iK_n\bigr) = c_1K_n \cap c_2K_n.
    \end{equation*}
    Hence $(c_1,c_2) \in E(\cO_n)$. Since this is true for any two centers that differ by a unit vector, a regular lattice appears as a subgraph of $\cO_n$, and thus $\cO_n$ is connected, which verifies \eqref{assump:overlap_conn}.
    
    Lastly, let us show that for some $\epsilon > 0$ and large enough $n$,
    \begin{equation*}
        |F_n| \ge (1 + \epsilon) \frac{2}{H_2(p)}\ln(|C_nK_n|).
    \end{equation*}
    To accomplish this, let $\epsilon \in (0, \sfrac{1}{3})$, and define the constant $H = 2 (1+\epsilon) / H_2(p)$. First, we have that $|C_nK_n| \le |C_n||K_n|$, which means
    \begin{align} \begin{split} \label{eqn:walking}
        H\ln(|C_nK_n|) &\le H\ln(|C_n||K_n|) \\
        &= H\ln(|C_n|) + H\ln(|K_n|) \\
        &= Hd\ln(n) + H\ln(|K_n|),
        \end{split}
    \end{align}
    where we have used that $|C_n| = n^d$. Then by assumption, for all large enough $n$,
    \begin{equation*}
        |\opint_1 K_n| \ge (1-\epsilon)|K_n|.
    \end{equation*}
    
    Also, since $\ln(|K_n|)/|K_n| \to 0$, it must be the case that for large enough $n$,
    \begin{equation*}
        H\ln(|K_n|) \le \frac{1-3\epsilon}{2}|K_n|.
    \end{equation*}
    Applying this bound in \eqref{eqn:walking} gives that for all large enough $n$,
    \begin{align} \begin{split} \label{eqn:running}
        H\ln(|C_nK_n|) &\le Hd\ln(n) + H\ln(K_n) \\
        &\le Hd\ln(n) + \frac{1-3\epsilon}{2}|K_n|.
        \end{split}
    \end{align}
    Next, by assumption, we have for large enough $n$ that
    \begin{equation*}
        Hd\ln(n) \le \frac{1+\epsilon}{2}|K_n|.
    \end{equation*}
    Then by applying this bound in \eqref{eqn:running}, we see that for all large enough $n$, 
    \begin{align*}
        H\ln(|C_nK_n|) &\le Hd\ln(n) + \frac{1-3\epsilon}{2}|K_n| \\
        &\le \frac{1+\epsilon}{2}|K_n| + \frac{1-3\epsilon}{2}|K_n| \\
        &= (1-\epsilon)|K_n| \\
        &\le \opint_1 |K_n| \\
        &= |F_n|,
    \end{align*}
    which verifies \eqref{assump:oshape_size_bound}. We have shown that the hypotheses of Theorem \ref{pos:regime} are satisfied. By applying the theorem, we conclude that identifiability occurs a.a.s. 
\end{proof}

Next, we give the proof of the non-identifiability conditions for Corollary \ref{ex2}. For this, we remark that for any finite set $F$, we have that $\exists g \in G_n$ such that $F \subset g[0,\diam(F)]^d$, meaning that some axis-aligned hypercube with side length $\diam(F)$ contains $F$.

\begin{proof} [Proof of Corollary \ref{ex2}, Non-Identifiability]
    Assume the last two displays of the corollary hold. Let us verify the hypotheses of Theorem \ref{neg:regime}. Clearly \eqref{assump:CK_to_inf} holds. Next, for some $g \in G_n$, we have that $K_n \subset g[0, \diam(K_n)]^d$, and therefore $C_nK_n \subset g[0, n - 1 + \diam(K_n)]^d$ and $|C_nK_n| \le (n + \diam(K_n))^d$. Let $k_0 \in K_n$ be arbitrary, and define
    \begin{equation*}
        B_n = k_0[\diam(K_n), n-1-\diam(K_n)]^d.
    \end{equation*}
    Since $G_n$ is Abelian, $k_0 \in K_n$ and $[\diam(K_n), n-1-\diam(K_n)]^d \subset [0,n-1]^d = C_n$, we see that $B_n \subset C_nK_n$. By assumption, we have that $n$ grows faster than $\diam(K_n)$, and thus for large enough $n$, we have $n \ge 2\diam(K_n) + 1$. Hence $B_n$ is non-empty. 
    
    Let us now show that the shell around any element of $B_n$ is the maximal shell. To that end, let $b \in B_n$. By definition of $B_n$, there exists $l \in [\diam(K_n), n-1-\diam(K_n)]^d$ such that $b = k_0l$, and by definition of $g$, there exists $h_0 \in [0, \diam(K_n)]^d$ such that $k_0 = gh_0$. Putting these together, we get $b = k_0 l = g h_0 l$. Now let $k^{-1} \in K_n^{-1}$. Since $K_n^{-1} \subset g^{-1}[-\diam(K_n), 0]^d$, we have $k^{-1} = g^{-1}h_1^{-1}$ for some $h_1 \in [0, \diam(K_n)]^d$. Then since $G_n$ is Abelian, we have
    \begin{equation*}
        bk^{-1} = gh_0lg^{-1}h_1^{-1} = h_0lh_1^{-1}.
    \end{equation*}
    Next observe that
    \begin{equation*}
        [0, \diam(K_n)]^d[\diam(K_n), n-1-\diam(K_n)]^d[-\diam(K_n), 0]^d = [0,n-1]^d,
    \end{equation*}
    and therefore $bk^{-1} = h_0h_1^{-1}l \in [0,n-1]^d = C_n$. Since $k^{-1} \in K_n^{-1}$ was arbitrary, we have $bK^{-1}_n \subset C_n$, and therefore the shell around $b$ is the maximal shell around $b$. Thus there is only one shell type represented in $B_n$, i.e., $|I_{B_n}| = 1$. Then clearly \eqref{assump:B_few_shells} holds. 
    
    Next, we have that $|B_n| = (n - 2\diam(K_n))^d$ and $|C_nK_n| \le (n + \diam(K_n))^d$, which gives
    \begin{align*}
        1 \geq \frac{\ln(|B_n|)}{\ln(|C_nK_n|)} &\ge 
        \frac{\ln\bigl((n-2\diam(K_n))^d\bigr)}{\ln\bigl((n+\diam(K_n))^d\bigr)} \\
        &= 
        \frac{\ln(n) + \ln\left(1 - \frac{2\diam(K_n)}{n}\right)}{\ln(n) + \ln\left(1 + \frac{\diam(K_n)}{n}\right)}.
    \end{align*}
    By our assumption that $\diam(K_n) / n \to 0$, we may take the limit as $n$ tends to infinity and obtain that \eqref{assump:B_large} holds.
    
    Finally, by assumption, there exists $\epsilon > 0$ such that for large enough $n$, we have
    \begin{align*}
        |K_n^{-1}K_n| & \le (1 - \epsilon) \frac{2d}{H_2(p)}\ln(n) \\
        &= (1 - \epsilon) \frac{2}{H_2(p)}\ln(|C_n|) \\
        &\le (1 - \epsilon) \frac{2}{H_2(p)}\ln(|C_nK_n|).
    \end{align*}
    This verifies \eqref{assump:shell_size_bound}. Then by Theorem \ref{neg:regime}, non-identifiability occurs a.a.s.
\end{proof}

\subsection{Finitely generated groups}

For our last example, we fix an infinite but finitely generated group $G$ with finite symmetric generating set $T$. Let $T_i$, $\gamma(n)$, $\cA$, $p$, $\{r_n\}$, $\{R_n\}$, $\{G_n\}$, $\{C_n\}$, and $\{K_n\}$ be defined as in Section \ref{sect:ex3}. Corollary \ref{ex3} is stated as follows.

\begin{repcor}{ex3}
    If there exists some $\epsilon > 0$ such that for large enough $n$,
    \begin{equation*}
        \gamma(r_n-1) \ge (1 + \epsilon) \frac{2}{H_2(p)}\ln(\gamma(R_n))
    \end{equation*}
    then identifiability occurs a.a.s. If there exists some $\epsilon > 0$ such that for large enough $n$,
    \begin{equation*}
        \gamma(2r_n) \le (1-\epsilon) \frac{2}{H_2(p)}\ln(\gamma(R_n)),
    \end{equation*}
    then non-identifiability occurs a.a.s.
\end{repcor}

To prove the identifiability statement of the corollary, we require the \textit{$T$-interior} of a set $F \subset G$, denoted by $\opint_T F$ and defined as
\begin{equation*}
    \opint_T F = \bigcap_{t \in T} F \cap tF.
\end{equation*}
Additionally, it is straightforward to show that for any $n,m \in \N$, we have $T_{n+m} = T_nT_m$, and therefore $\gamma(n+m) \le \gamma(n)\gamma(m)$, which is known as \textit{submultiplicativity}. 
We now prove our identifiability result for this example.

\begin{proof} [Proof of Corollary \ref{ex3}, Identifiability]
    Assume the first display of the corollary. Let us verify the hypotheses of Theorem \ref{pos:regime}. To begin, we define the sequence $\{\bF_n\}$ as
    \begin{equation*}
        \bF_n = \{\opint_T K_n\}.
    \end{equation*}
    Clearly then, for any elements $c, ct \in C_n$ for any $t \in T$, the intersection of the reads will be $cK_n \cap ctK_n = c(K_n \cap tK_n)$, which is a translate of $K_n \cap tK_n$ that contains $\opint_T K_n$ by definition, so $c,ct$ will be connected by an edge in the overlap graph. This means that the Cayley graph $\Gamma = \Gamma(G,T)$ is a subgraph of the overlap graph $\cO_n = \cO(G_n,C_n,K_n,\bF_n)$, and since $\Gamma$ is connected (because $T$ is a symmetric generating set),  $\cO_n$ must be connected as well. Hence \eqref{assump:overlap_conn} holds. Also, we have that $|\bF_n| = 1$ and thus \eqref{assump:few_oshapes} clearly holds.
    
    Our goal now is to bound the size of $\opint_T K_n = \opint_T T_{r_n}$, which we will do by showing $T_{r_n-1} \subset \opint_T T_{r_n}$. Let $h \in T_{r_n-1}$ and $t \in T$, so we clearly also have that $h \in T_{r_n}$. We also have that $t^{-1}h \in T_{r_n}$, because $t^{-1} \in T$, so $t^{-1}h$ can be represented with at most $r_n$ generators. This gives $h = t(t^{-1}h) \in tT_{r_n}$, which means $h \in T_{r_n} \cap tT_{r_n}$ for all $t$, and $h \in \opint_T T_{r_n}$. Therefore, $T_{r_n-1} \subset \opint_T T_{r_n}$, and
    \begin{equation*}
        |\opint_T K_n| \ge |T_{r_n-1}| = \gamma(r_n-1).
    \end{equation*}
    Then by assumption, we have that there exists an $\epsilon > 0$ such that for large enough $n$, $\forall F \in \bF_n$,
    \begin{equation*}
        |F| = |\opint_T K_n| \ge \gamma(r_n-1) \ge (1+\epsilon)\frac{2}{H_2(p)}\ln(\gamma(R_n)) = (1+\epsilon)\frac{2}{H_2(p)}\ln(|C_nK_n|).
    \end{equation*}
    This shows that \eqref{assump:oshape_size_bound} holds. Then by Theorem \ref{pos:regime}, we conclude that identifiability occurs a.a.s.
\end{proof}

We now give our prove of the non-identifiability result for this example.

\begin{proof} [Proof of Corollary \ref{ex3}, Non-Identifiability]
    Assume the second display of the corollary. Let us verify the hypotheses of Theorem \ref{neg:regime}. First, we will establish that $\forall c > 0$, for all large enough $n$, we have $R_n > cr_n$. To see why this is the case, suppose for contradiction that $\exists c > 0$ such that $R_{n_k} \le cr_{n_k}$ for a subsequence $\{n_k\}$ tending to infinity. Then since $\gamma$ is monotonically increasing, we have
    
    \begin{equation*}
        \gamma\left(\frac{2}{c}R_{n_k}\right) \le (1 - \epsilon) \frac{2}{H_2(p)}\ln(\gamma(R_{n_k})).
    \end{equation*}
    If $\frac{2}{c} \ge 1$, then by the fact $\gamma$ is monotonically increasing, we have
    \begin{equation*}
        \gamma(R_{n_k}) \le \gamma\left(\frac{2}{c}R_{n_k}\right) \le (1 - \epsilon) \frac{2}{H_2(p)}\ln(\gamma(R_{n_k})),
    \end{equation*}
    which is clearly not true for large $k$, since $\gamma$ grows faster than $\ln \circ \gamma$. On the other hand, if $\frac{2}{c} < 1$, then let $x_{n_k} = \frac{2}{c}R_{n_k}$ and $m = \lceil c \rceil$. By the submultiplicativity of $\gamma$, note that we have $\gamma(nx) \le \gamma(x)^n$ for all $n,x \in \N$. By the previous display and this fact, we see that
    \begin{align*}
        \gamma\left(x_{n_k}\right) &\le (1 - \epsilon) \frac{2}{H_2(p)}\ln(\gamma(R_{n_k})) \\
        &\le (1 - \epsilon) \frac{2}{H_2(p)} \ln\left(\gamma\left(m\frac{2}{c}R_{n_k}\right)\right) \\
        &\le (1 - \epsilon) \frac{2}{H_2(p)} \ln(\gamma(x_{n_k})^m) \\
        &= (1-\epsilon) \frac{2m}{H_2(p)} \ln(\gamma(x_{n_k})).
    \end{align*}
    However, this inequality cannot hold for all large $k$, since $\gamma$ grows faster than $\ln \circ \gamma$. Thus, we have shown that $\forall c > 0$, for all large $n$, we must have $R_n > cr_n$, and hence $\lim_n r_n/R_n = 0$. We can then define the sequence $\{B_n\}$ as $B_n = T_{R_n-2r_n}$, which is non-empty for large enough $n$ (as $R_n - 2r_n$ must tend to infinity). 
    
    We start by counting the number of shell types represented in $B_n$, denoted $|I_{B_n}|$. Let $b \in B_n$, and consider $bK^{-1}_n = bT^{-1}_{r_n}$. Since $T_{r_n}$ is symmetric, we have that $T^{-1}_{r_n} = T_{r_n}$, which means $bK^{-1}_n = bT_{r_n}$. Let $h \in bT_{r_n}$, so for some $t \in T_{r_n}$, $h = bt$. Since $b \in T_{R_n-2r_n}$, we must have that $h \in T_{R_n-2r_n+r_n} = T_{R_n-r_n} = C_n$, so we have that $bK^{-1}_n \subset C_n$. Therefore, the shell around any element in $B_n$ is the maximal shell around that element, and there is a single shell type represented in $B_n$. This clearly yields \eqref{assump:B_few_shells}.
    
    Next, since $|B_n| = \gamma(R_n-2r_n)$, we have
    \begin{align*}
        1 \geq \frac{\ln(|B_n|)}{\ln(|C_nK_n|)} &=  \frac{\ln(\gamma(R_n-2r_n))}{\ln(\gamma(R_n))} \\
        &=  \frac{\ln(\gamma(R_n-2r_n)\gamma(2r_n))-\ln(\gamma(2r_n))}{\ln(\gamma(R_n))} \\
        & \ge  \frac{\ln(\gamma(R_n))-\ln(\gamma(2r_n))}{\ln(\gamma(R_n))} \\
        &= 1 -  \frac{\ln(\gamma(2r_n))}{\ln(\gamma(R_n))},
    \end{align*}
    where the inequality follows from the fact that $\gamma(n+m) \le \gamma(n)\gamma(m)$. By assumption, we have that $\gamma(2r_n) = L_0 \ln(\gamma(R_n))$ for some constant $L_0$, and therefore $\ln(\gamma(2r_n)) \leq L_1 + \ln( \ln(\gamma(R_n)))$ for some constant $L_1$. Applying this inequality in the previous display and taking the limit as $n$ tends to infinity gives 
    \begin{equation*}
        \lim_{n \to \infty} \frac{ \ln(|B_n|)}{\ln(|C_n K_n|)} = 1, 
    \end{equation*}
    and thus \eqref{assump:B_large} holds.
    
    Lastly, we have that
    \begin{equation*}
        |K^{-1}_nK_n| = |T^{-1}_{r_n}T_{r_n}| = |T_{r_n}T_{r_n}| = |T_{2r_n}| = \gamma(2r_n),
    \end{equation*}
    and then by assumption we conclude that
    \begin{equation*}
        |K^{-1}_nK_n| = \gamma(2r_n) \le (1 - \epsilon) \frac{2}{H_2(p)}\ln(\gamma(R_n)) = (1-\epsilon)\frac{2}{H_2(p)}\ln(|C_nK_n|),
    \end{equation*}
    which verifies \eqref{assump:shell_size_bound}. Then by Theorem \ref{neg:regime}, non-identifiability occurs a.a.s.
\end{proof}

\section{Final remarks and future work} \label{sect:conc}
 
 In this work we consider shotgun identification problems in the context of discrete groups. We present both positive and negative results and illustrate these results with several families of examples. In future work, we think the following general framework for shotgun identification problems may be interesting to consider.

\begin{prob}[Abstract Shotgun Identification]
    Let $S$ be a finite set, and let $\cA$ be a finite alphabet. Here $S$ serves as a base set to be labeled by symbols from $\cA$. We call such assignments \textit{patterns}, and denote the set of all such patterns by $\cA^S$. These patterns can be viewed as functions from $S$ to $\cA$. Let $\Sigma \subset \cP(S)$ be a collection of locations at which a pattern $w \in \cA^S$ will be observed. Let $\Omega$ be some \textit{observation space}, and for every $T \in \Sigma$ define an \textit{observation function} $\alpha_T:\cA^T \to \Omega$. With $\MS(X)$ representing the multi-sets of elements of a set $X$, the \textit{read operator} $\cR_\Sigma:\cA^S \to \MS(\Omega)$ is defined as
    \begin{equation*}
        \cR_\Sigma(w) = \bigl\{\alpha_T\bigl(x|_T\bigr) : T \in \Sigma\bigr\},
    \end{equation*}
    whose elements are called the \textit{reads} of a pattern $w \in \cA^S$. In other words, the read operator provides the observations of $w$ at each element $T \in \Sigma$. Note that the location of the observation in $w$ may not be known, but certain local information can remain intact based on the nature of the observation functions. Let $P \le \Sym(S)$ be a subgroup of the symmetric group on $S$. These permutations define a natural notion of equivalence on $\cA^S$, namely that two patterns $w$ and $x$ are equivalent if there is some $p \in P$ such that $x \circ p = w$. Letting $\cR_\Sigma^{-1}$ denote the set inverse of $\cR_\Sigma$, a pattern $x \in \cA^S$ is said to be \textit{identifiable} if
    \begin{equation*}
        \cR_\Sigma^{-1}\bigl(\cR_\Sigma(w)\bigr) = \{x \in \cA^S : \exists p \in P \tst x \circ p = w\}.
    \end{equation*}
    In other words, a pattern $w \in \cA^S$ is identifiable if given $S$, $P$, $\Sigma$, $\cA$, $\Omega$, $\{\alpha_T : T \in \Sigma \}$, and $\cR_\Sigma(w)$, it is possible to uniquely identify the original pattern in $\cA^S$ which produced the multi-set $\cR_\Sigma(w)$ up to a permutation $p \in P$. The main question in the shotgun identification problem is then, \textit{what makes a pattern identifiable?} Furthermore, if a probability measure is defined on $\cA^S$, then one may ask, \textit{how likely is it for a random pattern to be identifiable?}
\end{prob}

Our work gives insights into these questions in the case of groups, giving sufficient conditions for identifiability a.a.s. and sufficient conditions for non-identifiability a.a.s. However, there are many ways that this work can be extended, such as studying identifiability of structures other than groups or even tackling the shotgun identification problem at the level of generality stated here. In the same direction, since the set $\Sigma$ is arbitrary, the reads need not have the same shape, so it may be interesting to analyze the case where there are multiple read shapes ($K_1$, $K_2$, etc.).

On the probabilistic side of the problem, it may be interesting to analyze \dquote{noisy reads}, in which the observations may contain some noise. Although the presence of noisy reads clearly adds some complexity to the problem, we expect that some of the basic tools of this paper should still be useful. Our probabilistic analysis focuses on product measures on the group $G$, under which the symbols are chosen in an i.i.d. manner. Future work could involve analyzing different probability spaces on $\cA^G$, potentially adding dependence between the labels of two elements. While many of the combinatorial results of this paper would still apply, little to none of the probabilistic results are likely to be applicable.

The critical phenomenon observed in the analysis DNA sequence reconstruction by Motahari et al. \cite{motahari} leads to a natural question: \textit{does this critical phenomenon exist for shotgun identification problems more generally?} For shotgun identification problems on graphs, there are some results and conjectures in this direction \cite{mossel}. The work presented here suggests that such critical phenomena may exist for shotgun identification problems on groups as well. We believe it would be interesting to analyze the critical phenomenon in greater detail. In particular, is there a type of phase transition for shotgun identification problems in general, and if so, is $\lambda_c = \frac{2}{H_2(p)}$ the correct threshold? 

Lastly, let us mention that in the shotgun identification problems considered here, one has knowledge of the center set $C$ at the time of reconstruction. However, it may be interesting to study different observational paradigms. In particular, it might be interesting to consider the case in which the center set consists of some number of locations drawn at random from some sample distribution. 

\section*{Acknowledgements}

All authors gratefully acknowledge the support of the National Science Foundation via grant DMS 1847144. Additionally, the authors thank Kirsten Gilmore, Daniel Johnson, and Greg Wolcott for many helpful conversations about shotgun sequencing.

\bibliographystyle{abbrv}
\bibliography{refs}

\end{document}